\def\rr{{\mathbb R}}
\def\rn{{{\rr}^n}}
\def\zz{{\mathbb Z}}
\def\cc{{\mathbb C}}
\def\cm{{\mathcal M}}
\def\nn{{\mathbb N}}
\def\cx{{\mathcal X}}
\def\fz{\infty}
\def\az{\alpha}
\def\supp{{\mathop\mathrm{\,supp\,}}}
\def\dist{{\mathop\mathrm{\,dist\,}}}
\def\loc{{\mathop\mathrm{\,loc\,}}}
\def\lz{\lambda}
\def\dz{\delta}
\def\ez{\epsilon}
\def\kz{\kappa}
\def\bz{\beta}
\def\fai{\varphi}
\def\gz{{\gamma}}
\def\boz{{\Omega}}
\def\sz{\sigma}
\def\wz{\widetilde}
\def\noz{\nonumber}
\def\hs{\hspace{0.3cm}}
\def\ls{\lesssim}
\def\gs{\gtrsim}
\def\lo{{L^1(\mu)}}
\def\wlo{{L^{1,\,\fz}(\mu)}}
\def\lt{{L^2(\mu)}}
\def\lq{{L^q(\mu)}}
\def\lp{{L^p(\mu)}}
\def\wlp{{L^{p,\,\fz}(\mu)}}
\newcommand{\pair}[2]{\langle #1,#2 \rangle}
\newcommand{\Norm}[2]{\|#1\|_{#2}}
\newcommand{\prob}[0]{\mathbb{P}}
\newcommand{\eps}[0]{\varepsilon}
\newcommand\Z{\mathbb{Z}}
\newcommand{\ud}[0]{\,\mathrm{d}}
\newcommand{\Exp}[0]{\mathbb{E}}
\newcommand{\ave}[1]{\langle #1\rangle}
\newcommand{\abs}[1]{|#1|}
\def\dsum{\displaystyle\sum}
\def\dint{\displaystyle\int}
\def\dfrac{\displaystyle\frac}
\def\r{\right}
\def\lf{\left}
\newtheorem{thm}{Theorem}[section]
\newtheorem{lem}{Lemma}[section]
\newtheorem{prop}{Proposition}[section]
\newtheorem{rem}{Remark}[section]
\newtheorem{cor}{Corollary}[section]
\newtheorem{defn}{Definition}[section]
\numberwithin{equation}{section}
\begin{document}

\arraycolsep=1pt

\title{{\vspace{-5cm}\small\hfill\bf Canad. J. Math., to appear}\\
\vspace{4.5cm}\Large\bf  Boundedness of Calder\'on-Zygmund
Operators on Non-homogeneous Metric Measure Spaces
\footnotetext{\hspace{-0.35cm} 2010 {\it Mathematics Subject
Classification}. {Primary 42B20; Secondary 42B25, 30L99.}
\endgraf{\it Key words and phrases.} Upper doubling,
geometrical doubling, dominating function, weak type $(1,1)$
estimate, Calder\'on-Zygmund operator, maximal operator.
\endgraf
The first author is supported by the Academy of Finland
(Grant Nos. 130166, 133264, 218148).
The third author is supported by National
Natural Science Foundation (Grant No. 10871025) of China
and Program for Changjiang Scholars and Innovative
Research Team in University of China.}}
\author{Tuomas Hyt\"onen, Suile Liu,
Dachun Yang\footnote{Corresponding author}\ \ and Dongyong Yang}
\date{ }
\maketitle

\begin{center}
\begin{minipage}{13.5cm}\small
{\noindent{\bf Abstract.}  {\small Let $({\mathcal X}, d, \mu)$ be a
separable metric measure space satisfying the known upper
doubling condition, the geometrical doubling condition and the
non-atomic condition that $\mu(\{x\})=0$ for all $x\in{\mathcal X}$.
In this paper, we show that the boundedness of a Calder\'on-Zygmund
operator $T$ on $L^2(\mu)$ is equivalent to that of $T$ on
$L^p(\mu)$ for some $p\in (1, \infty)$, and that of $T$ from $L^1(\mu)$
to $L^{1,\,\infty}(\mu).$ As an application, we prove that if $T$ is a
Calder\'on-Zygmund operator bounded on $L^2(\mu)$,
then its maximal operator is bounded on $L^p(\mu)$
for all $p\in (1, \infty)$ and from
the space of all complex-valued Borel measures on
${\mathcal X}$ to $L^{1,\,\infty}(\mu)$.
All these results generalize the corresponding results of Nazarov et al.
on metric spaces with
measures satisfying the so-called polynomial growth condition.}}
\end{minipage}
\end{center}

\section{Introduction}\label{s1}

\hskip\parindent
The classical theory of singular integrals of Calder\'on-Zygmund type started
with the study of convolution operators on the Euclidean space
associated with singular kernels and has been well developed into a
large branch of analysis
on metric spaces. One of the most interesting cases is the
``space of homogeneous type" in the sense of Coifman and Weiss \cite{cw71, cw77}.
Recall that a metric space $(\cx, d)$ equipped with a nonnegative Borel measure
$\mu$ is called a {\it space of homogeneous type} if $(\cx, d,
\mu)$ satisfies the following {\it measure doubling condition} that
there exists a positive constant $C_\mu$ such that for any ball
$B(x,r)\equiv \{y\in\cx:\,\, d(x, y)< r\}$ with $x\in\cx$ and $r\in(0, \fz)$,
\begin{equation}\label{1.1}
\mu(B(x, 2r))\le C_\mu \mu(B(x,r)).
\end{equation}
The measure doubling condition \eqref{1.1} was considered the cornerstone of
any extension to abstract frameworks of the theory of singular integrals.
However, recently, many results on the classical
Calder\'on-Zygmund theory have been proved still valid if the
measure doubling condition is replaced by a less demanding condition; see,
for example,  \cite{ntv2,t01, t01b, ntv03,t03,mm, b1} and the references therein.

In particular, let $\kz\in(0, \fz)$, $\cx$ be a separable metric space
endowed with a metric $d$ and a nonnegative
``$\kz$ dimensional" Borel measure $\mu$ in the sense that there exists a positive
constant $C_0$ such that for all $x\in\cx$ and $r\in(0, \fz)$,
\begin{equation}\label{1.2}
\mu(B(x,r))\le C_0r^\kz.
\end{equation}
Such a measure need not satisfy the doubling condition \eqref{1.1}.
In \cite{ntv2}, Nazarov, Treil and Volberg showed that if $T$ is a
Calder\'on-Zygmund operator bounded on $\lt$, then $T$ is bounded on
$\lp$ for all $p\in (1, \fz)$ and from $\lo$ to $\wlo$, and the
corresponding maximal operator $T^\sharp$ is also bounded on $\lp$
for any $p\in(1, \fz)$ and from the {\it space $\mathscr{M}(\cx)$}
of all complex-valued Borel measures on $\cx$ to $\wlo$; moreover,
Nazarov et al. \cite{ntv2} also proved that if $T$ is a
Calder\'on-Zygmund operator bounded from $\lo$ to $\wlo$, then $T$
is also bounded on $\lt$.

Notice that measures satisfying the polynomial growth condition
\eqref{1.2} are only different, not more general than measures
satisfying \eqref{1.1}. Thus, the Calder\'on-Zygmund theory with
non-doubling measures is not in all respects a generalization of the
corresponding theory of spaces of homogeneous type. In \cite{h10},
Hyt\"onen introduced a new class of metric measure spaces satisfying
the so-called upper doubling condition and the geometrical doubling
condition (see also Definitions \ref{d1.1} and  \ref{d1.2} below),
and a notion of the space of regularized $\mathop\mathrm{BMO}$.
This new class of metric measure spaces is a simultaneous
generalization of the spaces
of homogeneous type and metric spaces with power bounded measures.
Later, Hyt\"onen and Martikainen \cite{hm} further established a
version of $T(b)$ theorem for Calder\'on-Zygmund operators in such spaces.

Let $(\cx, d, \mu)$ be a separable metric space which satisfies the
upper doubling condition, the geometrical doubling condition and the
{\it non-atomic condition} that $\mu(\{x\})=0$ for all $x\in\cx$.
The goal of this paper is to  generalize the corresponding results
of Nazarov et al. in \cite{ntv2}. Precisely, in this paper, we show that
the boundedness of a Calder\'on-Zygmund operator $T$ on $L^2(\mu)$ is equivalent to
that of $T$ on $L^p(\mu)$ for some $p\in (1, \infty)$, and
that of $T$ from $L^1(\mu)$ to $L^{1,\,\infty}(\mu).$
As an application, we prove that if $T$ is a
Calder\'on-Zygmund operator bounded on $L^2(\mu)$,
then its maximal operator is bounded on $L^p(\mu)$
for all $p\in (1, \infty)$ and from
the space of all complex-valued Borel measures on
${\mathcal X}$ to $L^{1,\,\infty}(\mu)$.

To state our main results, we first
recall some necessary notions and notation.
We begin with the definition of the upper doubling spaces in \cite{h10}.

\begin{defn}\label{d1.1}\rm
A metric measure space $(\cx, d, \mu)$ is said to be {\it upper
doubling} if $\mu$ is a Borel measure on $\cx$ and there exists a
{\it dominating function} $\lz:\,\, \cx\times(0, \fz)\to (0, \fz)$
and a positive constant $C_\lz$ such that for each $x\in\cx$,
$r\to\lz(x, r)$ is non-decreasing, and for all $x\in\cx$ and
$r\in(0, \fz)$,
\begin{equation}\label{1.3}
\mu(B(x, r))\le\lz(x, r)\le C_\lz\lz(x, r/2).
\end{equation}
\end{defn}

\begin{rem}\label{r1.1}\rm
(i) Obviously, a space of homogeneous type is a special case of upper
doubling spaces, where one can take the dominating function $\lz(x,
r)\equiv \mu(B(x,r))$. On the other hand,  a metric space $(\cx, d, \mu)$
satisfying the  polynomial growth condition \eqref{1.2} (in particular,
$(\cx, d, \mu)\equiv (\rn, |\cdot|, \mu)$ with $\mu$ satisfying \eqref{1.2} for some
$\kz\in(0, n]$)  is also
an upper doubling measure space if we take $\lz(x, r)\equiv C_0r^\kz$.

(ii) Let $(\cx, d, \mu)$ be an upper doubling space and $\lz$ a
dominating function on $\cx\times(0, \fz)$ as in Definition
\ref{d1.1}. It was showed in \cite{hyy} that there exists
another dominating function $\wz\lz$
such that for all $x$, $y\in\cx$ with $d(x, y)\le r$,
\begin{equation}\label{1.4}
\wz\lz(x, r)\le\wz C\wz\lz(y, r).
\end{equation}
 Thus, in this paper, we {\it always assume} that $\lz$ satisfies \eqref{1.4}.
\end{rem}

We now recall the notion of geometrically doubling spaces introduced in \cite{h10}.

\begin{defn}\label{d1.2}\rm
A metric space $(\cx, d)$ is called {\it geometrically doubling}
if there exists some $N_0\in\nn\equiv\{1, 2, \cdots\}$ such that for
any ball $B(x, r)\subseteq\cx$, there exists a finite ball covering
$\{B(x_i, r/2)\}_i$ of $B(x, r)$ such that the cardinality of this
covering is at most $N_0$.
\end{defn}

\begin{rem}\label{r1.2}\rm
Let $(\cx, d)$ be a metric space. In \cite[Lemma 2.3]{h10},
Hyt\"onen showed that the following statements are mutually
equivalent:
\begin{itemize}
  \item [(i)] $(\cx, d)$ is geometrically doubling.
  \item [(ii)] For any $\ez\in (0, 1)$ and any ball
  $B(x, r)\subseteq\cx$, there exists a finite ball
  covering $\{B(x_i, \ez r)\}_i$ of $B(x, r)$ such that
  the cardinality of this covering is at most $N_0\ez^{-n}$, where
  and in what follows, $N_0$ is as in Definition \ref{d1.2} and
  $n\equiv\log_2N_0$.
  \item [(iii)] For every $\ez\in (0, 1)$, any ball
  $B(x, r)\subseteq\cx$ can contain at most
  $N_0\ez^{-n}$ centers $\{x_i\}_i$ of disjoint
  balls with radius $\ez r$.
  \item [(iv)]  There exists $M\in\nn$ such that any ball
  $B(x, r)\subseteq\cx$ can contain at most
  $M$ centers $\{x_i\}_i$ of disjoint
  balls $\{B(x_i, r/4)\}_{i=1}^M$.
\end{itemize}
\end{rem}

Now we recall the notions of standard kernels and corresponding
Calder\'on-Zygmund operators in the current setting from \cite{hm}.
Let $\mathscr{M}(\cx)$ be the {\it space of all complex-valued Borel
measures on $\cx$}. For a measure $\nu\in\mathscr{M}(\cx)$, we
denote by $\|\nu\|\equiv\int_\cx|d\nu(x)|$ the {\it total variation
of $\nu$} and $\supp\nu$ the {\it smallest closed set $F\subseteq\cx$
for which $\nu$
vanishes on $\cx\setminus F$} (such a smallest closed set always exists
since $\cx$ is separable; see \cite[p.\,466]{ntv2}).
Also, for any function $f$, $\supp f$ means the {\it essential support
of the function $f$, namely, the smallest closed set
$F\subseteq\cx$ such that $f$ vanishes at $\mu$-almost every $x\in \cx\setminus F$}.

\begin{defn}\rm\label{d1.3}
Let $\triangle\equiv\{(x, x):\,\, x\in\cx\}$. A {\it standard
kernel} is a mapping $K:\,\,\cx\times\cx\setminus \triangle\to \cc$
for which, there exist positive constants $\tau\in(0, 1]$ and
$C$ such that for all $x$, $y\in\cx$ with $x\not=y$,
\begin{equation}\label{1.5}
|K(x, y)|\le C\dfrac1{\lz(x, d(x, y))},
\end{equation}
and  that for all $x$, $\wz x$, $y\in\cx$ with $d(x, y)\ge 2d(x, \wz x)$,
\begin{equation}\label{1.6}
|K(x, y)-K(\wz x, y)|+|K(y, x)-K(y, \wz x)| \le C\dfrac{[d(x, \wz
x)]^\tau}{[d(x, y)]^\tau\lz(x, d(x, y))}.
\end{equation}

A linear operator $T$ is called a {\it Calder\'on-Zygmund operator}
with $K$ satisfying \eqref{1.5} and \eqref{1.6} if for all $f\in
L^\fz_b(\mu)$, the {\it space of bounded functions with bounded
support}, and $x\notin\supp f$,
\begin{equation*}
Tf(x)\equiv \dint_\cx K(x, y)f(y)\,d\mu(y).
\end{equation*}
\end{defn}

A new example of operators with kernel satisfying \eqref{1.5} and \eqref{1.6}
is the so-called Bergman-type operator appearing in \cite{vw09};
see also \cite{hm} for an explanation.

Assume that $T$ is a Calder\'on-Zygmund operator with $K$
satisfying \eqref{1.5} and \eqref{1.6}.
For any $\nu\in \mathscr{M}(\cx)$ with bounded support and
$x\in\cx\setminus \supp\nu$,
define
\begin{equation*}
T\nu(x)\equiv\dint_\cx K(x,y)\,d\nu(y).
\end{equation*}
Moreover, the maximal operator $T^\sharp$ associated with $T$ is
defined as follows. For every $f\in L^\fz_b(\mu)$ and $\nu\in
\mathscr{M}(\cx)$, we set, for all $x\in\mathcal {X}$,
\begin{equation*}
T^\sharp f(x)\equiv\sup\limits_{r>0}|T_rf(x)|
\end{equation*}
and
\begin{equation*}
T^\sharp \nu(x)\equiv\sup\limits_{r>0}|T_r\nu(x)|,
\end{equation*}
where for every $r>0$,
\begin{equation*}
T_rf(x)\equiv\dint_{d(x,\,y)> r}K(x,y)f(y)\,d\mu(y)
\end{equation*}
and
\begin{equation*}
T_r\nu(x)\equiv\dint_{d(x,\,y)> r}K(x,y)\,d\nu(y).
\end{equation*}

The main result of this paper reads as follows.

\begin{thm}\label{t1.1}
Let $T$ be a Calder\'on-Zygmund operator with kernel satisfying
\eqref{1.5} and \eqref{1.6}. Then the
following statements are equivalent:
\begin{itemize}
  \item[\rm(i)] $T$ is bounded on $\lt$; namely, there exists a positive
  constant $C$ such that for all $f\in\lt$,
  $$\|Tf\|_\lt\le C\|f\|_\lt.$$
  \item[\rm(ii)] $T$ is bounded on $\lp$ for some $p\in (1, \fz)$;
  namely, there exists a positive constant $C(p)$, depending on $p$, such that for all
  $f\in \lp$,
  $$\|Tf\|_\lp\le C(p)\|f\|_\lp.$$
  \item[\rm(iii)] $T$ is bounded from $\lo$ to $\wlo$; namely,
  there exists a positive constant $\wz C$ such that for all $f\in \lo$,
  \begin{equation}\label{1.7}
  \|Tf\|_\wlo\le \wz C\|f\|_\lo.
  \end{equation}
  \end{itemize}
\end{thm}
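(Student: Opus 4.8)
The plan is to prove the three statements equivalent via the cyclic chain (i)$\Rightarrow$(ii)$\Rightarrow$(iii)$\Rightarrow$(i). The first implication is trivial, since $2\in(1,\fz)$; all the content lies in the remaining two.

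For (ii)$\Rightarrow$(iii), let $T$ be bounded on $\lp$ for the given $p\in(1,\fz)$ and carry out the classical Calder\'on-Zygmund argument, now based on a Calder\'on-Zygmund decomposition adapted to $(\cx,d,\mu)$, which is available in the upper doubling, geometrically doubling, non-atomic setting (non-atomicity being used to split masses in the selection of the exceptional balls). Given $f\in\lo$ and a level $t>0$, write $f=g+\dsum_ib_i$ where $\|g\|_\lin\ls t$ and $\|g\|_\lo\ls\|f\|_\lo$, each $b_i$ is supported in a ball $B_i$ with $\dint_\cx b_i\,d\mu=0$ and $\|b_i\|_\lo\ls t\,\mu(6B_i)$, and $\dsum_i\mu(6B_i)\ls t^{-1}\|f\|_\lo$. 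Then $\mu(\{|Tf|>t\})$ is at most $\mu(\{|Tg|>t/2\})+\dsum_i\mu(6B_i)+\frac2t\dsum_i\dint_{\cx\setminus6B_i}|Tb_i|\,d\mu$. Since $\|g\|_\lp^p\le\|g\|_\lin^{p-1}\|g\|_\lo\ls t^{p-1}\|f\|_\lo$, the boundedness of $T$ on $\lp$ together with Chebyshev's inequality bounds the first term by $(2/t)^p\|Tg\|_\lp^p\ls t^{-1}\|f\|_\lo$, and the second is $\ls t^{-1}\|f\|_\lo$ by construction. For the third term, the vanishing integral of $b_i$ gives $Tb_i(x)=\dint_\cx[K(x,y)-K(x,z_i)]b_i(y)\,d\mu(y)$, where $z_i$ is the centre of $B_i$, so by Fubini it suffices to establish the H\"ormander-type estimate
\[
\dint_{\cx\setminus6B_i}|K(x,y)-K(x,z_i)|\,d\mu(x)\ls1\qquad\text{for all } y\in B_i.
\]
This follows from \eqref{1.6}: for $x\notin6B_i$ and $y\in B_i$ one has $d(x,y)\ge2d(y,z_i)$, so \eqref{1.6} (with the two kernel variables playing interchanged roles) applies, and decomposing $\cx\setminus6B_i$ into the annuli $\{x:\,2^kr_{B_i}\le d(x,z_i)<2^{k+1}r_{B_i}\}$ with $k\ge1$, on which the integrand is $\ls r_{B_i}^\tau(2^kr_{B_i})^{-\tau}[\lz(z_i,2^kr_{B_i})]^{-1}$, and using \eqref{1.3} together with $\mu(B(z_i,2^{k+1}r_{B_i}))\le\lz(z_i,2^{k+1}r_{B_i})$, the $k$-th term is $\ls2^{-k\tau}$; summing the geometric series gives the claim. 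Hence $\dint_{\cx\setminus6B_i}|Tb_i|\,d\mu\ls\|b_i\|_\lo\ls t\,\mu(6B_i)$, so the third term is $\ls\dsum_i\mu(6B_i)\ls t^{-1}\|f\|_\lo$, which is \eqref{1.7}.

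The implication (iii)$\Rightarrow$(i) is the core of the theorem; here I would carry over the argument of Nazarov, Treil and Volberg \cite{ntv2} from the polynomial growth condition \eqref{1.2} to the present setting, replacing $r^\kz$ systematically by the dominating function $\lz(x,r)$, and proceeding in two stages. First, using the weak type $(1,1)$ bound, the smoothness \eqref{1.6} of $K$, and a stopping-time/Whitney-type construction in which the lack of doubling of $\mu$ is compensated by working with ``doubling balls'' and with the auxiliary maximal operators of the non-homogeneous Calder\'on-Zygmund theory (which are available in this setting by \cite{h10, hyy}), I would upgrade the crude bound $\mu(\{x\in B:\,|T(f\chi_B)(x)|>s\})\ls s^{-1}\|f\|_\lin\mu(B)$ into a local $L^2$ estimate of the form $\dint_B|T(f\chi_B)|^2\,d\mu\ls\|f\|_\lin^2\,\mu(\rho B)$ for bounded $f$ and suitable balls $B$ (equivalently, into a Coifman-Fefferman-type inequality comparing the distribution function of $|Tf|$ with that of an appropriate maximal function); the non-atomic hypothesis $\mu(\{x\})=0$ is again what makes this stopping construction work. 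Second, a Whitney covering of $\cx$ combined with the H\"ormander-type estimate above, used now to control the interaction of distant Whitney balls, promotes the local estimate to $\|Tf\|_\lt\ls\|f\|_\lt$ for all $f\in\lt$.

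The main obstacle is the first stage of (iii)$\Rightarrow$(i): in \cite{ntv2} several steps silently use $\mu(2B)\ls\mu(B)$ or the explicit power $r^\kz$, and each must be rephrased through doubling balls and the auxiliary maximal functions, while the convergence of every H\"ormander-type integral that appears has to be re-derived from \eqref{1.3}, \eqref{1.4} and the geometric doubling condition rather than read off from powers of the radius. Finally, once (iii)$\Rightarrow$(i) is available, interpolating between the weak type $(1,1)$ bound that accompanies $L^2$-boundedness and $\lt$ (for $1<p<2$), and dualizing, after applying (i)$\Rightarrow$(iii) to the transpose operator, whose kernel also satisfies \eqref{1.5} and \eqref{1.6} (for $2<p<\fz$), in fact yields boundedness of $T$ on $\lp$ for every $p\in(1,\fz)$, the form needed in the application to $T^\sharp$.
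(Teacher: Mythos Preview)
Your (ii)$\Rightarrow$(iii) is a legitimate alternative: this is essentially the Anh--Duong approach via a Calder\'on--Zygmund decomposition (cf.\ \cite{bd}, mentioned in the paper), whereas the paper instead approximates $f^t$ by an elementary measure $\nu^{(N)}=\sum_i\az_i\dz_{x_i}$, proves a separate endpoint estimate for $T$ acting on such measures (Theorem~\ref{t3.1}, which in turn rests on the Cotlar inequality of Lemma~\ref{l3.1}), and combines this with the H\"ormander estimate \eqref{2.6}. Your route is shorter once the decomposition is available; the paper's route avoids importing the decomposition and yields Theorem~\ref{t3.1} as a by-product, which is reused later for $T^\sharp$.

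Your (iii)$\Rightarrow$(i), however, has a genuine gap. What you describe---a stopping-time upgrade of the weak~$(1,1)$ bound to a local $L^2$ estimate $\int_B|T(f\chi_B)|^2\,d\mu\ls\|f\|_\lin^2\mu(\rho B)$, followed by a Whitney patching via the H\"ormander inequality---is neither the argument of \cite{ntv2} nor the paper's, and the second step does not work as stated: the H\"ormander estimate gives $L^1$ control of off-diagonal pieces, not $L^2$ control, so summing local $L^2$ bounds over a Whitney cover does not produce a global $L^2$ bound. The actual argument in \cite{ntv2}, and its adaptation here, proceeds quite differently: one first shows $T^\sharp$ is weak~$(1,1)$ via a Cotlar inequality with $\cm_p$, $p\in(0,1)$ (Theorem~\ref{t4.1}), so that the truncations $T_r$ are uniformly weak~$(1,1)$; next one shows each smooth truncation $T_r^\psi$ is $L^2(\mu_M)$-bounded with some a~priori finite norm $\wz N_2(r,M)$; then a bootstrap through Marcinkiewicz interpolation and duality---crucially reusing the already-proved (i)$\Rightarrow$(iii) for the adjoint $(T_r^\psi)^*$---yields the self-improving inequality $\wz N_2(r,M)\le c\wz N_1^{1/2}\wz N_2(r,M)^{1/2}+\wz c$, whence $\wz N_2(r,M)\ls 1$ uniformly. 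The paper's extra contribution is twofold: the restriction to $\mu_M=\mu|_{\overline B(x_0,M)}$ is needed to make $\wz N_2(r,M)$ finite to begin with, since the pointwise bound \eqref{1.5} involves $\lz(x,r)$ which depends on $x$ (unlike the polynomial case); and once a weak limit $\wz T$ of $T_{r_i}$ is extracted on $L^2(\mu)$, one still has to identify $T$ with $\wz T$ up to a bounded multiplication operator, which the paper does via random dyadic cubes (Proposition~\ref{p4.1} and Lemma~\ref{l4.2}). None of these ingredients---the bootstrap, the restriction to $\mu_M$, or the random-cube identification---appear in your sketch, and the ``doubling balls / stopping time'' mechanism you invoke plays no role in the actual proof.
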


As an application of Theorem \ref{t1.1}, we also obtain the following
boundedness of the maximal operators associated with the Calder\'on-Zygmund
operators.

\begin{cor}\label{c1.1}
Let $T$ be a Calder\'on-Zygmund operator with kernel satisfying
\eqref{1.5} and \eqref{1.6}, which is bounded on $\lt$, and
$T^\sharp$ the maximal operator associated with $T$. Then the
following  statements hold:
\begin{itemize}
  \item [\rm(i)] Let $p\in (1, \fz)$. There exists a positive
  constant $c$ such that for all
  $f\in \lp$, $$\lf\|T^\sharp f\r\|_\lp\le c\|f\|_\lp.$$
  \item [\rm(ii)] There exists a positive constant $\wz c$ such that for all
  $\nu\in \mathscr{M}(\cx)$,
  \begin{equation}\label{1.8}
  \lf\|T^\sharp\nu\r\|_\wlo\le \wz c\|\nu\|.
  \end{equation}
  Moreover, for all $f\in\lo$,
\begin{equation}\label{1.9}
\lf\|T^{\sharp}f\r\|_{\wlo}\le \wz c\|f\|_\lo.
\end{equation}
  \end{itemize}
\end{cor}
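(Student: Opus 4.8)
My plan is to derive Corollary~\ref{c1.1} from Theorem~\ref{t1.1} by means of a Cotlar-type pointwise domination of the maximal truncated operator $T^\sharp$ by Hardy--Littlewood type maximal operators applied to $Tf$ and to $f$. Fix once and for all a dilation factor $\rho\ge 5$ depending only on the structural constants, and introduce
\[
M_\lz f(x)\equiv\sup_{r>0}\frac1{\lz(x,r)}\int_{B(x,r)}|f(y)|\,d\mu(y),\qquad
M_\rho f(x)\equiv\sup_{B\ni x}\frac1{\mu(\rho B)}\int_{B}|f(y)|\,d\mu(y),
\]
where $B$ runs over balls containing $x$ and $\rho B$ is the concentric $\rho$-dilate, together with the $s$-linearizations $M_{\cdot,s}f\equiv(M_\cdot(|f|^s))^{1/s}$, $s\in(0,1)$. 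A first, routine, step is to record their mapping properties in the present framework: by the Vitali-type $5r$-covering lemma, valid in an arbitrary metric space, both $M_\lz$ and $M_\rho$ map $\lo$ boundedly into $\wlo$ --- for $M_\lz$ the enlargement of balls being absorbed by $\lz(x,5r)\le C_\lz^3\lz(x,r)$, and for $M_\rho$ by the choice $\rho\ge5$; interpolating with the trivial $\lin$-bound, $M_\lz$ and $M_\rho$ are then bounded on $\lp$ for every $p\in(1,\fz)$, and consequently $M_{\lz,s}$ and $M_{\rho,s}$ are bounded on $\lp$ for $p\in(s,\fz)$ and, since they are bounded on $L^{1/s,\,\fz}(\mu)$, map $\wlo$ into $\wlo$ for $s\in(0,1)$.

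The core of the argument is the Cotlar inequality: for every $f\in L^\fz_b(\mu)$, every $s\in(0,1)$, and $\mu$-almost every $x$,
\[
T^\sharp f(x)\ \ls\ M_{\rho,s}(Tf)(x)+M_\lz f(x)+M_\rho f(x).
\]
To prove it, fix $r>0$ and $x\in\supp\mu$ (the complement of $\supp\mu$ is $\mu$-null and may be discarded, and on $\supp\mu$ balls centred at $x$ carry positive mass), write $f=f_1+f_2$ with $f_1=f\chi_{B(x,r)}$, so that $T_rf(x)=Tf_2(x)$. From \eqref{1.5}, \eqref{1.6}, \eqref{1.4}, a dyadic decomposition of $\cx\setminus B(x,r)$ into annuli, and the bound $\mu(B(x,2^{k+1}r))\le\lz(x,2^{k+1}r)\le C_\lz\lz(x,2^kr)$, one obtains $|Tf_2(x)-Tf_2(z)|\ls M_\lz f(x)$ for $z$ in a suitable ball $B_0\ni x$ of radius comparable to $r$. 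Averaging the inequality $|T_rf(x)|\le|Tf_2(z)|+CM_\lz f(x)$ over $z\in B_0$ in the $L^s$-sense and splitting $Tf_2=Tf-Tf_1$, the $Tf$-part is dominated by $(\frac1{\mu(B_0)}\int_{B_0}|Tf|^s\,d\mu)^{1/s}$ and, by the weak type $(1,1)$ bound $\|Tf_1\|_\wlo\ls\|f_1\|_\lo=\int_{B(x,r)}|f|\,d\mu$ furnished by Theorem~\ref{t1.1} together with Kolmogorov's inequality, the $Tf_1$-part is dominated by $\frac1{\mu(B_0)}\int_{B(x,r)}|f|\,d\mu$. The delicate point --- and the main obstacle of the whole corollary --- is to choose $B_0$ to be simultaneously close enough to $B(x,r)$ for the preceding kernel estimates to hold with absolute constants \emph{and} $(\rho,\beta)$-doubling, so that $\mu(\rho B_0)\approx\mu(B_0)$ and the last two quantities are genuinely controlled by $M_{\rho,s}(Tf)(x)$ and $M_\rho f(x)$; one must cope with those scales $r$ near which no doubling ball is comfortably available and with the degeneracies permitted by the non-atomic, merely upper doubling hypotheses, passing to a comparable $(\rho,\beta)$-doubling ball and absorbing the intervening annular error. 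This is exactly where the polynomial-growth arguments of \cite{ntv2} require genuine adaptation, and it is where essentially all the work lies.

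Granting the Cotlar inequality, the corollary follows readily. Since $T$ is bounded on $\lt$, Theorem~\ref{t1.1}, applied also to the adjoint $T^\ast$ --- whose kernel $\overline{K(y,x)}$ again satisfies \eqref{1.5} and \eqref{1.6} by \eqref{1.4} --- together with interpolation and duality shows that $T$ is bounded on $\lp$ for every $p\in(1,\fz)$ and from $\lo$ to $\wlo$. For part~(i), fix $p\in(1,\fz)$ and $s\in(0,1)$; then, first for $f\in L^\fz_b(\mu)$ and then by density,
\[
\|T^\sharp f\|_\lp\ls\|M_{\rho,s}(Tf)\|_\lp+\|M_\lz f\|_\lp+\|M_\rho f\|_\lp\ls\|Tf\|_\lp+\|f\|_\lp\ls\|f\|_\lp,
\]
by the first step and the $\lp$-boundedness of $T$. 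For \eqref{1.9} one argues the same way in $\wlo$: with $s\in(0,1)$ fixed, $M_\lz f,M_\rho f\in\wlo$ with norms $\ls\|f\|_\lo$, while $M_{\rho,s}(Tf)\in\wlo$ with norm $\ls\|Tf\|_\wlo\ls\|f\|_\lo$, using that $M_{\rho,s}$ preserves $\wlo$ and that $Tf\in\wlo$. Finally, \eqref{1.8} is obtained from \eqref{1.9} by approximating an arbitrary $\nu\in\mathscr{M}(\cx)$ by absolutely continuous measures $f_k\,d\mu$ with $\|f_k\,d\mu\|\le\|\nu\|$, for which $T_r(f_k\,d\mu)(x)\to T_r\nu(x)$ for $\mu$-a.e.\ $x$ and all $r$ outside a countable set (so that $T^\sharp\nu(x)\le\liminf_k T^\sharp(f_k\,d\mu)(x)$ $\mu$-a.e.\ by the right-continuity of $r\mapsto T_r\nu(x)$), exactly as in \cite{ntv2}; \eqref{1.9} and the lower semicontinuity of the $\wlo$ quasi-norm then give \eqref{1.8}. (Alternatively, \eqref{1.9} follows directly from a Calderón--Zygmund decomposition of $f$, using the $\lt$-bound for $T^\sharp$ just established on the good part and, on the bad part, the kernel smoothness, the weak type $(1,1)$ of $M_\lz$, and the control of the total mass of the decomposition balls.)
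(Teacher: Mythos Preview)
Your argument for part~(i) and for \eqref{1.9} is essentially the paper's own: the Cotlar-type inequality you sketch is precisely Lemma~\ref{l3.1} (for part~(i)) and Theorem~\ref{t4.1} (for \eqref{1.9}), including the selection of the auxiliary ball $B_0$ with $\mu(\rho B_0)\approx\mu(B_0)$, which in the paper is the choice of the integer $k$ with $\mu_{k+1}\le 4C_\lz^6\mu_{k-1}$. So up to this point the two proofs coincide.

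The genuine gap is in your derivation of \eqref{1.8}. You propose to approximate an arbitrary $\nu\in\mathscr{M}(\cx)$ by absolutely continuous measures $f_k\,d\mu$ with $\|f_k\,d\mu\|\le\|\nu\|$ and $T_r(f_k\,d\mu)(x)\to T_r\nu(x)$ $\mu$-a.e. In the generality of the paper such a sequence need not exist: nothing ties $\supp\nu$ to $\supp\mu$, and if, say, $\nu=\dz_{x_0}$ with $x_0\notin\supp\mu$, then every $f_k\,d\mu$ is supported in $\supp\mu$ and cannot approximate $\dz_{x_0}$ in any sense that would force $\int K(x,y)\chi_{d(x,y)>r}f_k(y)\,d\mu(y)\to K(x,x_0)\chi_{d(x,x_0)>r}$. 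Even when $\supp\nu\subseteq\supp\mu$, a general metric measure space has no convolution or mollification, so the existence of such approximants with controlled total variation is not routine. Your reference to \cite{ntv2} is also inaccurate: Nazarov, Treil and Volberg do \emph{not} approximate by absolutely continuous measures; they approximate $\nu$ by \emph{random elementary measures} $\nu_N=\frac{\|\nu\|}{N}\sum_{i=1}^N\dz_{x_i}$ with i.i.d.\ points $x_i$ distributed according to $\nu/\|\nu\|$, and this is exactly what the paper does as well (see Lemma~\ref{l5.1}, Proposition~\ref{p5.1}, and the Chebyshev/variance computation in the proof of Corollary~\ref{c1.1}). That probabilistic step, together with the Cotlar-type inequality for elementary measures in Lemma~\ref{l5.1}, is the missing idea in your treatment of \eqref{1.8}. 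Note also that the paper runs the implication in the opposite direction: it proves \eqref{1.8} first and deduces \eqref{1.9} by taking $d\nu=f\,d\mu$.
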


Together, Theorem \ref{t1.1} and Corollary \ref{c1.1} consist
of a generalization of Nazarov--Treil--Volberg's
\cite[Theorems 1.1 and 10.1]{ntv2} from measures of type
\eqref{1.2} to general upper doubling measures.

This paper is organized as follows. Let $(\cx, d, \mu)$ be a
separable metric space satisfying Definitions \ref{d1.1} and
\ref{d1.2}, and the non-atomic condition. In Section \ref{s2}, we
make some preliminaries, including a Whitney-type Covering Lemma
\ref{l2.2} and a H\"ormander-type inequality, Lemma \ref{l2.4}. In
Section \ref{s3}, we first establish a Cotlar type inequality and an
endpoint estimate for $T$ in terms of the so-called \emph{elementary
measures}, which is an alternative to the Calder\'on-Zygmund
decomposition introduced by Nazarov, Treil and Volberg \cite{ntv2}
in the case of $\cx\equiv\rr^n$ and the polynomial bound
\eqref{1.2}. As an application of these estimates and the non-atomic
assumption, we further obtain (i) $\Rightarrow$ (ii),
(i) $\Rightarrow$ (iii) and (ii) $\Rightarrow$ (iii)
of Theorem \ref{t1.1}. We remark that the
non-atomic assumption is to guarantee that every $A\subseteq\cx$ of
positive $\mu$-measure can be further divided into two subsets, both
of positive $\mu$-measure (see Definition~\ref{d3.1} and
Remark~\ref{r3.2}). Notice that the non-atomic condition is
automatically true under the polynomial growth condition
\eqref{1.2}.

Section \ref{s4} is devoted to the proof of (iii) $\Rightarrow$ (i)
of Theorem \ref{t1.1}, while the proof of Corollary \ref{c1.1}
is presented in Section \ref{s5}. We point out that in \cite{ntv2},
the size condition of a given
Calder\'on-Zygmund kernel $K(x, y)$ is just related to the distance
$d(x, y)$ of $x$ and $y$, which is a very important fact used in
\cite{ntv2}. However, this may be false in our context, since $K(x,
y)$ is controlled by $[\lz(x, d(x, y))]^{-1}$ and $\lz(x, d(x, y))$
depends not only on $d(x, y)$, but also on $x$. To overcome this
difficulty, we first restrict $\mu$ to the closure of some ball,
$\overline B(x_0, M)$ for some fixed $x_0\in\cx$  and large radius
$M$, where and in what follows, for an open ball $B$, $\overline B$
means the {\it closure of} $B$, and show that (iii) $\Rightarrow$ (i)
of Theorem \ref{t1.1} holds for the restriction of $\mu$
with constant independent of $M$.
Then by a limiting argument we obtain (iii) $\Rightarrow$ (i)
of Theorem \ref{t1.1} for $\mu$.
Similar method is also used in the proof of Corollary \ref{c1.1}
in Section \ref{s5}. In
Section \ref{s5}, we also obtain an endpoint estimate for $T^\sharp$
via the elementary measures. Then as in \cite{ntv2}, using this and
some tools of probability theory, we establish Corollary \ref{c1.1}.

While this manuscript was in finishing touch, we learned that
(i) $\Rightarrow$ (ii) and (i) $\Rightarrow$ (iii)
of Theorem \ref{t1.1} and a variant of Lemma \ref{l3.1} in this paper
were also independently obtained by Anh and Duong  in \cite{bd} via
a different approach modeled after the work of Tolsa \cite{t01} for
measures of type \eqref{1.2} on $\rr^n$. In fact, Anh and Duong in
\cite{bd} first established a variant of the Calder\'on-Zygmund
decomposition in this setting; then as an application of this, Anh
and Duong further proved Theorem \ref{t1.1} and a variant of Lemma
\ref{l3.1}. Our approach, on the other hand, consists of extending
the techniques of Nazarov, Treil and Volberg \cite{ntv2}.

Finally, we make some conventions on symbols.
Throughout the paper, $C$, $\wz C$, $c$ and $\wz c$
stand for {\it positive constants} which
are independent of the main parameters, but they may vary from line to
line. Constants with subscripts,
such as $C_1$ and $c_1$, do not change in different
occurrences. Also, $C(\az,\bz,\cdots)$
denotes a positive constant depending on $\az,\bz,\cdots$.
If $f\le Cg$, we then write $f\ls g$ or $g\gs f$;
and if $f \ls g\ls f$, we then write $f\sim g.$
For any $q\in(1, \fz)$, let $q'\equiv q/(q-1)$ be
the {\it conjugate index} of $q$.
Sometimes, the {\it characteristic function} of
a set $E$ in $\cx$ is denoted by $\chi_E$ or $1_E$, depending on
what seems convenient in a particular place.
For $\rho\in(0, \fz)$ and $B\equiv B(x, r)$, the notation $\rho B\equiv B(x, \rho r)$
means the concentric dilation of $B$.
For any $f\in L^1_\loc(\mu)$, {\it its average in a set $E$}  is denoted by
$$\ave{f}_E\equiv \frac1{\mu(E)}\dint_Ef(x)\,d\mu(x).$$

\section{Preliminaries}\label{s2}

\hskip\parindent
In this section, we make some preliminary lemmas used in the rest of the paper.
We begin with a covering lemma in \cite{hyy}, which is a simple corollary
of \cite[Theorem 1.2]{he} and \cite[Lemma 2.5]{h10}.

\begin{lem}\label{l2.1}
Let $(\cx, d)$ be a geometrically doubling metric space. Then every
family ${\mathcal F}$ of balls of uniformly bounded diameter contains
an at most countable disjointed subfamily ${\mathcal G}$ such that
$\cup_{B\in {\mathcal F}}B\subseteq\cup_{B\in {\mathcal G}}5B$.
\end{lem}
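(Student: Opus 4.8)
The plan is to prove this as a version of the classical Vitali $5r$-covering lemma (cf.\ \cite[Theorem 1.2]{he}): its set-theoretic core — a greedy selection of pairwise disjoint balls organized by generations of comparable radii — works verbatim in an arbitrary metric space and already yields a disjoint subfamily $\cg\subseteq\cf$ with $\bigcup_{B\in\cf}B\subseteq\bigcup_{B\in\cg}5B$. The geometrically doubling hypothesis will be invoked only at the very end, to ensure that $\cg$ is at most countable; this is the one step I would flag, since on a general metric space a pairwise disjoint family of balls may well be uncountable, so the countability of $\cg$ is a genuine consequence of geometric doubling (via separability) and must be argued, not assumed. Everything else is routine — one only has to keep the dyadic radius bookkeeping straight so that the dilation constant comes out as $5$ (the extra factor $2$ over the naive value $3$ being exactly the ratio allowed between consecutive generations).

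First I would fix, for each $B\in\cf$, a center $x_B$ and radius $r_B$ with $B=B(x_B,r_B)$; choosing, as we may, $r_B\le\diam B+1$, the uniform diameter bound gives $R\equiv\sup_{B\in\cf}r_B<\fz$. I then set $\cf_j\equiv\{B\in\cf:\ R/2^j<r_B\le R/2^{j-1}\}$ for $j\in\nn$, so $\cf=\bigcup_{j\in\nn}\cf_j$. Using Zorn's lemma (the union of a chain of pairwise disjoint subfamilies is pairwise disjoint), I would choose $\cg_1\subseteq\cf_1$ maximal among pairwise disjoint subfamilies of $\cf_1$, and inductively, given $\cg_1,\dots,\cg_{j-1}$, choose $\cg_j$ maximal among pairwise disjoint subfamilies of $\{B\in\cf_j:\ B\cap B'=\emptyset\ \text{for all}\ B'\in\cg_1\cup\cdots\cup\cg_{j-1}\}$. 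Then $\cg\equiv\bigcup_{j\in\nn}\cg_j$ is pairwise disjoint by construction.

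To verify the inclusion, I would fix $B\in\cf$, say $B\in\cf_j$. Either $B$ meets some $B'\in\cg_1\cup\cdots\cup\cg_{j-1}$, or $B$ is admissible in generation $j$ and then, by maximality of $\cg_j$, it meets some $B'\in\cg_j$ (with $B'=B$ if $B\in\cg_j$). Either way $B$ meets some $B'\in\cg_i$ with $i\le j$, whence $r_{B'}>R/2^i\ge R/2^j\ge r_B/2$. For any $y\in B$ and $w\in B\cap B'$, the triangle inequality gives $d(y,x_{B'})\le d(y,x_B)+d(x_B,w)+d(w,x_{B'})<2r_B+r_{B'}<5r_{B'}$, so $y\in 5B'$; thus $B\subseteq5B'$ and $\bigcup_{B\in\cf}B\subseteq\bigcup_{B\in\cg}5B$.

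Finally, for the countability of $\cg$: a geometrically doubling space is separable — iterating Definition \ref{d1.2}, each ball $B(x,r)$ is covered by at most $N_0^{\,k}$ balls of radius $r/2^k$, hence is totally bounded, so $\cx=\bigcup_{k\in\nn}B(x_0,k)$ (for any fixed $x_0\in\cx$) is separable; this is also recorded in \cite[Lemma 2.5]{h10}. In a separable metric space any pairwise disjoint collection of nonempty open sets is at most countable, and each $B\in\cg$ is a nonempty open ball, so $\cg$ is at most countable, as required. (Alternatively one can bypass separability and use Remark \ref{r1.2}(iii) directly: for fixed $x_0$ and each $m,j\in\nn$ only finitely many members of $\cg_j$ have center in $B(x_0,m)$, and $\cx=\bigcup_{m\in\nn}B(x_0,m)$.)
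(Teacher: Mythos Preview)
Your proof is correct and follows exactly the route the paper indicates: the paper does not give its own argument but merely records that Lemma~\ref{l2.1} is ``a simple corollary of \cite[Theorem 1.2]{he} and \cite[Lemma 2.5]{h10}''. You have unpacked precisely these two ingredients --- the greedy dyadic-generation Vitali selection (Heinonen's $5r$-covering lemma) for the disjoint subfamily and the $5B$-inclusion, and the separability of geometrically doubling spaces (Hyt\"onen's lemma) for the countability of $\cg$ --- so there is nothing to compare beyond noting that you have written out what the paper leaves to citation.
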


The following  Whitney type covering lemma was included in
\cite[p.\,70,\,Theorem (1.3)]{cw71} (see also \cite[p.\,623,\,Theorem
(3.2)]{cw77} or \cite{b1}), we present the proof here for completeness.

\begin{lem}\label{l2.2}
Let $\Omega\subsetneq \cx$ be a bounded open set.
Then there exists a sequence $\{B_i\}_i$ of balls such that
\begin{itemize}
  \item [${\rm (w)_i}$] $\Omega=\cup_i B_i$ and $2B_i\subseteq\Omega$ for all $i$;
  \item [${\rm (w)_{ii}}$] there exists a positive constant $C$
  such that for all $x\in\cx$, $\sum_i \chi_{B_i}(x)\le C$;
  \item [${\rm (w)_{iii}}$] for all $i$, $(3B_i)
  \cap(\cx\setminus \Omega)\not=\emptyset$.
\end{itemize}
\end{lem}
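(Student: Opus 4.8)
The plan is to follow the classical Whitney construction adapted to the geometrically doubling (rather than doubling-measure) setting, using only the metric and never the measure $\mu$. For each $x\in\Omega$, since $\Omega\subsetneq\cx$ is open and bounded, the quantity $\dist(x,\cx\setminus\Omega)$ is finite and positive; set $r_x\equiv\frac1{10}\dist(x,\cx\setminus\Omega)$ (the exact constant is flexible, any small enough fixed fraction works) and consider the family $\mathcal F\equiv\{B(x,r_x):\,x\in\Omega\}$. These balls have uniformly bounded diameter because $\Omega$ is bounded, so Lemma \ref{l2.1} applies: there is an at most countable disjointed subfamily $\mathcal G\equiv\{B(x_i,r_{x_i})\}_i$ with $\Omega=\cup_{x\in\Omega}B(x,r_x)\subseteq\cup_i 5B(x_i,r_{x_i})$. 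The claim is that the enlarged balls $B_i\equiv 5B(x_i,r_{x_i})$ do the job.

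First I would verify ${\rm (w)_i}$. The inclusion $\Omega\subseteq\cup_i B_i$ is immediate from the conclusion of Lemma \ref{l2.1}, and each $B_i\subseteq\Omega$ (even $2B_i\subseteq\Omega$) follows from the choice of radius: if $y\in 2B_i=B(x_i,10r_{x_i})=B(x_i,\dist(x_i,\cx\setminus\Omega))$ then $d(y,x_i)<\dist(x_i,\cx\setminus\Omega)$, so $y$ cannot lie in $\cx\setminus\Omega$, i.e. $y\in\Omega$; hence $\Omega=\cup_i B_i$. Next, ${\rm (w)_{iii}}$: since $3B_i=B(x_i,15r_{x_i})$ has radius $\tfrac32\dist(x_i,\cx\setminus\Omega)$, which strictly exceeds $\dist(x_i,\cx\setminus\Omega)$, the ball $3B_i$ must meet $\cx\setminus\Omega$ (otherwise the distance from $x_i$ to the complement would be at least $15r_{x_i}$, a contradiction). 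So far these are routine.

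The bounded-overlap property ${\rm (w)_{ii}}$ is the one real point, and this is where geometric doubling enters. Fix $x\in\cx$ and consider the index set $I_x\equiv\{i:\,x\in B_i\}$. For $i\in I_x$ one has $d(x,x_i)<5r_{x_i}$, and a standard comparison argument shows that all the radii $r_{x_i}$ with $i\in I_x$ are comparable to one another: indeed $|{\dist(x_i,\cx\setminus\Omega)}-\dist(x_j,\cx\setminus\Omega)|\le d(x_i,x_j)$, and $d(x_i,x_j)\le d(x_i,x)+d(x,x_j)<5r_{x_i}+5r_{x_j}$, which after rearranging gives $r_{x_i}\sim r_{x_j}$ with absolute constants (say $r_{x_i}\le 3r_{x_j}$ and vice versa). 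Let $\rho\equiv\inf_{i\in I_x}r_{x_i}$, so $\rho>0$ and $r_{x_i}\le c_1\rho$ for all $i\in I_x$ with $c_1$ absolute. Then every center $x_i$, $i\in I_x$, lies in the ball $B(x,5c_1\rho)$, while the original balls $B(x_i,r_{x_i})$ from $\mathcal G$ are pairwise disjoint and each contains the disjoint ball $B(x_i,\rho/c_1)$ with radius a fixed fraction of $5c_1\rho$. By Remark \ref{r1.2}(iii) (geometric doubling: a ball can contain at most $N_0\eps^{-n}$ centers of disjoint balls of radius $\eps$ times its radius), the number of such $i$ is bounded by an absolute constant $C=C(N_0)$. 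Hence $\#I_x\le C$, which is exactly $\sum_i\chi_{B_i}(x)\le C$. I expect no serious obstacle beyond carefully pinning down the numerical constants so that the radius comparison and the disjoint-ball count line up; everything else is a direct application of Lemma \ref{l2.1} and the geometric doubling characterization already recorded in Remark \ref{r1.2}.
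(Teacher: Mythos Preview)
Your proof is correct and follows essentially the same approach as the paper: the same choice of radii $\frac{1}{10}\dist(x,\cx\setminus\Omega)$, the same application of Lemma~\ref{l2.1} followed by dilation by~$5$, and the same use of Remark~\ref{r1.2}(iii) for the bounded overlap. The only cosmetic difference is that for ${\rm (w)_{ii}}$ the paper compares each $r_{x_i}$ directly to $\dist(x,\cx\setminus\Omega)$ rather than comparing the $r_{x_i}$ to one another, but the arithmetic and the resulting bound are the same.
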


\begin{proof} For any $x\in \Omega$, let
$\hat r(x)\equiv\frac{1}{10}\dist(x,\cx\setminus \Omega)$, where and
in what follows, for any $y$ and set $E$, $\dist(y,
E)\equiv\inf_{z\in E}d(y, z)$. The function $\hat r(x)$ is strictly
positive because $\Omega$ is open and the balls centered at $x$
form a basis of neighborhood of $x$. Then by Lemma \ref{l2.1},
there exists a sequence $\{\hat B_i\}_i\equiv\{B(x_i, \hat r(x_i))\}_i$
of balls with $\{x_i\}_i\subseteq\boz$ satisfying that $\{\hat
B_i\}_i$ are pairwise disjoint and $\{B_i\}_i\equiv\{5\hat B_i\}_i$
forms a covering of $\Omega$. Moreover, for each $i$, set $r_i\equiv
5\hat r(x_i)$. Then for any $i$ and $y\in 2B_i$, since
$\cx\setminus\Omega$ is closed, we have that
\begin{equation*}
\dist(y, \cx\setminus\Omega)\ge \dist(x_i, \cx\setminus \Omega)-
d(y, x_i)>\dist(x_i, \cx\setminus \Omega)-2r_i=0.
\end{equation*}
This yields $y\in \Omega$ and hence $2B_i\subseteq\Omega$, which
implies $\rm(w)_i$. On the other hand, since, by the definition of $r_i$,
$3r_i=\frac32 \dist(x_i,\cx\setminus\Omega)$, we then see that
$(3B_i)\cap(\cx\setminus \Omega)\not=\emptyset$, which implies
${\rm (w)_{iii}}$.

It remains to show ${\rm (w)_{ii}}$. To this end, we claim
that for any $i$ and $x\in B_i\cap \Omega$,
\begin{equation}\label{2.1}
\frac{1}3\dist(x,\cx\setminus \Omega)< r_i< \dist(x,\cx\setminus
\Omega).
\end{equation}
Indeed, by the fact that $\cx\setminus\Omega$
is closed, we have
$$\dist(x_i,\cx\setminus
\Omega)\le \dist(x,\cx\setminus \Omega)+d(x,x_i),$$
which further implies that
\begin{equation}\label{2.2}
\dist(x_i,\cx\setminus\Omega)-r_i< \dist(x,\cx\setminus\Omega).
\end{equation}
Observe that by the definition of $r_i$, $\dist(x_i,\cx\setminus\Omega)=2r_i.$
This together with \eqref{2.2} gives us that
\begin{equation}\label{2.3}
r_i< \dist(x,\cx\setminus\Omega).
\end{equation}
On the other hand, by this, we also have
$$\dist(x,\cx\setminus\Omega)\le d(x,x_i)+
\dist(x_i,\cx\setminus\Omega)< 3r_i,$$
which combined with \eqref{2.3} implies \eqref{2.1}, and hence the claim holds.

Now let $x\in\Omega$ and $B_i$ contain $x$. Then by \eqref{2.1}, we
see that $B_i\subseteq B(x, 2\dist(x, \cx\setminus\Omega))$. On the
other hand, observe that $\{\frac15B_i\}_i= \{\hat B_i\}_i$ are
mutually disjoint. This together with another application of
\eqref{2.1} implies that $\{B(x_i, \frac1{15}\dist(x,
\cx\setminus\Omega))\}_i$ are also pairwise disjoint. From this and
Remark \ref{r1.2}(iii), we deduce that the cardinality of
$$\lf\{B\lf(x_i,\frac1{15}\dist(x, \cx\setminus\Omega)\r)\r\}_i$$
contained in $B(x, 2\dist(x, \cx\setminus\Omega))$ is
at most $N_0{30}^n$, and so is the
cardinality of $\{B_i\}_i$ containing $x$. Thus, ${\rm (w)_{ii}}$ holds,
which completes the proof of Lemma \ref{l2.2}.
\end{proof}

Let $p\in(0, \fz)$, $f\in L^p_\loc(\mu)$ and
$\nu\in\mathscr{M}(\cx)$. The {\it centered maximal functions}
$\cm_p f$ and $\cm \nu$ are defined  by setting, for all $x\in\cx$,
\begin{equation*}
\mathcal{M}_pf(x)\equiv \sup_{r>0}
\left[\dfrac{1}{\mu(\overline B(x,5\,r))}\int_{\overline
B(x,\,r)}|f(y)|^{p} \,d\mu(y)\right]^{\frac{1}{p}}
\end{equation*}
and
$$\cm\nu(x)\equiv\sup_{r>0}\frac{\nu(\overline B(x, r))}{\mu(\overline B(x, 5r))}.$$
 If $p=1$, we denote $\cm_1$ simply by $\cm$, which is called
the {\it centered Hardy-Littlewood maximal operator}.

\begin{lem}\label{l2.3}  The following statements hold:
\begin{itemize}
  \item [{\rm (i)}] Let $p\in[1, \fz)$. Then $\cm_p$ is bounded on $\lq$ for all
$q\in(p, \fz]$ and from $\lp$ to $\wlp$.
  \item [{\rm (ii)}] Let $p\in (0, 1)$. Then $\cm_p$ is bounded on $\wlo$.
  \item [{\rm (iii)}] There exists a positive constant $C$ such that for
  all $\nu\in \mathscr{M}(\cx)$, $\cm \nu\in\wlo$ and
$$  \|\cm \nu\|_\wlo\le C\|\nu\|.$$
\end{itemize}
\end{lem}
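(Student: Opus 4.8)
The plan is to prove the three assertions by reducing each to a standard weak-type / strong-type covering argument, exploiting the upper doubling structure of $\lz$ and the geometrically doubling property of $(\cx, d)$. The key technical point is that the enlargement factor $5$ inside the measure in the denominator (i.e. $\mu(\overline B(x,5r))$ rather than $\mu(\overline B(x,r))$) lets us bypass the Besicovitch covering lemma, which is unavailable here; instead we use the $5r$-covering Lemma \ref{l2.1}. I would treat (iii) first, then (i), and finally (ii) as a consequence of (i) by a standard Kolmogorov-type exponent trick.

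For (iii): fix $\nu\in\mathscr{M}(\cx)$ and $t>0$, and set $E_t\equiv\{x\in\cx:\ \cm\nu(x)>t\}$. For each $x\in E_t$ there is a radius $r_x>0$ with $\nu(\overline B(x,r_x))>t\,\mu(\overline B(x,5r_x))$; without loss of generality the radii $r_x$ may be taken uniformly bounded on any fixed bounded piece of $E_t$ (or one argues on the increasing sets $E_t\cap B(x_0,R)$ and lets $R\to\fz$ at the end, using $\|\nu\|<\fz$). Apply Lemma \ref{l2.1} to the family $\{\overline B(x,r_x)\}_{x}$ — strictly speaking to the corresponding open balls, then pass to closures — to extract an at most countable disjoint subfamily $\{\overline B(x_i,r_i)\}_i$ with $E_t\subseteq\bigcup_i \overline B(x_i,5r_i)$. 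Then
\begin{equation*}
\mu(E_t)\le\sum_i\mu(\overline B(x_i,5r_i))<\frac1t\sum_i\nu(\overline B(x_i,r_i))\le\frac1t\,|\nu|\Big(\bigcup_i\overline B(x_i,r_i)\Big)\le\frac{\|\nu\|}{t},
\end{equation*}
using disjointness of the $\overline B(x_i,r_i)$ in the third step. This gives the claimed $\|\cm\nu\|_\wlo\le C\|\nu\|$ with $C=1$ (any $C\ge1$ works after the limiting argument).

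For (i): the weak $(1,1)$ bound for $\cm=\cm_1$ is exactly the special case of (iii) with $d\nu=|f|\,d\mu$, since then $\nu(\overline B(x,r))=\int_{\overline B(x,r)}|f|\,d\mu$ and $\|\nu\|=\|f\|_\lo$; more generally, for $p\in[1,\fz)$ one has $\cm_pf=(\cm(|f|^p))^{1/p}$, so $\mu(\{\cm_pf>t\})=\mu(\{\cm(|f|^p)>t^p\})\le t^{-p}\||f|^p\|_\lo=t^{-p}\|f\|_\lp^p$, i.e. the weak $(p,p)$ bound. The $\lq\to\lq$ boundedness for $q\in(p,\fz]$ then follows: the $q=\fz$ endpoint is trivial since $\cm_pf(x)\le\|f\|_\lin\cdot\sup_r[\mu(\overline B(x,r))/\mu(\overline B(x,5r))]^{1/p}\le\|f\|_\lin$, and the intermediate $q$ come from the Marcinkiewicz interpolation theorem applied between weak $(p,p)$ and $L^\fz\to L^\fz$. (One should note $\cm_p$ is sublinear, or at least quasi-sublinear, so Marcinkiewicz applies.) For (ii), let $p\in(0,1)$ and $f\in\wlo$; we must show $\|\cm_pf\|_\wlo\lesssim\|f\|_\wlo$. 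Fix $t>0$, split $f=f\chi_{\{|f|\le t\}}+f\chi_{\{|f|>t\}}=:g+h$. For the bounded part, $\cm_p g(x)\le\|g\|_\lin\le t$ (again using the denominator domination), so $\{\cm_pf>2t\}\subseteq\{\cm_p h>t\}$ up to the constant; for $h$, since $p<1$ we have $\cm_ph(x)^p\le\cm_1(|h|^p)(x)\cdot$(a harmless reshuffling — more precisely $[\frac1{\mu(\overline B(x,5r))}\int_{\overline B(x,r)}|h|^p]\le\cm(|h|^p)(x)$), hence by the weak $(1,1)$ bound $\mu(\{\cm_ph>t\})\le C t^{-p}\||h|^p\|_\lo=Ct^{-p}\int_{\{|f|>t\}}|f|^p\,d\mu$; finally $\int_{\{|f|>t\}}|f|^p\,d\mu=\int_0^\fz p s^{p-1}\mu(\{|f|>\max(s,t)\})\,ds\le\|f\|_\wlo\int_0^\fz p s^{p-1}\min(s,t)^{-1}\,ds$, and since $p<1$ this integral converges and equals a constant times $t^{p-1}$, giving $\mu(\{\cm_pf>2t\})\lesssim t^{-p}\cdot\|f\|_\wlo\cdot t^{p-1}=\|f\|_\wlo/t$, as desired.

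The main obstacle is not any single deep step but rather bookkeeping: one must be careful that Lemma \ref{l2.1} is stated for families of balls of uniformly bounded diameter and for \emph{open} balls, whereas the maximal functions here are defined via closed balls $\overline B$, so in (iii) the uniform-boundedness reduction (truncating to $E_t\cap B(x_0,R)$ or to radii $\le R$ and invoking $\sigma$-finiteness of $\mu$ restricted to bounded sets together with $\|\nu\|<\fz$) and the passage open $\leftrightarrow$ closed must be handled explicitly; and in (ii) the convergence of $\int_0^\fz s^{p-1}\min(s,t)^{-1}\,ds$ genuinely uses $p<1$, which is the whole point of that case. Everything else — the interpolation in (i), the pointwise domination $\cm_pf\le\|f\|_\lin$, the identity $\cm_pf=(\cm(|f|^p))^{1/p}$ — is routine.
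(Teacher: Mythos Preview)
Your proof is correct and follows essentially the same approach as the paper, which disposes of Lemma~\ref{l2.3} by reference to the $5r$-covering argument of Lemma~\ref{l2.1} and standard interpolation (citing \cite[Proposition~3.5]{h10} for (i) and \cite[Lemma~3.2]{ntv2} for (ii)), with (iii) treated as a variant of the weak $(1,1)$ case of (i); you have simply written out what the paper leaves implicit. One small slip in (ii): the bound should be $\mu(\{|f|>\max(s,t)\})\le\|f\|_\wlo\cdot\max(s,t)^{-1}$, not $\min(s,t)^{-1}$ --- with $\min$ the integral $\int_0^t ps^{p-2}\,ds$ diverges at $0$ for $p<1$, whereas the intended integral $\int_0^\infty ps^{p-1}\max(s,t)^{-1}\,ds=t^{p-1}/(1-p)$ gives exactly what you need.
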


\begin{proof}
The proof of (ii) is just a mimic of the one in \cite[Lemma
3.2]{ntv2}, and the proof of (iii) is similar to that of boundedness
of $\cm$ from $\lo$ to $\wlo$ in (i). Thus, it suffices to prove (i)
by similarity. By Lemma 2.5 in \cite{h10}, any disjoint collection
of open balls is at most countable, so is any disjoint collection of
closed balls. Moreover, by an argument similar to that used in the
proof of Proposition 3.5 in \cite{h10}, we see that $\cm_p$ is
bounded on $\lq$ for all
 $q\in(p, \fz]$ and bounded from $\lp$ to $\wlp$. This finishes the proof of
 Lemma \ref{l2.3}.
\end{proof}

\begin{lem}\label{l2.4}
Let $\eta\in\mathscr{M}(\cx)$ such that
$\eta(\cx)=0\,and\,\supp\,\eta\subseteq \overline B(x,\rho)$ for
some $\rho\in(0, \infty)$ and $x\in\cx$, and $T$ be a
Calder\'on-Zygmund operator with kernel satisfying
\eqref{1.5} and \eqref{1.6} as in Definition \ref{d1.3}. Then there
exists a positive constant $C$, independent of $\eta$, $x$ and
$\rho$, such that for all nonnegative Borel measures $\nu$ on $\cx$,
\begin{equation}\label{2.4}
\int_{\cx\setminus B(x,\,2\rho)}|T\eta(y)|
\,d\nu(y)\le C\|\eta\|\cm\nu(x).
\end{equation}
Moreover, for any $p\in[1, \fz)$ and $f\in L^p_\loc(\mu)$,
\begin{equation}\label{2.5}
\int_{\cx\setminus B(x,\,2\rho)}|T\eta(y)||f(y)|
\,d\mu(y)\le C\|\eta\|\cm_pf(x)
\end{equation}
and
\begin{equation}\label{2.6}
\int_{\cx\setminus B(x,\,2\rho)}|T\eta(y)|\,d\mu(y)\le C\|\eta\|,
\end{equation}
where $C$ is a positive constant, independent of $\eta$, $x$, $\rho$
and $f$.
\end{lem}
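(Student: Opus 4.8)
The plan is to exploit the cancellation $\eta(\cx)=0$ together with the H\"ormander-type smoothness of the kernel, in the style of the classical H\"ormander inequality. First I would write, for any $y\in\cx\setminus B(x,2\rho)$, using $\eta(\cx)=0$,
\begin{equation*}
T\eta(y)=\dint_{\overline B(x,\rho)}K(y,z)\,d\eta(z)
=\dint_{\overline B(x,\rho)}[K(y,z)-K(y,x)]\,d\eta(z),
\end{equation*}
which is legitimate since $y\notin\supp\eta$. For $z\in\overline B(x,\rho)$ and $y\notin B(x,2\rho)$ we have $d(y,x)\ge 2\rho\ge 2d(x,z)$, so the second estimate in \eqref{1.6} applies and gives
\begin{equation*}
|K(y,z)-K(y,x)|\le C\frac{[d(x,z)]^\tau}{[d(y,x)]^\tau\,\lz(y,d(y,x))}
\le C\frac{\rho^\tau}{[d(y,x)]^\tau\,\lz(y,d(y,x))}.
\end{equation*}
Here I use \eqref{1.4} freely to interchange the roles of $x$ and $z$ inside $\lz$ when $d(x,z)\le d(y,x)$, up to a harmless constant. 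Integrating in $z$ against $|d\eta|$ yields the pointwise bound $|T\eta(y)|\le C\|\eta\|\,\rho^\tau\,[d(y,x)]^{-\tau}[\lz(y,d(y,x))]^{-1}$ for all $y\notin B(x,2\rho)$.

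Next I would integrate this pointwise bound against $\nu$ over the annular decomposition $\cx\setminus B(x,2\rho)=\bigcup_{k\ge1}\big(B(x,2^{k+1}\rho)\setminus B(x,2^k\rho)\big)$. On the $k$-th annulus $d(y,x)\sim 2^k\rho$, so $[d(y,x)]^{-\tau}\le C(2^k\rho)^{-\tau}$, and using \eqref{1.4} plus monotonicity of $\lz$ we get $\lz(y,d(y,x))\gtrsim \lz(x,2^k\rho)\ge \lz(x,2^{k-1}\rho)\cdots$; more to the point, $\nu(B(x,2^{k+1}\rho))/\lz(x,2^k\rho)$ is controlled by $\cm\nu(x)$ up to the doubling constant $C_\lz$ appearing in \eqref{1.3}, since $\lz(x,2^k\rho)\sim\lz(x,5\cdot2^{k}\rho)$ by iterating \eqref{1.3} a bounded number of times, and $\mu(\overline B(x,5\cdot2^{k}\rho))\le\lz(x,5\cdot2^k\rho)$. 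Hence the $k$-th term is bounded by $C\|\eta\|\,\rho^\tau(2^k\rho)^{-\tau}\cm\nu(x)=C 2^{-k\tau}\|\eta\|\cm\nu(x)$, and summing the geometric series in $k$ gives \eqref{2.4}. Inequalities \eqref{2.5} and \eqref{2.6} follow by the identical annular argument, replacing $d\nu$ by $|f|\,d\mu$ (then on each annulus Jensen/H\"older in the $p$-average of $|f|$ over $\overline B(x,2^k\rho)$ produces $\cm_pf(x)$ against $\mu(\overline B(x,5\cdot2^k\rho))\le\lz(x,5\cdot2^k\rho)$) and by taking $\nu=\mu$ and bounding $\mu(B(x,2^{k+1}\rho))\le\lz(x,2^{k+1}\rho)\lesssim\lz(x,d(y,x))$ directly, so the $\lz$ factors cancel and we are left with $C\sum_k2^{-k\tau}\|\eta\|$.

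The one genuinely delicate point, and the step to handle carefully, is the bookkeeping of the dominating function: one must only ever compare $\lz(a,s)$ with $\lz(b,t)$ when $d(a,b)\le\max(s,t)$ (so that \eqref{1.4} is applicable) and when $s$ and $t$ differ by a bounded factor (so that \eqref{1.3} applied boundedly many times suffices). Since on the $k$-th annulus all relevant radii are comparable to $2^k\rho$ and all relevant centers ($x$, $z\in\overline B(x,\rho)$, $y$) lie within $O(2^k\rho)$ of one another, these constraints are met with constants depending only on $\tau$, $C$, $C_\lz$, $\wz C$, and $n$; in particular nothing depends on $\eta$, $x$, $\rho$ or $f$, which is exactly the uniformity the lemma asserts. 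Everything else is the routine geometric-series summation sketched above.
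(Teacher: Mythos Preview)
Your proposal is correct and follows essentially the same route as the paper's proof: subtract $K(y,x)$ using $\eta(\cx)=0$, apply the smoothness estimate \eqref{1.6}, decompose $\cx\setminus B(x,2\rho)$ into dyadic annuli, and sum the resulting geometric series in $2^{-k\tau}$ after converting $\lz(x,2^k\rho)$ to $\mu(\overline B(x,5\cdot 2^{k+1}\rho))$ via \eqref{1.3}. The paper likewise proves only \eqref{2.4} in detail and dismisses \eqref{2.5}--\eqref{2.6} ``by similarity''; your only cosmetic deviation is writing $\lz(y,d(y,x))$ and then invoking \eqref{1.4}, whereas the paper reads $\lz(x,d(x,y))$ directly off \eqref{1.6}.
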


\begin{proof}
By similarity, we only prove \eqref{2.4}. By $\eta(\cx)=0$,
$\supp\,\eta\subseteq \overline B(x,\rho)$ and \eqref{1.6}, we have
that for any $y\in\cx\setminus B(x,\,2\rho)$,
\begin{eqnarray*}
|T\eta(y)| &&=\left|\int_{\overline B(x,\,\rho)}K(y,\wz{x})\,d\eta(\wz{x})\right|
=\left|\int_{\overline B(x,\,\rho)}[K(y,\wz{x})-K(y,x)]\,d\eta(\wz{x})\right|\\
&&\le\|\eta\|\sup_{\wz{x}\in \overline B(x,\,\rho)}
|K(y,\wz{x})-K(y,x)|\ls\|\eta\|\left[\frac{\rho}{d(x, y)}\right]^{\tau}
\frac{1}{\lz(x,d(x,y))}.
\end{eqnarray*}
Therefore, by \eqref{1.3}, we have that
\begin{eqnarray*}
\dint_{\cx\setminus B(x,\, 2\rho)}|T\eta(y)|\,d\nu(y)
&&\ls\|\eta\|\int_{\cx\setminus B(x,\,2\rho)}\lf[\frac{\rho}{d(x,
y)}\r]^{\tau}
\frac{1}{\lz(x,d(x,y))}\,d\nu(y)\\
&&\ls\|\eta\|\sum_{k=1}^\fz\int_{B(x,\, 2^{k+1}\,\rho)\setminus
B(x,\,2^{k}\rho)}\frac{1}{2^{k\tau}}
\frac{1}{\lz(x, 2^{k}\rho)}\,d\nu(y)\\
&&\ls\|\eta\|\sum_{k=1}^\fz\frac{1}{2^{k\tau}}
\frac{\nu(B(x, 2^{k+1}\rho))}{\mu(B(x, 5\cdot 2^{k+1}\rho))}\\
&&\ls\|\eta\|\sum_{k=1}^\fz\frac{1}{2^{k\tau}}
\cm\nu(x)\ls\|\eta\|\cm\nu(x),
\end{eqnarray*}
which completes the proof of Lemma \ref{l2.4}.
\end{proof}

\section{Proof of Theorem \ref{t1.1}, Part I}\label{s3}

\hskip\parindent
This section is devoted to the proof of the implicity
${\rm (i)}\Rightarrow{\rm (ii)}$,
${\rm (i)}\Rightarrow{\rm (iii)}$ and
${\rm (ii)}\Rightarrow{\rm (iii)}$ of Theorem \ref{t1.1}.
To this end,  we first establish an endpoint estimate for $T$ via the so-called
elementary measures which are finite linear combinations of unit point
masses with positive coefficients.
We begin with the following Cotlar type inequality inspired by \cite{ntv2}.

\begin{lem}\label{l3.1}
Let $T$ be a Calder\'on-Zygmund operator with kernel satisfying
\eqref{1.5} and \eqref{1.6}, which is bounded on $\lt$.
Then there exist positive
constants $C$ and $c$ such that for any $f\in L^\fz_b(\mu)$ and $x\in\supp\mu$,
\begin{equation}\label{3.1}
T^\sharp(f)(x)\le C\cm(Tf)(x)+c \cm_2(f)(x).
\end{equation}
\end{lem}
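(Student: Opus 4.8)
The plan is to prove the pointwise Cotlar-type inequality \eqref{3.1} by the standard comparison-of-truncations argument, adapted to the upper doubling setting via Lemma \ref{l2.4}. Fix $x\in\supp\mu$ and $r\in(0,\fz)$; I want to bound $|T_rf(x)|$ by the right-hand side of \eqref{3.1}, uniformly in $r$. Split $f=f_1+f_2$, where $f_1\equiv f\chi_{B(x,r)}$ and $f_2\equiv f\chi_{\cx\setminus B(x,r)}$. Then $T_rf(x)=Tf_2(x)$, since the truncated kernel integrates only over $\{d(x,y)>r\}\supseteq$ where $f_2$ lives (and away from the diagonal $Tf_2$ is given by the absolutely convergent integral). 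So it suffices to estimate $|Tf_2(x)|$.

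The next step is the key trick: I would compare $Tf_2(x)$ with the average of $Tf$ over a ball of radius $r$ around $x$. Write, for $\mu$-a.e.\ $z\in B(x,r/2)$ (more precisely, for $z\in B(x,r/2)\cap\supp\mu$ where the defining integral makes sense),
\begin{equation*}
Tf_2(x)=\bigl[Tf_2(x)-Tf_2(z)\bigr]+\bigl[Tf(z)-Tf_1(z)\bigr].
\end{equation*}
Averaging over $z\in B(x,r/2)$ against $d\mu$, divided by $\mu(\overline B(x,5r/2))$ in the spirit of the centered maximal function (here I must be slightly careful: if $\mu(B(x,r/2))=0$ the argument degenerates, but since $x\in\supp\mu$ one can instead pick a suitable radius $\rho\le r/2$ with $\mu(B(x,\rho))>0$, or argue with $\overline B$; I will use the $\overline B$ versions consistent with the definition of $\cm$), I get three contributions. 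The term with $Tf(z)$ averages to at most $\cm(Tf)(x)$ up to a constant. The term with $Tf_1(z)$ is controlled by the weak $(1,1)$ or rather the $\lt$-boundedness of $T$: $\frac{1}{\mu(\overline B(x,5r))}\int_{B(x,r/2)}|Tf_1(z)|\,d\mu(z)\le \bigl[\frac{1}{\mu(\overline B(x,5r))}\int|Tf_1|^2\,d\mu\bigr]^{1/2}\le \bigl[\frac{\|f_1\|_{\lt}^2}{\mu(\overline B(x,5r))}\bigr]^{1/2}\lesssim \bigl[\frac{1}{\mu(\overline B(x,5r))}\int_{B(x,r)}|f|^2\,d\mu\bigr]^{1/2}\le \cm_2(f)(x)$, using Cauchy--Schwarz and $\lt$-boundedness. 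The remaining term $Tf_2(x)-Tf_2(z)$ is a kernel-difference (Hörmander) term: with $d(x,z)<r/2\le \tfrac12 d(x,y)$ for $y\in\supp f_2$, apply \eqref{1.6} directly and sum over dyadic annuli $B(x,2^{k+1}r)\setminus B(x,2^k r)$ using \eqref{1.3} to bound it by $\lesssim \cm f(z)\le \cm f(x)\lesssim \cm_2 f(x)$ — essentially this is \eqref{2.5} of Lemma \ref{l2.4} applied to the (signed, not mean-zero but compactly supported away from $B(x,r)$) pieces; in fact one can feed $\eta\equiv f_1$-type localizations into Lemma \ref{l2.4}, or just redo its short computation. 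Since $\cm f\le\cm_2 f$ by Jensen, collecting the three bounds gives $|T_rf(x)|=|Tf_2(x)|\le C\cm(Tf)(x)+c\cm_2(f)(x)$, and taking the supremum over $r$ yields \eqref{3.1}.

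I expect the main obstacle to be the degeneracy issue of averaging over balls that may have zero $\mu$-measure, together with the fact that, unlike in \cite{ntv2}, the kernel bound \eqref{1.5} involves $\lz(x,d(x,y))$ rather than a pure power of $d(x,y)$; one must use the property \eqref{1.4} of $\lz$ (comparability along the ball) to transfer estimates from the variable $z$ back to $x$ when summing the annular pieces, and the doubling property \eqref{1.3} of $\lz$ to control the geometric-series sum over $k$. A second, more technical point is justifying that $Tf_2$ and $Tf$ are defined ($\mu$-a.e., or everywhere on $\supp\mu$) and that the manipulations — splitting, the identity $T_rf=Tf_2$ away from $B(x,r)$, and termwise integration — are legitimate for $f\in L^\fz_b(\mu)$; this is routine but needs the absolute convergence coming from \eqref{1.5}, \eqref{1.3} and the boundedness of $\supp f$. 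The bulk of the argument is otherwise a direct adaptation of the classical Cotlar inequality, with Lemmas \ref{l2.3} and \ref{l2.4} doing the heavy lifting.
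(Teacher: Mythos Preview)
Your proposal has a genuine gap that stems from the non-doubling nature of $\mu$. Recall that the paper's maximal function is
\[
\cm g(x)=\sup_{\rho>0}\frac{1}{\mu(\overline B(x,5\rho))}\int_{\overline B(x,\rho)}|g|\,d\mu,
\]
with the factor $5$ in the denominator. Your argument averages the pointwise inequality
\[
|Tf_2(x)|\lesssim \cm f(x)+|Tf(z)|+|Tf_1(z)|,\qquad z\in B(x,r/2),
\]
over $z\in B(x,r/2)$. If you divide by $\mu(B(x,r/2))$ (so that the left-hand side stays $|Tf_2(x)|$), then the $Tf$-term becomes
\[
\frac{1}{\mu(B(x,r/2))}\int_{B(x,r/2)}|Tf|\,d\mu
=\frac{\mu(\overline B(x,5r/2))}{\mu(B(x,r/2))}\cdot
\frac{1}{\mu(\overline B(x,5r/2))}\int_{B(x,r/2)}|Tf|\,d\mu,
\]
and the ratio $\mu(\overline B(x,5r/2))/\mu(B(x,r/2))$ is \emph{not} bounded without doubling of $\mu$. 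The same obstruction arises in your $Tf_1$-term: Cauchy--Schwarz and $\lt$-boundedness give at best
\[
\Big[\tfrac{\mu(\overline B(x,5r))}{\mu(B(x,r/2))}\Big]^{1/2}\cm_2 f(x),
\]
again with an uncontrolled ratio. If instead you divide by $\mu(\overline B(x,5r/2))$ as you suggest, the left-hand side becomes $|Tf_2(x)|\cdot\mu(B(x,r/2))/\mu(\overline B(x,5r/2))$, which may be much smaller than $|Tf_2(x)|$, so you no longer control $|T_rf(x)|$. In short, the ``classical'' Cotlar averaging you outline is precisely what fails in the non-doubling setting; this is not a technical wrinkle but the heart of the matter.

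The paper's proof inserts an additional, essential step before averaging: a stopping-time argument. One sets $r_j\equiv 5^jr$, $\mu_j\equiv\mu(\overline B(x,r_j))$, and uses the upper doubling bound $\mu_j\le\lz(x,r_j)\le C_\lz^{3j}\lz(x,r)$ to show that there must exist some $j$ with $\mu_{j+1}\le 4C_\lz^6\mu_{j-1}$; taking the smallest such $j=k$ and $R\equiv r_{k-1}$, one obtains the \emph{local} doubling $\mu(\overline B(x,25R))\lesssim\mu(\overline B(x,R))$ at this particular scale. One then (a) controls $|T_rf(x)-T_{5R}f(x)|\lesssim\cm f(x)$ by a telescoping kernel estimate exploiting the minimality of $k$ (see \eqref{3.3}), and (b) carries out your averaging argument at radius $R$ instead of $r/2$, where the ratios above are now bounded. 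The H\"ormander/Lemma~\ref{l2.4} part of your write-up is fine; what you are missing is this selection of a doubling radius, without which the maximal-function comparisons do not close.
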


\begin{proof}
Let $x\in\supp\mu$, $r\in(0,\fz)$, and $r_j\equiv5^j\,r$ and
$\mu_j\equiv\mu(\overline B(x,r_j))$ for
$j\in\zz_+\equiv\nn\cup\{0\}$. We claim that there exists some
$j\in\nn$ such that $\mu_{j+1}\le 4C_\lz^6\mu_{j-1}$, where $C_\lz$
is as in \eqref{1.3}. For otherwise, by \eqref{1.3}, we would have
that for every $j\in\nn$,
\begin{eqnarray*}
\mu_0 <\lf(4C_\lz^6\r)^{-j}\mu_{2j}=\lf(4C_\lz^6\r)^{-j}\mu
\lf(\overline B\lf(x,r_{2j}\r)\r)
\ls\lf(4C_\lz^6\r)^{-j}\lz\lf(x,5^{2j}r\r)\ls5^{-j}\lz(x,r).
\end{eqnarray*}
Letting $j\to0$, we have $\mu (\overline B(x,r))=0$, which contradicts to the fact
that $\mu (\overline B(x,r))>0$ for each $r>0$ and each $x\in\supp\mu$.
 Thus, the claim holds.

Let $k\in\nn$ be the {\it smallest integer} such that
$\mu_{k+1}\le4C_\lz^6\mu_{k-1}$ and $R\equiv r_{k-1}\equiv5^{k-1}r$.
Then we see that
\begin{equation}\label{3.2}
\mu\lf(\overline B\lf(x, 25R\r)\r)\ls \mu\lf(\overline B\lf(x, R\r)\r).
\end{equation}
Observe that for all $j\in\{1,\cdots,k\}$, we have that
$\mu_{j+1}\le(2C_\lz^3)^{j+2-k}\mu_{k}$ and
$$\lz(x,r_k)\le(C_\lz^3)^{\max\{0, k-j-1\}}\lz(x, r_{j+1}).$$
Let $f\in L_b^\infty(\mu)$. From
this, \eqref{1.5}, \eqref{1.3} and the H\"older inequality, we then
deduce that
\begin{eqnarray}\label{3.3}
|T_rf(x)-T_{5R}f(x)|&&\le\dint_{\overline B(x,\,5R)\setminus
\overline B(x,\,r)}|K(x,y)||f(y)|\,d\mu(y)\\
&&=\dsum_{j=1}^k\int_{\overline B(x,\,r_j)\setminus
\overline B(x,\,r_{j-1})}|K(x,y)||f(y)|\,d\mu(y)\nonumber\\
&&\ls\dsum_{j=1}^k\dfrac{\mu(\overline B(x,r_{j+1}))}
{\lz(x, r_{j+1})}\cm(f)(x)\nonumber\\
&&\ls\dsum_{j=1}^k2^{j-k}\cm(f)(x)\ls\cm(f)(x).\nonumber
\end{eqnarray}

Let
$$V_R(x)\equiv\frac{1}{\mu(\overline B(x,R))}\int_{\overline B(x,R)}Tf(y)\,d\mu(y).$$
Then we have
\begin{equation}\label{3.4}
|V_R(x)|\ls\cm(Tf)(x).
\end{equation}
On the other hand, observe that
\begin{eqnarray*}
T_{5R}f(x)&&= \int_{\cx\setminus
\overline B(x,\,5R)}K(x,y)f(y)\,d\mu(y)
=\int_{\cx}K(x, y)\chi_{\cx\setminus \overline B(x,\,5R)}(y)f(y)\,d\mu(y)\nonumber\\
&&=T\lf(f\chi_{\cx\setminus \overline B(x,\,5R)}\r)(x)
=\lf\langle\dz_x, T\lf(f\chi_{\cx\setminus \overline B(x,\,5R)}\r)\r\rangle\nonumber\\
&&=\lf\langle T^{*}\dz_x, f\chi_{\cx\setminus
\overline B(x,\,5R)}\r\rangle=\int_{\cx\setminus
\overline B(x,\,5R)}T^{*}\dz_x(y)f(y)\,d\mu(y)\nonumber,
\end{eqnarray*}
where and in what follows, $\dz_x$ denotes the {\it Dirac measure at} $x$, and
for a linear operator $T$, $T^\ast$ means the {\it adjoint operator of} $T$.
By writing
\begin{eqnarray*}
V_R(x)&&=\frac{1}{\mu(\overline B(x,R))}\int_{\cx}
\chi_{\overline B(x,\,R)}(y)T(f)(y)\,d\mu(y)\\
&&=\frac{1}{\mu(\overline B(x,R))}\int_{\cx}
\chi_{\overline B(x,\,R)}(y)T\lf(f\chi_{\overline B(x,\,5R)}\r)(y)\,d\mu(y)\nonumber\\
&&\hs+\int_\cx T^{*}\lf(\frac{\chi_{\overline B(x,\, R)}}{\mu(\overline B(x,
R))}\r)(y)f(y)\chi_{\cx\setminus \overline B(x,\,5R)}(y)\,d\mu(y),\nonumber
\end{eqnarray*}
we obtain that
\begin{eqnarray}\label{3.5}
|T_{5R}f(x)-V_R(x)|&& \le\lf|\int_{\cx\setminus \overline
B(x,\,5R)}T^\ast\lf(\dz_x-\frac{\chi_{\overline B(x, R)}}
{\mu(\overline B(x,R))}\,d\mu\r)(y)f(y)\,d\mu(y)\r|\\
&&\hs+\lf|\frac{1}{\mu(\overline B(x,R))}\int_\cx\lf[Tf\chi_{
\overline B(x,\,5R)}(y)\r]\chi_{\overline B(x,\,R)}(y)\,d\mu(y)\r|\nonumber\\
&&\equiv {\rm L}_1+{\rm L}_2.\nonumber
\end{eqnarray}
By \eqref{2.5}, we have ${\rm L}_1\ls\cm(f)(x).$
From the H\"older inequality, the boundedness of $T$  on $\lt$ and \eqref{3.2}.
we further deduce that
\begin{eqnarray*}
{\rm L}_2&&\le
\lf[\mu\lf(\overline B\lf(x,R\r)\r)\r]^{-\frac{1}{2}}
\lf[\int_\cx\lf|T\lf(f\chi_{\overline B(x,\,5R)}\r)(y)
\r|^2\,d\mu(y)\r]^{\frac{1}{2}}\\
&&\ls\lf[\mu\lf(\overline B(x,R)\r)\r]^{-\frac{1}{2}}
\lf[\int_{\overline B(x,\,5R)}|f(y)|^2
\,d\mu(y)\r]^{\frac{1}{2}}\ls\cm_2(f)(x).\nonumber
\end{eqnarray*}
Then combining the estimates for ${\rm L}_1$ and ${\rm L}_2$,
and using \eqref{3.5}, \eqref{3.4} and \eqref{3.3},
 we have that for any $r\in(0,\fz)$,
\begin{eqnarray*}
\lf|T_rf(x)\r|&&\le\lf|T_rf(x)-T_{5R}f(x)\r|+\lf|T_{5R}f(x)-V_R(x)\r|+\lf|V_R(x)\r|\\
&&\ls \cm_2(f)(x)+\cm(Tf)(x).
\end{eqnarray*}
Taking the supremum over $r\in(0,\fz)$, we obtain \eqref{3.1}, and
hence complete the proof of Lemma \ref{l3.1}.
\end{proof}

\begin{rem}\label{r3.1}
We point out that if we replace the boundedness of $T$ on $\lt$
in Lemma \ref{l3.1} by the boundedness of $T$ on
$\lq$ for some $q\in (1, \fz)$, then \eqref{3.1} still holds with
$\cm_2$ replaced by $\cm_q$.
\end{rem}

To prove Theorem \ref{t1.1}, we still need to recall the notion
of non-atomic space; see,
for example, \cite{g08}.

\begin{defn}\label{d3.1}\rm
A subset $A$ of a measure space $(\cx,\mu)$ is called an {\it atom} if
$\mu(A)>0$ and each $B\subseteq\,A$ has measure either equal to zero
or equal to $\mu(A)$. A measure space $(\cx,\mu)$ is called {\it
non-atomic} if it contains no atoms.
\end{defn}

\begin{rem}\label{r3.2}\rm
We know from Definition \ref{d3.1} that $\cx$ is non-atomic if and
only if for any $A\subseteq\cx$ with $\mu(A)>0$, there exists a proper
subset $B\subsetneq A$ with $\mu(B)>0$ and $\mu(A\setminus B)>0.$ By
this, it is straightforward that if $\mu(\{x\})=0$ for any $x\in\cx$, then
$(\cx,\mu)$ is a non-atomic space. Moreover, it is known that if
$(\cx, \mu)$ is a non-atomic measure space, then for any sets
$A_0\subseteq A_1\subseteq \cx$ such that $0<\mu(A_1)<\fz$ and
$\mu(A_0)\le t\le \mu(A_1)$ for some $t\in(0, \fz)$, there exists a
set $E$ such that $A_0\subseteq E\subseteq A_1$ and $\mu(E)=t$; see,
for example, \cite[p.\,65]{g08}.
\end{rem}

We say that $\nu$ is an \emph{elementary measure} if it is of the form
\begin{equation*}
   \nu\equiv\sum_{i=1}^N\az_i\dz_{x_i},
\end{equation*}
where $N\in\nn$, $\dz_{x_i}$ is the Dirac measure at some
$x_i\in\cx$ and $\az_i>0$ for $i\in\{1,\cdots,N\}$.
To prove Theorem \ref{t1.1}, we first establish an endpoint estimate for
$T$ on these elementary measures. This generalizes
\cite[Theorem~5.1]{ntv2}, where it was proven for polynomially
bounded measures as in \eqref{1.2} on $\rr^n$.

\begin{thm}\label{t3.1}
Let $T$ be a Calder\'on-Zygmund operator with kernel satisfying
\eqref{1.5} and \eqref{1.6}, which is bounded on $\lt$. Then there exist positive
constants $C_1$ and $C_2$ such that for all elementary measures $\nu$,
\begin{equation}\label{3.6}
\|T\nu\|_{L^{1,\,\infty}(\mu)}\le \lf[C_1+C_2\|T\|_{\lt\to\lt}\r]\|\nu\|.
\end{equation}
\end{thm}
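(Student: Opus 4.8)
The plan is to perform a Calder\'on--Zygmund decomposition of the measure $\nu$ itself, at a threshold tuned to both $\lambda$ and $\|T\|_{\lt\to\lt}$, and then to estimate the ``good'' part by the $L^2$-bound and the ``bad'' part by the H\"ormander-type inequality \eqref{2.6} of Lemma~\ref{l2.4}.

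\emph{Reduction and set-up.} By homogeneity we normalize $\|\nu\|=1$, fix $\lambda>0$, and must bound $\mu(\{x\in\cx:|T\nu(x)|>\lambda\})$ by $(C_1+C_2\|T\|_{\lt\to\lt})\lambda^{-1}$. Put $\theta\equiv\lambda/(1+\|T\|_{\lt\to\lt})$; this particular choice is what will ultimately make the estimate \emph{affine}, rather than quadratic, in $\|T\|_{\lt\to\lt}$. When $\mu(\cx)<\fz$ and $\lambda$ is so small that $\mu(\cx)\le\lambda^{-1}$ the inequality is trivial, so after this routine reduction we may assume that a suitable open set $\Omega$ with $\cm\nu>c_0\theta$ on it, $\supp\nu\subseteq\Omega$, and $\mu(\Omega)\ls\theta^{-1}$ (the last by Lemma~\ref{l2.3}(iii)), is a \emph{bounded, proper} subset of $\cx$. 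Here the non-atomic hypothesis enters: since $\mu(\{x_i\})=0$ we have $\mu(\overline B(x_i,5r))\to0$ as $r\to0$ while $\nu(\overline B(x_i,r))\ge\az_i>0$, so $\cm\nu(x_i)=\fz$; hence $\supp\nu\subseteq\Omega$ and there is no part of $\nu$ outside $\Omega$ to treat.

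\emph{The decomposition.} Apply the Whitney covering Lemma~\ref{l2.2} to $\Omega$ and attach a subordinate partition of unity $\{\phi_k\}_k$ with $\supp\phi_k\subseteq B_k$, $\sum_k\phi_k=1$ on $\Omega$, and finite overlap. Put $m_k\equiv\int_\cx\phi_k\,d\nu\le\nu(B_k)$. Using that $\cm\nu$ is $\le c_0\theta$ at a point sitting at a controlled distance from $B_k$, one bounds $m_k$ by $\theta$ times the $\mu$-measure of a fixed dilate $D_k$ of $B_k$; then the non-atomicity of $\mu$ (Remark~\ref{r3.2}: one can find a subset of prescribed measure $\le\mu(D_k)$ inside $D_k$) lets us pick $E_k\subseteq D_k$ with $\mu(E_k)=m_k/\theta$, and set $w_k\equiv\theta\,\mu|_{E_k}$. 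Then $b_k\equiv\phi_k\nu-w_k$ has total mass $0$, $\supp b_k$ contained in a fixed dilate $B_k^\ast$ of $B_k$, and $\|b_k\|\le2m_k$; the remainder $g\equiv\nu-\sum_k b_k=\sum_k w_k$ satisfies $g\le C\theta\,\mu$ (finite overlap) and $\|g\|=\sum_k m_k\ls1$.

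\emph{Good and bad parts.} Writing $g=G\,d\mu$ with $0\le G\le C\theta$, we get $\|G\|_\lt^2\le C\theta\|G\|_\lo\ls\theta$, hence $\|Tg\|_\lt^2\le C\theta\,\|T\|_{\lt\to\lt}^2$, and by Chebyshev $\mu(\{|Tg|>\lambda/2\})\ls\theta\,\|T\|_{\lt\to\lt}^2/\lambda^2\ls\|T\|_{\lt\to\lt}/\lambda$ by the choice of $\theta$. For the bad part, set $\Omega^\ast\equiv\cup_k 2B_k^\ast$; one arranges the construction so that $\Omega^\ast\subseteq\Omega$ (or at least $\Omega^\ast\subseteq\{\cm\nu>c_1\theta\}$), whence $\mu(\Omega^\ast)\ls\theta^{-1}$. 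Since each $b_k$ has vanishing integral and support in $B_k^\ast$, Lemma~\ref{l2.4} gives $\int_{\cx\setminus2B_k^\ast}|Tb_k|\,d\mu\ls\|b_k\|$, so $\int_{\cx\setminus\Omega^\ast}|Tb|\,d\mu\le\sum_k\int_{\cx\setminus2B_k^\ast}|Tb_k|\,d\mu\ls\sum_k m_k\ls1$, and hence $\mu(\{x\notin\Omega^\ast:|Tb(x)|>\lambda/2\})\ls\lambda^{-1}$. Adding the three contributions, $\mu(\{|T\nu|>\lambda\})\le\mu(\Omega^\ast)+\mu(\{|Tg|>\lambda/2\})+\mu(\{x\notin\Omega^\ast:|Tb|>\lambda/2\})\ls\theta^{-1}+\|T\|_{\lt\to\lt}/\lambda+\lambda^{-1}\ls(1+\|T\|_{\lt\to\lt})/\lambda$, which is \eqref{3.6}. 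The main obstacle is precisely the geometric bookkeeping behind the decomposition in the non-homogeneous setting: because \eqref{1.5} is governed by $\lz(x,d(x,y))$ rather than by $d(x,y)$ alone, balls cannot be dilated freely and $\mu(2B)$ need not be comparable to $\mu(B)$, so one must reconcile simultaneously (i) $\supp b_k\subseteq B_k^\ast$ with $\|b_k\|\ls m_k$, (ii) $\sum_k w_k\le C\theta\mu$, and (iii) $\mu(\cup_k 2B_k^\ast)\ls\theta^{-1}$. Managing this forces one to work with doubling-type balls (or otherwise to exploit the slow $C_\lz$-controlled growth of $\lz$), to use the regularity \eqref{1.4}, and to invoke the covering Lemma~\ref{l2.1}; once it is in place, the analytic part above is routine.
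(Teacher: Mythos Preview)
Your approach is genuinely different from the paper's. The paper, following Nazarov--Treil--Volberg, does \emph{not} perform a Calder\'on--Zygmund decomposition at all. It exploits the finitely many point masses of $\nu=\sum_i\alpha_i\delta_{x_i}$ directly: for each $x_i$ it uses non-atomicity to build (pairwise disjoint) sets $E_i\subseteq\overline B(x_i,\rho_i)$ with $\mu(E_i)=\alpha_i/t$, and compares $T\nu$ on $\cx\setminus E$ to $t\sigma$ where $\sigma=\sum_i\chi_{\cx\setminus\overline B(x_i,2\rho_i)}T(\chi_{E_i}\,d\mu)$. The difference is controlled by \eqref{2.6}, as in your bad part; but the level-set bound $\mu(\{|\sigma|>C_6\})\le 2/t$ comes from a \emph{duality trick}: test $\sigma$ against $\chi_F$ for an arbitrary set $F$ with $\mu(F)=1/t$, rewrite the pairing as $\sum_i\int_{E_i}T^*\chi_{F\setminus\overline B(x_i,2\rho_i)}$, and bound this by the Cotlar inequality (Lemma~\ref{l3.1}) for $T^*$ together with the $L^2$-boundedness of $T^*$ and $\cm$. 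This sidesteps Whitney covers entirely and never needs to control the $\mu$-measure of dilated balls. Your route is essentially the Tolsa/Anh--Duong strategy that the paper itself cites as an independent alternative.

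The gap in your proposal is exactly the point you yourself label ``the main obstacle'': the geometric bookkeeping is asserted rather than done, and in the non-doubling setting it is the whole difficulty. Concretely, the only control you have on $m_k\le\nu(B_k)$ comes from a point $z_k\in 3B_k\setminus\Omega$ where $\cm\nu(z_k)\le c_0\theta$, which yields $m_k\le c_0\theta\,\mu(\overline B(z_k,20r_k))$; thus $E_k$ must sit in a ball of radius $\sim 20r_k$, forcing $B_k^\ast$ and hence $\Omega^\ast=\cup_k 2B_k^\ast$ to involve dilates $\sim 40B_k$. You then need $\mu(\Omega^\ast)\ls\theta^{-1}$, but without doubling there is no bound on $\mu(40B_k)$ in terms of $\mu(B_k)$, and the inclusion $\Omega^\ast\subseteq\{\cm\nu>c_1\theta\}$ you hope for fails in general: for $y\in 40B_k\setminus\Omega$ there is no mechanism forcing $\cm\nu(y)$ to be large. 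Resolving this requires replacing the raw Whitney balls by carefully selected \emph{doubling} balls (this is the substantive content of the Anh--Duong decomposition) and cannot be waved through. A secondary issue: $\{\cm\nu>c_0\theta\}$ need not be bounded (there is no lower growth assumption on $\mu$), so Lemma~\ref{l2.2} is not directly applicable; the reduction you call ``routine'' is not.
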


\begin{proof}
Without loss of generality, we may normalize $\nu$ such that
$\|\nu\|=\sum_{i=1}^N\az_i=1$, and hence we only need prove
\begin{equation}\label{3.7}
\|T\nu\|_{\wlo}\le C_1+C_2\|T\|_{\lt\to\lt}.
\end{equation}
Since for $t\in(0, 1/\mu(\cx)]$, we
have
$$t\mu(\{x\in\cx:\,|T\nu(x)|>t\})\le t\mu(\cx)\le1.$$
Therefore it remains to consider the case $t\in(1/\mu(\cx), \fz)$.
Let $\overline B(x_1,\rho_1)$  be the
{\it smallest closed ball} such that $\mu(\overline B(x_1, \rho_1))\ge\az_1/t$.
Indeed, since the function $\rho\to\mu(\overline B(x,\rho))$ is
increasing and continuous from the right, and greater than $1/t\ge\az_1/t$ for
sufficiently large $\rho>0$, such $\rho_1$ exists and is strictly
positive. Then
$$\mu(B(x_1,\rho_1))=\lim\limits_{\rho\to\rho_1-0}
\mu(\overline B(x_1,\rho))\le\frac{\az_1}{t}.$$
Since $(\cx,\mu)$ is non-atomic, by Remark \ref{r3.2},
we can find a Borel set $E_1$ such that
$$B(x_1,\rho_1)\subseteq E_1\subseteq
\overline B(x_1,\rho_1)$$
and $\mu(E_1)=\frac{\az_1}{t}$.

Let $\overline B(x_2,\rho_2)$ be the {\it smallest closed ball} such that
$\mu(\overline B(x_2,\rho_2)\setminus E_1)\ge\az_2/t.$ Similarly, for the
corresponding open ball $B(x_2,\rho_2)$, we have
$\mu(B(x_2,\rho_2)\setminus E_1)\le\az_2/t$ and henceforth find a
Borel set $E_2$ with the property:
$$\lf(B(x_2,\rho_2)\setminus E_1\r)\subseteq E_2\subseteq
\lf(\overline B(x_2,\rho_2)\setminus E_1\r)$$
and $\mu(E_2)=\frac{\az_2}{t}$.

Repeating the process, for $i\in\{3,\cdots,N\}$, we have $\overline
B(x_i, \rho_i)$ and $E_i$ such that $\overline B(x_i,\rho_i)$ is the
{\it smallest closed ball} satisfying that $\mu(\overline
B(x_i,\rho_i)\setminus\bigcup_{l=1}^{i-1}E_l)\ge\az_i/t$,
$$\lf(B(x_i,\rho_i)\setminus\bigcup
\limits_{l=1}^{i-1}E_l\r)\subseteq E_i\subseteq \lf(\overline
B(x_i,\rho_i)
\setminus\bigcup\limits_{l=1}^{i-1}E_l\r)$$
and $\mu(E_i)=\frac{\az_i}{t}$. Let $E\equiv\bigcup_{i=1}^N E_i$. Then
by the fact that $\sum_{i=1}^N\az_i=1$ together with the choices of
$\{B(x_i, \rho_i)\}_{i=1}^N$ and $\{E_i\}_{i=1}^N$, we see that
$$\bigcup\limits_{i=1}^N B(x_i,\rho_i)\subseteq E\subseteq
\bigcup\limits_{i=1}^N\overline
B(x_i,\rho_i)$$
and $\mu(E)=\frac{1}{t}$.

Outside $E$, let us compare $T\nu$ to $t\sigma$, where
\begin{equation*}
  \sigma\equiv \sum_{i=1}^N\chi_{\cx\setminus \overline
B(x_i,\,2\rho_i)} T(\chi_{E_i}\,d\mu).
\end{equation*}
We have
\begin{eqnarray}\label{3.8}
T\nu-t\sz&& =T\lf(\sum\limits_{i=1}^N\az_i\dz_{x_i}\r)
-t\sum\limits_{i=1}^N\chi_{\cx\setminus
\overline B(x_i,\,2\rho_i)}T(\chi_{E_i}\,d\mu)\\
&&=\sum\limits_{i=1}^N\lf[\az_i\,T\delta_{x_i}-
t\chi_{\cx\setminus\overline B(x_i,\,2\rho_i)}
T(\chi_{E_i}\,d\mu)\r]\equiv\sum\limits_{i=1}^N\fai_i.\nonumber
\end{eqnarray}
Notice that for any $i$,
\begin{eqnarray}\label{3.9}
&&\dint_{\cx\setminus E}|\fai_i(x)|\,d\mu(x)\\
&&\hs=\dint_{\cx\setminus\bigcup\limits_{i=1}^{N}E_i}\lf|\az_i\,T\dz
_{x_i}(x)- t\chi_{\cx\setminus\overline B(x_i,\,2\rho_i)}(x)
T(\chi_{E_i}\,d\mu)(x)\r|\,d\mu(x)\noz\\
&&\hs\le\dint_{\cx\setminus \overline
B(x_i,\,2\rho_i)}\lf|\az_i\,T\dz_{x_i}(x)-t
\chi_{\cx\setminus \overline B(x_i,\,2\rho_i)}(x)
T(\chi_{E_i}\,d\mu)(x)\r|\,d\mu(x)\noz\\
&&\hs\hs+\dint_{\overline B(x_i,\,2\rho_i) \setminus
B(x_i,\,\rho_i)}\cdots\noz\\
&&\hs=\dint_{\cx\setminus \overline B
(x_i,\,2\rho_i)}\lf|T(\az_i\dz_{x_i}-t\chi_{E_i}\,d\mu)(x)\r|\,d\mu(x)\noz\\
&&\hs\hs+\dint_{\overline B(x_i,\,2\rho_i) \setminus
B(x_i,\,\rho_i)}\az_i|T\dz_{x_i}(x)| \,d\mu(x)\equiv {\rm J}_1+{\rm J}_2.\noz
\end{eqnarray}
For each $i$, using \eqref{2.6} and
$\mu(E_i)=\frac{\az_i}t$, we see that
$${\rm J}_1\ls\|\az_i\dz_{x_i}-t\chi_{E_i}\,d\mu\|\ls\az_i.$$
Moreover, from \eqref{1.5}, \eqref{1.4} and \eqref{1.3}, we deduce
that
\begin{eqnarray*}
{\rm J}_2&&\ls\int_{\overline
B(x_i,\,2\rho_i)\setminus B(x_i,\,\rho_i)}
\dfrac{\az_i}{\lz(x, d(x, x_i))}\,d\mu(x)\\
&&\ls\int_{\overline B(x_i,\,2\rho_i)\setminus
B(x_i,\,\rho_i)}\dfrac{\az_i}{\lz(x_i, d(x, x_i))}\,d\mu(x)
\ls\az_i\dfrac{\mu(\overline B(x_i,\,2\rho_i))}{\lz(x_i,
\rho_i)}\ls\az_i.
\end{eqnarray*}
By the estimates of ${\rm J}_1$ together with ${\rm J}_2$ and \eqref{3.9}, we
obtain that $\int_{\cx\setminus E}|\varphi_i|\,d\mu\ls\az_i,$
which, together with \eqref{3.8} and the fact that $\sum_{i=1}^N\az_i=1,$
further implies that there exists a
positive constant $C_3$ such that
\begin{equation}\label{3.10}
\int_{\cx\setminus E}|T\nu(x)-t\sz(x)|\,d\mu(x)\le\sum_{i=1}^N
\int_{\cx\setminus E}|\fai_i(x)|\,d\mu(x)\le C_3.
\end{equation}

Via \eqref{3.10}, to accomplish the proof of Theorem \ref{t3.1}, it
suffices to show that there exist positive constants $C_4 $ and
$C_5$ such that  $C_6 \equiv C_4 +C_5\|T\|_{\lt\to\lt}$ satisfying
\begin{equation}\label{3.11}
\mu(\{x\in\cx:\,\, |\sigma(x)|>C_6 \})\le \frac2t.
\end{equation}
Indeed, assume that \eqref{3.11} holds for the moment. Then from
$\mu(E)=\frac1t$, \eqref{3.10} and \eqref{3.11}, we deduce that
\begin{eqnarray*}
&&\mu\lf(\lf\{x\in\cx:\,|T\nu(x)|>(C_3+C_6 )t\r\}\r)\\
&&\hs\le\mu\lf(\{x\in\cx\setminus E:\,|T\nu(x)|>(C_3+C_6 )t\}\r)+\mu(E)\\
&&\hs\le\mu\lf(\{x\in\cx\setminus E:\,|T\nu(x)-t\sz(x)|>C_3t\}\r)\\
&&\hs\hs+\mu\lf(\{x\in\cx:\,|\sz(x)|>C_6\}\r)+\mu(E)\le \frac4t.
\end{eqnarray*}
This implies \eqref{3.7}, and hence finishes the proof of Theorem
\ref{t3.1}, up to the verification of \eqref{3.11},
which we do in the following lemma.
\end{proof}

\begin{lem}
The estimate \eqref{3.11} holds.
\end{lem}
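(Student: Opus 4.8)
The plan is to obtain \eqref{3.11} by splitting $\sigma$ into a ``large-scale'' part, controlled by the $L^2$-boundedness of $T$, and a ``local'' remainder, controlled by the geometry of the sets $E_i$ together with the maximal function estimates of Lemma \ref{l2.3}. Since $\mu(E)=\tfrac1t$, the first move is to discard $E$ itself: it suffices to prove that $\mu(\{x\in\cx\setminus E:\ |\sigma(x)|>C_6\})\le\tfrac1t$, split into two halves each $\le\tfrac1{2t}$.

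The key structural remark, immediate from the construction, is that $B(x_i,\rho_i)\setminus\bigcup_{l<i}E_l\subseteq E_i$ for every $i$, whence $B(x_i,\rho_i)\subseteq\bigcup_{l\le i}E_l\subseteq E$. Consequently, if $x\in\cx\setminus E$ and $x\in\overline B(x_i,2\rho_i)$, then necessarily $\rho_i\le d(x,x_i)\le2\rho_i$, i.e.\ $x$ lies in the annulus $\overline B(x_i,2\rho_i)\setminus B(x_i,\rho_i)$. Writing $\chi_{\cx\setminus\overline B(x_i,2\rho_i)}=1-\chi_{\overline B(x_i,2\rho_i)}$ and summing (the sum is finite), for $x\in\cx\setminus E$ we get
$$\sigma(x)=T(\chi_E\,d\mu)(x)-\mathrm{Err}(x),\qquad \mathrm{Err}(x)\equiv\sum_{i:\,x\in\overline B(x_i,2\rho_i)\setminus B(x_i,\rho_i)}\ \dint_{E_i}K(x,y)\,d\mu(y),$$
each inner integral being absolutely convergent since for such $x$ one has $d(x,y)\ge d(x,x_i)-\rho_i>0$ while $y\in E_i$ stays inside $\overline B(x_i,\rho_i)$.

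For the large-scale part, the $L^2$-boundedness of $T$ gives $\|T(\chi_E\,d\mu)\|_{\lt}^2\le\|T\|_{\lt\to\lt}^2\,\mu(E)=\|T\|_{\lt\to\lt}^2/t$, and the Chebyshev inequality yields $\mu(\{|T(\chi_E\,d\mu)|>C_6/2\})\le 4\|T\|_{\lt\to\lt}^2/(tC_6^{\,2})$. Taking $C_6\ge C_5\|T\|_{\lt\to\lt}$ with $C_5$ a large enough absolute constant makes this at most $\tfrac1{2t}$; this is the only place the $L^2$-bound is used, and it enters linearly into $C_6$, as required by \eqref{3.6}.

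The remaining and main point is to show $\mu(\{x\in\cx\setminus E:\ |\mathrm{Err}(x)|>C_6/2\})\le\tfrac1{2t}$ for a suitably large absolute $C_4$ (hence $C_6$). Here \eqref{1.5} and the $\lambda$-regularity \eqref{1.4} only give $|\mathrm{Err}(x)|\ls\int_{A(x)}[\lambda(x,d(x,y))]^{-1}\,d\mu(y)$ with $A(x)\equiv\bigcup\{E_i:\ x\in\overline B(x_i,2\rho_i)\}\subseteq E$, and, because the kernel is governed by $\lambda(x,d(x,y))$ rather than by $d(x,y)$ alone, this integral is not absolutely summable in any obvious way near the boundary of $E$. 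The plan is to organize the $i$-sum and the $y$-integral into dyadic shells in $d(x,x_i)\sim\rho_i$ and in $d(x,y)$, and then to use: the disjointness of the $E_i$ (so $\sum_i\mu(E_i\cap F)\le\mu(E\cap F)$ for every Borel $F$, hence $\sum_i\mu(E_i)=\tfrac1t$); the upper doubling \eqref{1.3} to compare $\lambda(x,\cdot)$ on adjacent shells; the geometrical doubling (Remark \ref{r1.2}) to bound, at each fixed scale, the number of annuli $\overline B(x_i,2\rho_i)\setminus B(x_i,\rho_i)$ meeting $x$; and the weak type $(1,1)$ estimates of Lemma \ref{l2.3} for $\cm$ and for $\cm(\chi_E\,d\mu)$ to remove, at the cost of a set of $\mu$-measure $\ls\tfrac1t$, the points at which $A(x)$ is ``too dense'' near $x$, so that on the complement the dyadic sum telescopes to a geometric series with absolute constant. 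Combining the three contributions — the set $E$, the large-scale part, and $\mathrm{Err}$ — gives $\mu(\{|\sigma|>C_6\})\le\tfrac2t$ with $C_6=C_4+C_5\|T\|_{\lt\to\lt}$. I expect this last estimate to be the genuine obstacle: the subtlety is precisely in replacing the scalar ``polynomial bound $d(x,y)^{-\kz}$'' of Nazarov--Treil--Volberg by the dominating function $\lambda$, which is what forces the summation to lean on \eqref{1.3}, \eqref{1.4}, the geometrical doubling, and the non-atomicity used to build the $E_i$.
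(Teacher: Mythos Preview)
Your decomposition $\sigma=T(\chi_E)-\mathrm{Err}$ on $\cx\setminus E$ is correct, and the $T(\chi_E)$ part is handled fine by Chebyshev and the $L^2$-bound. The gap is in the $\mathrm{Err}$ estimate, and the tools you list do not close it.

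The specific failure is the appeal to geometrical doubling ``to bound, at each fixed scale, the number of annuli $\overline B(x_i,2\rho_i)\setminus B(x_i,\rho_i)$ meeting $x$.'' The centres $x_i$ are the atoms of the \emph{given} elementary measure $\nu$; they carry no separation whatsoever (indeed they may all coincide), so Remark~\ref{r1.2} gives no control on this overlap. The remaining tools---disjointness of the $E_i$ and the weak $(1,1)$ bound for $\cm(\chi_E\,d\mu)$---only yield $\mu(E\cap\overline B(x,r))\ls c\,\lz(x,5r)$ off a small exceptional set, and feeding this into the obvious dyadic expansion
\[
|\mathrm{Err}(x)|\ \ls\ \sum_{k}\sum_{j\le k+O(1)}\frac{\mu(E\cap\overline B(x,2^{j+1}))}{\lz(x,2^j)}
\]
produces a double sum with no decay: each $j$-term is $\ls c$, and the number of relevant $k$'s above $j$ is unbounded. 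There is also a secondary issue you gloss over: for $x$ on the sphere $\{d(\cdot,x_i)=\rho_i\}$ the integral $\int_{E_i}K(x,y)\,d\mu(y)$ need not converge absolutely (your inequality $d(x,y)\ge d(x,x_i)-\rho_i>0$ is only $\ge 0$), so ``$\mathrm{Err}$'' is a difference of $L^2$-functions rather than a kernel integral, and pointwise dyadic estimates are not a priori available there.

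The paper avoids the overlap problem by \emph{dualising}: instead of splitting $\sigma$, one tests it against $\chi_F$ for an arbitrary Borel set $F$ with $\mu(F)=1/t$, writes
\[
\int_\cx \sigma\,\chi_F\,d\mu=\sum_i\int_{E_i}T^{*}\!\bigl(\chi_{F\setminus\overline B(x_i,2\rho_i)}\bigr)\,d\mu,
\]
and observes that for $x\in E_i\subseteq\overline B(x_i,\rho_i)$ one has $\bigl|T^{*}\chi_{F\setminus\overline B(x_i,2\rho_i)}(x)\bigr|\ls 1+(T^{*})^{\sharp}\chi_F(x)\ls 1+\cm(T^{*}\chi_F)(x)$ by a trivial annulus comparison and the Cotlar inequality (Lemma~\ref{l3.1}). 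Because the $E_i$ are \emph{disjoint}, the $i$-sum is simply an integral over $E$; H\"older together with the $L^2$-boundedness of $\cm$ and $T^{*}$ gives $\bigl|\int\sigma\chi_F\bigr|\le(C_4+C_5\|T\|_{\lt\to\lt})/t$. A final contradiction argument (choose $F$ inside $\{\mathrm{Re}\,\sigma>C_6\}$ or its variants) then yields \eqref{3.11}. The moral: moving to the adjoint side puts the ``bad'' sum over $i$ on the \emph{domain} of integration, where disjointness of the $E_i$ does the work that geometrical doubling cannot do on the range side.
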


\begin{proof}
We first claim that
there exist $C_4 $ and $C_5$ such that  for any set $F$ with
$\mu(F)=\frac1t$,
\begin{equation}\label{3.12}
\lf|\int_\cx\sz(x)\chi_F(x)\,d\mu(x)\r|\le
\frac{1}{t}\lf[C_4 +C_5\|T\|_{\lt\to\lt}\r].
\end{equation}
Indeed, let $F$ be such a set. Then the definition of $\sigma$ gives us that
\begin{eqnarray}\label{3.13}
\dint_\cx\sz(x)\chi_F(x)\,d\mu(x)
&&=\sum_{i=1}^N\int_\cx T\chi_{E_i}(x)
\chi_{F\setminus\overline B(x_i,\,2\rho_i)}(x)\,d\mu(x)\\
&&=\sum_{i=1}^N\int_\cx\chi_{E_i}(x) T^{*}
\chi_{F\setminus\overline
B(x_i,\,2\rho_i)}(x)\,d\mu(x).\nonumber
\end{eqnarray}
From \eqref{1.4} and \eqref{1.3}, it follows that for all
$x\in E_i\subseteq\overline B(x_i, \rho_i)$ and $y\in\overline
B(x_i,\,2\rho_i)\setminus\overline B(x,\,\rho_i)$, $\lz(x_i, \rho_i)\ls
\lz(y, d(x, y))$, which, together with \eqref{1.5} and \eqref{1.4},
further implies that for all $x\in E_i\subseteq\overline B(x_i,
\rho_i)$,
 \begin{eqnarray*}
\lf|T^{*}\chi_{F\setminus\overline B(x_i,\,2\rho_i)}(x)-T^{*}
\chi_{F\setminus\overline B(x,\,\rho_i)}(x)\r|&&\le
\int_{\overline B(x_i,\,2\rho_i)\setminus
\overline B(x,\,\rho_i)}|K(y,x)|\,d\mu(y)\\
&&\ls\int_{\overline B(x_i,\,2\rho_i)\setminus
\overline B(x,\,\rho_i)}\dfrac1{\lz(y, d(x, y))}\,d\mu(y)\\
&&\ls\frac{\mu(\overline B(x_i, 2\rho_i))} {\lz(x_i,\rho_i)}\ls1.
\end{eqnarray*}
This combined with the fact that $T^\ast\chi_{F\setminus\overline B(x,
\,\rho_i)}(x) \le (T^\ast)^\sharp\chi_{F}(x)$ and Lemma \ref{l3.1}  yields
that for all $x\in
E_i\subseteq\overline B(x_i, \rho_i)$,
\begin{eqnarray*}
\lf|T^{*}\chi_{F\setminus\overline B(x_i,\,2\rho_i)}(x)\r|
&&\le\lf|T^{*}\chi_{F\setminus\overline B(x_i,\,2\rho_i)}(x)-T^{*}
\chi_{F\setminus\overline B(x,\,\rho_i)}(x)\r|
+\lf|T^{*}\chi_{F\setminus\overline B(x,\,\rho_i)}(x)\r|\\
&&\ls 1+(T^{*})^{\sharp}\chi_{F}(x)\ls1+\cm(T^{*}\chi_F)(x).
\end{eqnarray*}
Furthermore, by this, \eqref{3.13}, $E=\bigcup_{i=1}^N E_i$
(disjoint union) and $\mu(E)=\frac1t$,
we have that
\begin{eqnarray}\label{3.14}
\lf|\int_\cx\sz(x)\chi_F(x)\,d\mu(x)\r|&&
\le\dsum_{i=1}^N\lf|\int_\cx\chi_{E_i}(x)
\lf[T^{*}\chi_{F\setminus \overline B(x_i,\,2\rho_i)}\r](x)\,d\mu(x)\r|\\
&&\ls\dsum_{i=1}^N\int_\cx
\chi_{E_i}(x)[1+\cm(T^{*}\chi_F)(x)]\,d\mu(x)\nonumber\\
&&\sim\frac1t+\int_\cx\chi_E(x)\cm(T^{*}\chi_F)(x)\,d\mu(x).\nonumber
\end{eqnarray}
Since $T$ is bounded on $\lt$, by duality, we see that $T^\ast$ is also
bounded on $\lt$ and
$$\|T^{*}\|_{\lt\to\lt}=\|T\|_{\lt\to\lt}.$$
From this fact, Lemma \ref{l2.3}(i),
$\mu(F)=\frac1t=\mu(E)$ and the H\"older inequality, we further deduce that
\begin{eqnarray*}
\int_\cx\chi_E(x)\cm(T^{*}\chi_F)(x)\,d\mu(x)
&&\le\|\chi_E\|_\lt\|\cm(T^{*}\chi_F)\|_\lt\\
&&\le\|\chi_E\|_\lt\|\cm\|_{\lt\to\lt}\|T^{*}\|_{\lt\to\lt}\|\chi_F\|_\lt\\
&&=\frac{1}{t}\|\cm\|_{\lt\to\lt}\|T\|_{\lt\to\lt},
\end{eqnarray*}
which together with \eqref{3.14} gives that there exist $C_4 $ and $C_5$
satisfying \eqref{3.12}. Therefore the claim \eqref{3.12} holds.

Suppose that $\mu(\lf\{x\in\cx:\,|\sz(x)|>C_6 \r\})>2/t$. Then either
\begin{equation}\label{3.15}
\mu\lf(\lf\{x\in\cx :\,\sz(x)>C_6 \r\}\r)>\frac{1}{t}
\end{equation}
or
\begin{equation*}
\mu\lf(\lf\{x\in\cx :\,\sz(x)<-C_6 \r\}\r)>\frac{1}{t}.
\end{equation*}
Without loss of generality,  we may only consider \eqref{3.15} by
similarity. Pick some set $F\subseteq\cx$ with $\mu(F)=1/t$ such that
$\sz(x)>C_6 $ everywhere on $F$ (such $F$ exists because of Remark
\ref{r3.2}). Then apparently,
\begin{equation}\label{3.16}
\int_\cx\sz(x)\chi_F(x)\,d\mu(x)>\frac{C_6 }{t}.
\end{equation}
Thus, we get a contradiction by combining \eqref{3.12} with \eqref{3.16}, which
implies \eqref{3.11}, and hence completes the proof of Lemma 3.2.
\end{proof}

\begin{rem}\label{r3.3}\rm
(i) Theorem \ref{t3.1} also holds with finite linear combinations of
Dirac measures with arbitrary real coefficients. Indeed, every such
measure $\nu$ can be represented as $\nu=\nu_+-\nu_-$, where $\nu_+$ and
$\nu_-$ are finite linear combinations of Dirac measures with
positive coefficients and $\|\nu\|=\|\nu_+\|+\|\nu_-\|$. Therefore,
$\|T\nu\|_{\wlo}\le 2(C_1+C_2\|T\|_{\lt\to\lt})\|\nu\|$.

(ii) If we replace the assumption of
Theorem \ref{t3.1} that $T$ is bounded on $\lt$ by that $T$ is bounded
on $\lq$ for some $q\in(1, \infty)$, then via a slight modification of the
proof Theorem \ref{t3.1}, we have \eqref{3.6} with $\|T\|_{\lt\to\lt}$ replaced by
$\|T\|_{\lq\to\lq}$.
\end{rem}

\begin{proof}[Proof of Theorem \ref{t1.1}, Part I]
In this part, we show that (i) of Theorem \ref{t1.1}
implies (ii) and (iii) of Theorem \ref{t1.1} and
that (ii) of Theorem \ref{t1.1} implies (iii) of Theorem \ref{t1.1}.

We first assume that (i) holds and show that (ii) and (iii) hold.
By the Marcinkiewicz interpolation theorem and a duality argument, we
obtain (ii) via (iii). Therefore, we only need to prove (iii).
To this end, observe that for any $f\in\lo$, $f=f^+-f^-$,
where $f^+\equiv\max\{f, 0\}\ge 0$ and $f^-\equiv\max\{-f, 0\}\ge 0$.
Moreover, if let $C_b(\cx)$ be the {\it space of all
continuous functions with bounded support}, by \cite[Proposition 3.4]{h10}
and its proof, we see that for any $f\in\lo$ and $f\ge 0$,
there exist $\{f_j\}_{j\in\nn}\subseteq C_b(\cx)$ and $f_j\ge 0$ for
all $j\in\nn$ such that $\|f_j-f\|_\lo\to 0$ as $j\to\infty$.
By these observations combining with the linear property of $T$, we see that
to show (iii), it suffices to prove that \eqref{1.7} holds
for all $f\in C_b(\cx)$ and $f\ge 0$.

Let $t>0$, $G\equiv\{x\in\cx:\, f(x)>t\}$, $f^t\equiv
f\chi_G$ and $f_t\equiv f\chi_{\cx\setminus G}$. Then
$Tf=Tf^t+Tf_t$. Notice that
$$\int_\cx[f_t(x)]^2\,d\mu(x)\le t\int_\cx f_t(x)\,d\mu(x)\le
t\|f\|_{\lo}.$$
This and the boundedness of $T$ on $\lt$ yield that
$$\int_\cx|Tf_t(x)|^2\,d\mu(x)\le\|T\|_{\lt\to\lt}^2t\|f\|_\lo,$$
which implies that
\begin{equation}\label{3.17}
\mu\lf(\lf\{x\in\cx:\, |Tf_t(x)|>t\|T\|_{\lt\to\lt}\r\}\r)\le
\dfrac{\|f\|_{\lo}}{t}.
\end{equation}

We now estimate $Tf^t$. Since, by $f\in C_b(\cx)$,
$G$ is a bounded open set, by Lemma
\ref{l2.2}, there exists a sequence $\{B_i\}_i$ of
balls with finite overlap such that $G=\cup_i B_i$ and $2B_i\subseteq G$
for all $i$. Without loss of generality, we may assume
the cardinality of $\{B_i\}_i$ is just $\nn$.
Then the fact that $\{B_i\}_{i\in\nn}$ has the finite overlap
implies that
\begin{equation*}
  f^t=\sum_{i\in\nn} f\frac{\chi_{B_i}}{\sum_{j\in\nn}\chi_{B_j}}
  \equiv\sum_{i\in\nn}f_i.
\end{equation*}
Then it is easy to see that $f_i\ge 0$ for all $i\in\nn$.
For any $N\in \nn$ and $i\in\{1,\,2,\,\cdots, N\}$,
define $f^{(N)}\equiv\sum_{i=1}^{N}f_i$ and
$$\az_i\equiv\dint_\cx f_i(y)\,d\mu(y)=\dint_{B_i}f(y)\,d\mu(y).$$
Then $\az_i\ge 0$ for all $i\in\nn$. By $G=\cup_{i\in\nn} B_i$
and the finite overlap property of $\{B_i\}_{i\in\nn}$, we have
\begin{equation}\label{3.18}
\sum_{i=1}^{\fz}\az_i\le\sum_{i=1}^{\fz}
\int_{B_i}f(y)\,d\mu(y)\ls\int_G f(y)\,d\mu(y)\ls\|f\|_{L^1(\mu)}.
\end{equation}
 Pick $x_i\in B_i$ and define $\nu^{(N)}\equiv\sum_{i=1}^N\az_i\dz_{x_i}$.
 We obtain that $\|\nu^{(N)}\|=\sum_{i=1}^N\az_i$.
By \eqref{3.18}, the fact that $2B_i\subseteq G$ for all $i\in\nn$ and
\eqref{2.6}, there exists a positive constant $C_7$ such that
\begin{eqnarray}\label{3.19}
&&\dint_{\cx\setminus G}\lf|Tf^{(N)}(x)-T\nu^{(N)}(x)\r|\,d\mu(x)\\
&&\hs=\dint_{\cx\setminus G}\lf|T\lf(\dsum_{i=1}^{N}[f_i\,d\mu-
\az_i\dz_{x_i}]\r)(x)\r|\,d\mu(x)\nonumber\\
&&\hs\le\dsum_{i=1}^{N}\dint_{\cx\setminus 2B_i}\lf|T(f_i\,d\mu-
\az_i\dz_{x_i})(x)\r|\,d\mu(x)\noz\\
&&\hs\ls\dsum_{i=1}^{N}\az_i\le C_7\|f\|_{L^1(\mu)}.\noz
\end{eqnarray}

On the other hand, by Theorem \ref{t3.1}, we see that
\begin{equation*}
\mu\lf(\lf\{x\in\cx:\,\lf|T\nu^{(N)}(x)\r|>
(C_1+C_2\|T\|_{\lt\to\lt})t\r\}\r)\le\frac{1}{t}\lf\|\nu^{(N)}
\r\|\le\frac{1}{t}\|f\|_{\lo},
\end{equation*}
from which together with \eqref{3.19}, we deduce that
\begin{eqnarray*}
&&\mu\lf(\lf\{x\in\cx\setminus G:\,\lf|Tf^{(N)}(x)\r|>\lf(C_7+C_1+C_2
\|T\|_{\lt\to\lt}\r)t\r\}\r)\\
&&\hs\le\mu\lf(\lf\{x\in\cx\setminus G:\
\lf|Tf^{(N)}(x)-T\nu^{(N)}(x)\r|>C_7t\r\}\r)\\
&&\hs\hs+\mu\lf(\lf\{x\in\cx\setminus G:\,
\lf|T\nu^{(N)}(x)\r|>(C_1+C_2\|T\|_{\lt\to\lt})t\r\}\r)\le\frac{2}{t}\|f\|_{\lo}.
\end{eqnarray*}
This combined with the fact that $\mu(G)\le\|f\|_{\lo}/t$ implies
that
\begin{equation}\label{3.20}
\mu\lf(\lf\{x\in\cx:\,\lf|Tf^{(N)}(x)\r|>\lf(C_7+C_1+C_2\|T\|_{\lt\to\lt}\r)t\r\}\r)
\le\frac{3}{t}\|f\|_{\lo}.
\end{equation}

Observe that $f^{(N)}\to f^{t}$ in $\lt$ as $N\to\fz$. From the
$\lt$-boundedness of $T$, we then deduce
that $Tf^{(N)}\to Tf^{t}$ also in $\lt$ as $N\to\fz$. By this fact
and \eqref{3.20}, we have
$$\mu\lf(\lf\{x\in\cx:\, \lf|Tf^{t}(x)\r|>
\lf(C_7+C_1+C_2\|T\|_{\lt\to\lt}\r)t\r\}\r) \le\dfrac{3}{t}\|f\|_{L^1(\mu)},$$
from which
together with \eqref{3.17}, it follows that there exist positive
constants $C_8$ and $C_9$ such that
\begin{equation*}
\sup\limits_{t>0}t\,\mu(\{x\in\cx:\,|Tf(x)|>t\})\le\lf(C_8
+C_9\|T\|_{\lt\to\lt}\r)\|f\|_{\lo}.
\end{equation*}
This implies \eqref{1.7}, and hence finishes the proof of the implicity
${\rm (i)}\Rightarrow {\rm (iii)}$.

Now assume that (ii) holds. Then by Remark \ref{r3.3}(ii) and a similar
proof of ${\rm (i)}\Rightarrow {\rm (iii)}$, we see that (iii) holds.
We omit the details, which completes Part I of
the proof of Theorem \ref{t1.1}.
\end{proof}

\section{Proof of Theorem \ref{t1.1}, Part II}\label{s4}

\hskip\parindent
This section is devoted to the proof of (iii) $\Rightarrow$ (i)
of Theorem \ref{t1.1}. To do so, we first
establish the boundedness of $T^\sharp$ from $\lo$ to $\wlo$, which implies that
$\{T_r\}_{r\in(0, \fz)}$ is uniformly bounded from $\lo$ to $\wlo$. By restricting
$\mu$ to $\mu_M$,
where $\mu_M$ is the restriction of $\mu$ to a given
ball $\overline B(x_0, M)$ for some $x_0\in\cx$ and $M\in(0, \fz)$,
we will prove that for any $r\in(0, \fz)$ and $p\in(1, \fz)$, $T_r$
is bounded on $L^p(\mu_M)$. Then using a smooth
truncation argument, we will further show that $\{T_r\}_{r\in(0, \fz)}$ is
uniformly bounded from $\lt$ to $L^2(\mu_M)$ with the constant
independent of $M$. By letting $M\to\fz$, $\{T_r\}_{r\in(0, \fz)}$ is
uniformly bounded on $L^2(\mu)$. An argument involving the random
dyadic cubes from \cite{hm} will yield the desired conclusion.

\begin{thm}\label{t4.1}
Let $T$ be a Calder\'on-Zygmund operator with kernel satisfying
\eqref{1.5} and \eqref{1.6}, which is bounded
from $\lo$ to $\wlo$. Then there exists a positive constant $C$ such that
for any $f\in \lo$,
\begin{equation*}
\lf\|T^{\sharp}f\r\|_\wlo\le C\|f\|_\lo.
\end{equation*}
\end{thm}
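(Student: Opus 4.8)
The plan is to rerun the scheme of Lemma~\ref{l3.1}, the only change being that the $\lt$-boundedness used there---which enters exclusively through the term ${\rm L}_2$---is replaced by the assumed weak type $(1,1)$ boundedness of $T$ together with Kolmogorov's inequality. Fix once and for all some $\dz\in(0,1)$; for $g\in L^\fz_b(\mu)$ write $Tg$ for the bounded extension of $T$ to $\lo$, which agrees with the defining integral at every point outside $\supp g$. The heart of the matter is to show that the Cotlar-type pointwise estimate
\begin{equation}\label{s4e1}
T^\sharp f(x)\le C\lf[\cm_\dz(Tf)(x)+\cm f(x)\r]
\end{equation}
holds, with $C$ independent of $f$, for all $f\in L^\fz_b(\mu)$ and $\mu$-a.e.\ $x\in\cx$.

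To prove \eqref{s4e1}, I would fix $f\in L^\fz_b(\mu)$, $x\in\supp\mu$ and $r\in(0,\fz)$, and choose $k\in\nn$ and $R\equiv5^{k-1}r$ precisely as in the proof of Lemma~\ref{l3.1}, so that \eqref{3.2} and \eqref{3.3} are available; in particular $|T_rf(x)-T_{5R}f(x)|\ls\cm f(x)$ and $\mu(\overline B(x,25R))\ls\mu(\overline B(x,R))$. Write $f=f_1+f_2$ with $f_1\equiv f\chi_{\overline B(x,5R)}$; then $f_1,f_2\in L^\fz_b(\mu)$, $T_{5R}f(x)=Tf_2(x)$, and $Tf=Tf_1+Tf_2$ at $\mu$-a.e.\ point of $\overline B(x,R)$. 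The regularity condition \eqref{1.6}, together with \eqref{1.4}, \eqref{1.3} and a dyadic-annulus decomposition as in the proof of Lemma~\ref{l2.4}, gives $|Tf_2(y)-Tf_2(x)|\ls\cm f(x)$ for all $y\in\overline B(x,R)$, since for such $y$ and for $z$ with $d(x,z)>5R$ one has $d(x,z)\ge2d(x,y)$ and $d(y,z)\sim d(x,z)$. Consequently, for $\mu$-a.e.\ $y\in\overline B(x,R)$,
$$|T_{5R}f(x)|^\dz\le|Tf(y)|^\dz+|Tf_1(y)|^\dz+C[\cm f(x)]^\dz.$$
Averaging this inequality in $y$ over $\overline B(x,R)$, the contribution of $|Tf(y)|^\dz$ is $\ls[\cm_\dz(Tf)(x)]^\dz$ by \eqref{3.2} and the definition of $\cm_\dz$; and since, by the definition of $\cm$ and \eqref{3.2},
$$\|Tf_1\|_\wlo\ls\|f_1\|_\lo=\int_{\overline B(x,5R)}|f|\,d\mu\ls\mu(\overline B(x,R))\,\cm f(x),$$
Kolmogorov's inequality bounds the contribution of $|Tf_1(y)|^\dz$ by
$$\frac1{\mu(\overline B(x,R))}\int_{\overline B(x,R)}|Tf_1(y)|^\dz\,d\mu(y)\ls\lf(\frac{\|Tf_1\|_\wlo}{\mu(\overline B(x,R))}\r)^\dz\ls[\cm f(x)]^\dz.$$
Hence $|T_{5R}f(x)|\ls\cm_\dz(Tf)(x)+\cm f(x)$, and combining with \eqref{3.3} yields $|T_rf(x)|\ls\cm_\dz(Tf)(x)+\cm f(x)$ with the implied constant independent of $r$; taking the supremum over $r\in(0,\fz)$ gives \eqref{s4e1}.

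Granted \eqref{s4e1}, the conclusion for $f\in L^\fz_b(\mu)$ is immediate from Lemma~\ref{l2.3}: since $\dz\in(0,1)$, $\cm_\dz$ is bounded on $\wlo$, so $\|\cm_\dz(Tf)\|_\wlo\ls\|Tf\|_\wlo\ls\|f\|_\lo$ by the hypothesis, while $\|\cm f\|_\wlo\ls\|f\|_\lo$; the quasi-triangle inequality of $\wlo$ then gives $\|T^\sharp f\|_\wlo\ls\|f\|_\lo$. For a general $f\in\lo$ one passes to the limit: since $|K(x,y)|\ls[\lz(x,r)]^{-1}$ whenever $d(x,y)>r$, each $T_rf(x)$ is an absolutely convergent integral and $T^\sharp f$ is well defined; choosing $f_j\in L^\fz_b(\mu)$ with $f_j\to f$ in $\lo$, one has $T_rf_j(x)\to T_rf(x)$ for every $x$ and $r$, hence $T^\sharp f\le\liminf_{j\to\fz}T^\sharp f_j$ pointwise, and the Fatou property of $\wlo$ gives $\|T^\sharp f\|_\wlo\le\liminf_{j\to\fz}\|T^\sharp f_j\|_\wlo\ls\|f\|_\lo$.

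I expect the principal difficulty to be the Kolmogorov step replacing the term ${\rm L}_2$ of Lemma~\ref{l3.1}: it obliges one to localize $f$ to the ball $\overline B(x,5R)$ rather than $\overline B(x,R)$, and then to exploit the doubling inequality \eqref{3.2} at exactly the right places in order to move freely among $\mu(\overline B(x,R))$, $\mu(\overline B(x,5R))$ and $\mu(\overline B(x,25R))$; one must also keep careful track of the precise sense of $Tf$, $Tf_1$ and $Tf_2$ (bounded extension versus defining integral) so that the identity $Tf=Tf_1+Tf_2$ on $\overline B(x,R)$ is legitimate. The annular kernel estimate and the closing density argument should then be routine.
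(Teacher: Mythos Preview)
Your proposal is correct and follows essentially the same route as the paper's proof: both establish the pointwise Cotlar-type inequality $T^\sharp f(x)\lesssim\cm_p(Tf)(x)+\cm f(x)$ for some $p\in(0,1)$ by the identical decomposition $f=f_1+f_2$ with $f_1=f\chi_{\overline B(x,5R)}$, use the kernel regularity to show $Tf_2$ is essentially constant on $\overline B(x,R)$, and apply Kolmogorov's inequality to the local piece $Tf_1$ exactly as you describe. Your explicit density argument for general $f\in L^1(\mu)$ is a welcome addition that the paper leaves implicit.
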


\begin{proof}
Let $p\in(0, 1)$. By (i) and (ii) of Lemma \ref{l2.3}, we see that
$\cm$ is bounded from $\lo$ to $\wlo$, and
$\cm_p$ is bounded on $\wlo$. Then by the boundedness of
$T$ from $\lo$ to $\wlo$, to show Theorem \ref{t4.1},
we only need to prove that there exist positive
constants $C$ and $C(p)$ such that for any $f\in L^\fz_b(\mu)$ and $x\in\cx$,
\begin{equation*}
[T^{\sharp}f(x)]^p\ls[\cm _pTf(x)]^p+[\cm f(x)]^p.
\end{equation*}
Moreover, it suffices to prove that for any $r>0$, $f\in L_b^\fz(\mu)$ and $x\in\cx$,
\begin{equation}\label{4.1}
|T_rf(x)|^p\ls [\cm _pTf(x)]^p+[\cm f(x)]^p.
\end{equation}
To this end, for any $j\in\nn$, let $r_j\equiv5^jr$ and
$\mu_j\equiv\mu(\overline B(x,\,r_j))$ be as in the proof of Lemma \ref{l3.1}.
Again let $k$ be the
{\it smallest positive integer} such that
$\mu_{k+1}\le4C_{\lz}^6\mu_{k-1}$ and $R\equiv r_{k-1}=5^{k-1}r$.
Similarly to the proof of \eqref{3.3}, we see that
\begin{equation}\label{4.2}
|T_rf(x)-T_{5R}f(x)|\ls \cm f(x).
\end{equation}

Let $f_1\equiv f\chi_{\overline B(x,\, 5R)}$ and $f_2\equiv f-f_1$.
For any $u\in \overline B(x, R)$,
if $K$ is the kernel associated with $T$, then by \eqref{1.6} and
\eqref{1.3}, we see that
\begin{eqnarray*}
\lf|Tf_2(x)-Tf_2(u)\r|&&\le \dint_{d(x,\, y)>5R}
\lf|K(x, y)-K(u, y)\r||f(y)|\,d\mu(y)\\
&&\ls \dsum_{k=1}^\fz\lf[\frac{d(x, u)}{5^kR}\r]^\tau\dint_{\overline
B(x,\, 5^{k+1}R)}\frac{|f(y)|}{\lz(x, 5^kR)}\,d\mu(y)\ls\cm f(x).
\end{eqnarray*}
This, combined with \eqref{4.2}  and the fact that
\begin{equation*}
 Tf_2(x)=\dint_\cx K(x, y)f_2(y)\,d\mu(y)=T_{5R}f(x),
\end{equation*}
  implies that
\begin{eqnarray*}
|T_rf(x)|&&\le|T_r f(x)-T_{5R}f(x)|+|T_{5R}f(x)-Tf_2(u)|+|Tf_2(u)|\\
&&\ls \cm f(x)+|Tf(u)|+|Tf_1(u)|,
\end{eqnarray*}
from which and $p\in(0, 1)$, it further follows that for all $u\in \overline
B(x, R)$,
\begin{equation}\label{4.3}
|T_rf(x)|^p\ls \lf[\cm f(x)\r]^p+|Tf(u)|^p+|Tf_1(u)|^p.
\end{equation}

Since $T$ is bounded from $\lo$ to $\wlo$, by the Kolmogorov
inequality (see, for example, \cite[p.\,102]{d01}), we obtain that
\begin{equation}\label{4.4}
\dfrac1{\mu(\overline B(x, R))}\dint_{\overline B(x,\,R)}
|Tf_1(u)|^p\,d\mu(u)\ls \dfrac1{[\mu(\overline B(x, R))]^p}\lf[
\dint_{\overline B(x,\,R)}|f_1(u)|\,d\mu(u)\r]^p.
\end{equation}
Taking the average on the variable $u$ over $\overline B(x, R)$ on both sides of \eqref{4.3}, and
using \eqref{4.4}, the H\"older inequality and \eqref{3.2}, we see that
\begin{eqnarray*}
|T_rf(x)|^p&&\ls \lf[\cm f(x)\r]^p+
\lf[\cm_p(Tf)(x)\r]^p+\dfrac1{\mu(\overline B(x, R))}
\dint_{\overline B(x,\,R)}|Tf_1(u)|^p\,d\mu(u)\\
&&\ls  \lf[\cm f(x)\r]^p+ \lf[\cm_p(Tf)(x)\r]^p +\dfrac1{[\mu(\overline B(x, 25R))]^p}
\lf[\dint_{\overline B(x,\,5R)}|f(u)|\,d\mu(u)\r]^p\\
&&\ls\lf[\cm f(x)\r]^p+\lf[\cm_p(Tf)(x)\r]^p,
\end{eqnarray*}
which implies \eqref{4.1}, and hence completes the proof Theorem \ref{t4.1}.
\end{proof}

Let $x_0\in\cx$ and $M\in(0, \fz)$. We now obtain the boundedness of
the truncated operators
$\{T_r\}_{r\in(0, \fz)}$ on $L^p(\mu_M)$ for all
$p\in(1, \fz)$.
Notice that the set $X\setminus\overline{B}(x_0,M)$ has
$\mu_M$-measure zero by definition, and hence we may agree
that any $f\in L^p(\mu_M)$ satisfies $f|_{X\setminus\overline{B}(x_0,M)}\equiv 0$.
With this agreement, observe that
\begin{equation*}
  T_r f(x)=\int_{d(x,y)>r}K(x,y)f(y)\,d\mu(y)=\int_{d(x,y)>r}K(x,y)f(y)\,d\mu_M(y)
\end{equation*}
for $f\in L^p(\mu_M)$, so we may also replace $\mu$ by $\mu_M$ in the
formula of $T_r f$ when considering functions $f\in L^p(\mu_M)$.
Finally, observe that $\mu_M$ also satisfies the upper doubling
condition with the same dominating function $\lambda$, so that all
results shown for $\mu$ apply equally well to $\mu_M$, with constants
uniform with respect to $M$.

\begin{lem}\label{l4.1}
Let $p\in(1,\fz)$ and $r\in(0, \fz)$. Let $M\in(0, \fz)$
and $\mu_M$ be as above. Then there exists a positive constant
$\wz C$, depending on $M$ and $r$, such that for all $f\in L^p(\mu_M)$,
$$\|T_rf\|_{L^p(\mu_M)}\le \wz C\|f\|_{L^p(\mu_M)}.$$
\end{lem}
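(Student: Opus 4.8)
The plan is to exploit that, once $r$ is fixed and everything is restricted to $\overline B(x_0, M)$, the truncated kernel $(x,y)\mapsto K(x,y)\chi_{\{d(x,y)>r\}}$ is a \emph{bounded} kernel living on a set of finite measure, so that $T_r$ is a trivial integral operator on $L^p(\mu_M)$. The only nontrivial ingredient is a uniform positive lower bound
\[
\inf_{x\in\overline B(x_0,\,M)}\lz(x,r)\ge c(M,r)>0,
\]
and this is where the structural hypotheses enter. By the geometrical doubling property (Definition \ref{d1.2}), $\overline B(x_0, M)$ is covered by finitely many balls $B(z_1,r),\dots,B(z_m,r)$; for $x\in B(z_i,r)$ one has $d(z_i,x)<r$, so \eqref{1.4} gives $\lz(z_i,r)\le\wz C\lz(x,r)$, whence $\lz(x,r)\ge\wz C^{-1}\min_{1\le i\le m}\lz(z_i,r)>0$. (Alternatively, since $d(x_0,x)\le M$, one may use \eqref{1.4} once to get $\lz(x,M)\ge\wz C^{-1}\lz(x_0,M)\ge\wz C^{-1}\lz(x_0,r)$, and then the doubling condition \eqref{1.3} finitely many times to pass from radius $M$ down to radius $r$; either route works.)

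Granting this, I would combine it with the size estimate \eqref{1.5} and the monotonicity of $r\mapsto\lz(x,r)$ to obtain, for every $x\in\overline B(x_0,M)$ and every $y$ with $d(x,y)>r$,
\[
|K(x,y)|\le\frac{C}{\lz(x,d(x,y))}\le\frac{C}{\lz(x,r)}\le\frac{C}{c(M,r)}\equiv C'(M,r).
\]
Recall that a function $f\in L^p(\mu_M)$ vanishes off $\overline B(x_0,M)$, and that this set has finite measure since $\mu(\overline B(x_0,M))\le\mu(B(x_0,2M))\le\lz(x_0,2M)<\fz$. Hence, for $\mu_M$-a.e.\ $x$ (that is, for $x\in\overline B(x_0,M)$), the H\"older inequality yields
\[
|T_rf(x)|\le C'(M,r)\int_{\overline B(x_0,\,M)}|f(y)|\,d\mu_M(y)\le C'(M,r)\,[\mu(\overline B(x_0,M))]^{1/p'}\,\|f\|_{L^p(\mu_M)}.
\]
Raising this to the $p$-th power and integrating over $\overline B(x_0,M)$ against $\mu$ gives $\|T_rf\|_{L^p(\mu_M)}\le C'(M,r)\,\mu(\overline B(x_0,M))\,\|f\|_{L^p(\mu_M)}$, i.e.\ the claim with $\wz C\equiv C'(M,r)\mu(\overline B(x_0,M))$. (Equivalently, the last step is the Schur test applied to the bounded kernel $K\chi_{\{d>r\}}$ with constant weight, using that $\int|K(x,y)|\chi_{\{d(x,y)>r\}}\,d\mu_M\le C'(M,r)\mu_M(\cx)$ in both variables.)

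There is essentially no obstacle in this argument; the one place requiring a little care is precisely the uniform positivity of $\lz(\cdot,r)$ on the bounded ball, which is what forces us to invoke both the geometrical doubling and property \eqref{1.4} (or the doubling condition \eqref{1.3}). It is worth stressing that the constant $\wz C$ here is allowed to depend badly on $M$ and on $r$; uniformity in $M$ is not claimed at this stage and will only be recovered after the smooth-truncation argument outlined at the beginning of this section.
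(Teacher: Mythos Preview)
Your proof is correct and actually more direct than the paper's. Both arguments ultimately rest on a uniform lower bound for $\lz(x,r)$ over $x\in\overline B(x_0,M)$, but they reach the $L^p(\mu_M)$ bound by different routes. The paper first proves the sharper pointwise estimate
\[
|T_rf(x)|\ls[\lz(x,r)]^{-1/p}\|f\|_{L^p(\mu_M)},
\]
obtained by applying H\"older and then controlling $\int_{\cx\setminus B(x,r)}[\lz(x,d(x,y))]^{-p'}\,d\mu(y)\ls[\lz(x,r)]^{1-p'}$ via an inductive construction of radii $r_0<r_1<\cdots$ along which $\lz(x,r_i)$ at least doubles; only after this do they invoke the restriction to $\overline B(x_0,M)$ to bound $[\lz(x,r)]^{-1}$ uniformly. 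You instead bound the kernel itself uniformly, $|K(x,y)|\le C/\lz(x,r)\le C'(M,r)$, and then use that $\mu_M(\cx)<\fz$, which is the Schur-test shortcut. What the paper's detour buys is that the intermediate estimate $\int_{d(x,y)>r}[\lz(x,d(x,y))]^{-p'}\,d\mu(y)\ls[\lz(x,r)]^{1-p'}$ holds for the full measure $\mu$ (not just $\mu_M$) and for all $x\in\cx$; your argument gives up this generality but reaches the stated lemma with less work, which is entirely appropriate since the constant is explicitly allowed to depend on $M$ and $r$.
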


\begin{proof}
We first claim that there exists a positive constant $C$
such that for all $x\in \overline B(x_0, M)$,
\begin{equation}\label{4.5}
|T_rf(x)|\le C[\lz(x,r)]^{-1/p}\|f\|_{L^p(\mu_M)}.
\end{equation}
To this end, let $B_0\equiv B(x,r)$. Then \eqref{1.5} together with the H\"older
inequality gives that
\begin{equation}\label{4.6}
|T_rf(x)|\ls \lf[\int_{\cx\setminus B_0}\frac{d\mu(y)}
{[\lz(x,d(x,y))]^{p'}}\r]^\frac{1}{p'}\|f\|_{L^p(\mu_M)}.
\end{equation}

We prove the claim by inductively constructing an auxiliary
sequence $\{r_0, r_1, r_2,\ldots\}$ of radii
such that $r_0=r$ and $r_{i+1}$ is the smallest $2^k r_i$ with $k\in\nn$ satisfying
\begin{equation}\label{4.7}
\lz(x, 2^kr_i)>2 \lz(x, r_i),
\end{equation}
whenever such a $k$ exists.
We consider the following two cases.

{\it Case} (i) For each $i\in\zz_+$, there exists
$k\in\nn$ such that \eqref{4.7} holds.
In this case, $r_{i+1}$ will be the smallest $2^kr_i$
satisfying \eqref{4.7} for all $k\in\nn$,
and $\{B_i\}_{i\in\nn}\equiv \{B(x, r_i)\}_{i\in\nn}$.
Now by \eqref{1.3} and the fact that $2^i\lz(x, r)\le \lz(x, r_i)$
for all $i\in\zz_+$, we have that
\begin{eqnarray}\label{4.8}
\int_{\cx\setminus
B_0}\frac{d\mu(y)}{[\lz(x,d(x,y))]^{p'}}
&&\ls\dsum_{i=0}^{\fz}\frac{\mu(B_{i+1})}{[\lz(x,r_{i+1})]^{p'}}
\ls\sum_{i=0}^{\fz}\frac{1}{[\lz(x, r_{i+1})]^{p'-1}}\\
&&\ls\sum_{i=0}^{\fz}\frac{1}{[2^i\lz(x,r)]^{p'-1}}
\sim\frac{1}{[\lz(x, r)]^{p'-1}}\nonumber
\end{eqnarray}
and hence
$$\lf[\int_{\cx\setminus B_0}\frac{d\mu(y)}
{[\lz(x,d(x,y))]^{p'}}\r]^\frac{1}{p'}
\ls[\lz(x,r)]^{-\frac{1}{p}},$$
which combined with \eqref{4.6} implies \eqref{4.5}
and the claim holds in this case.

{\it Case} (ii) For some $i_0\in\mathbb{Z}_+$, \eqref{4.7} holds for
all $i< i_0$ but does not hold for  $i_0$. In this case, if
$i_0\in\nn$, we let $\{B_i\}_{i=1}^{i_0}$ as in Case (i),
$r_{i_0+1}\equiv\fz$ and $B_{i_0+1}\equiv\cx$; otherwise, if
$i_0=0$, we then let $r_1\equiv\fz$ and $B_1\equiv\cx$. Then we see that
$\lz(x,2^kr_{i_0})\le2\lz(x,r_{i_0})$ for all $k\in\nn$ and
$$\mu(\cx)\equiv\lim_{t\to\fz}\mu(B(x,t))\le
\lim_{t\to\fz}\lz(x,t)\equiv\lz(x,\fz)\le2\lz(x,r_{i_0}),$$
which, together with \eqref{1.3} and the fact that
$2^i\lz(x, r)\le \lz(x, r_i)$ for all $i\le i_0$, gives \eqref{4.8}
in this case, and the claim holds.

If $x\in\supp\mu_M=\overline B(x_0,M)$, then
$\supp\mu_M\subseteq B(x, 3M)$. By this and the definition
of $\supp\mu_M$, we get that
$$\mu_M(\cx)=\mu_M(B(x, 3M))\le\lz(x, 3M)\le C_\lz^{1+\log_2(3M/r)}\lz(x,r),$$
thus
$$\frac{1}{\lz(x,r)}\le\frac{C_\lz^{3+\log_2(M/r)}}{\mu_M(\cx)}.$$
By this fact, we obtain that
$$\int_\cx\frac{d\mu_M(\cx)}{\lz(x,r)}\le\frac{C_\lz^{3+\log_2(M/r)}}
{\mu_M(\cx)}\int_\cx\,d\mu_M(x)\le C_{\lz}^{3+\log_2(M/r)}.$$
From this and \eqref{4.5}, it follows that
\begin{align*}
  \|T_rf\|_{L^p(\mu_M)}
  &\ls \|f\|_{L^p(\mu_M)}\lf[\int_\cx\frac{d\mu_M(x)}{\lz(x,r)}\r]^\frac{1}{p} \\
  &\ls\|f\|_{L^p(\mu_M)}\lf[C_\lz^{3+\log_2(M/r)}\r]^\frac{1}{p}
   =\widetilde{C}(M,r)\|f\|_{L^p(\mu_M)}.
\end{align*}
This finishes the proof of Lemma \ref{l4.1}.
\end{proof}

We will need the following result which shows that two bounded Calder\'on--Zygmund
operators having the same kernel can at most differ by a multiplication operator.

\begin{prop}\label{p4.1}
Let $T$ and $\wz T$ be Calder\'{o}n-Zygmund operators which
have the same kernel satisfying
\eqref{1.5} and \eqref{1.6} and are both bounded from $L^p(\mu)$ to
$L^{p,\,\fz}(\mu)$ for some $p\in[1,\infty)$. Then
there exists $b\in L^{\infty}(\mu)$ such that for all $f\in L^p(\mu)$,
\begin{equation*}
  Tf-\wz Tf=bf\quad  {\rm{and}} \quad\Norm{b}{L^{\infty}(\mu)}\leq\Norm{T-\wz T}
  {L^p(\mu)\to L^{p,\infty}(\mu)}.
\end{equation*}
\end{prop}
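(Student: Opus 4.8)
The plan is to work with the difference operator $S\equiv T-\wz T$ and to exploit its \emph{locality}: since $T$ and $\wz T$ share the kernel $K$, for every $f\in L^\fz_b(\mu)$ and every $x\notin\supp f$ one has $Tf(x)=\int_\cx K(x,y)f(y)\,d\mu(y)=\wz Tf(x)$, so that $Sf$ vanishes $\mu$-a.e.\ off $\supp f$. A bounded local operator must be multiplication by an $L^\fz$ function, and the proof amounts to producing that function and controlling its norm. First I would record that $(\cx,\mu)$ is $\sigma$-finite: being separable, $\cx$ is Lindel\"of, hence a countable union of unit balls, each of finite $\mu$-measure by \eqref{1.3}; fix a sequence $F_1\subseteq F_2\subseteq\cdots$ of bounded Borel sets with $\bigcup_k F_k=\cx$ and $\mu(F_k)<\fz$ for every $k$.

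Next I would define the candidate $b$ by $b\equiv S\chi_{F_k}$ on $F_k$ (note $\chi_{F_k}\in L^\fz_b(\mu)$). This is consistent: for $k<k'$, locality applied to $\chi_{F_{k'}\setminus F_k}\in L^\fz_b(\mu)$ gives $S\chi_{F_{k'}}=S\chi_{F_k}+S\chi_{F_{k'}\setminus F_k}$ with the last term vanishing $\mu$-a.e.\ on $F_k$, so $S\chi_{F_{k'}}=S\chi_{F_k}$ $\mu$-a.e.\ on $F_k$; thus $b$ is a well-defined $\mu$-measurable, $\mu$-a.e.\ finite function on $\cx$. I would then show $S\chi_E=b\chi_E$ $\mu$-a.e.\ for every Borel set $E$ with $\mu(E)<\fz$: setting $E_k\equiv E\cap F_k\uparrow E$, comparing $S\chi_{F_k}$ restricted to $E_k$ with $S\chi_{E_k}$ (and using that $S\chi_{E_k}$ is supported in $E_k$) yields $S\chi_{E_k}=b\chi_{E_k}$ $\mu$-a.e.; since $\chi_{E_k}\to\chi_E$ in $\lp$ and $S$ is bounded into $\wlp$, one has $S\chi_{E_k}\to S\chi_E$ in $\wlp$, hence $\mu$-a.e.\ along a subsequence, while $b\chi_{E_k}\to b\chi_E$ pointwise, so $S\chi_E=b\chi_E$.

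To bound $b$, put $A\equiv\|T-\wz T\|_{\lp\to\wlp}$ and suppose, for contradiction, that $\mu(\{|b|>t\})>0$ for some $t>A$. By $\sigma$-finiteness choose a Borel set $E_0\subseteq\{|b|>t\}$ with $0<\mu(E_0)<\fz$; then $\|b\chi_{E_0}\|_{\wlp}\ge t\,\mu(E_0)^{1/p}$, whereas $\|b\chi_{E_0}\|_{\wlp}=\|S\chi_{E_0}\|_{\wlp}\le A\,\|\chi_{E_0}\|_{\lp}=A\,\mu(E_0)^{1/p}$, which is impossible since $t>A$ and $\mu(E_0)>0$. Hence $b\in\lin$ with $\|b\|_{\lin}\le A$. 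By linearity $S$ then coincides with multiplication by $b$ on all simple functions with finite-measure support; since these are dense in $\lp$ and both $S$ and $g\mapsto bg$ are bounded into $\wlp$ (with $\|bg\|_{\wlp}\le\|bg\|_{\lp}\le\|b\|_{\lin}\|g\|_{\lp}$), a density argument together with the usual passage from $\wlp$-convergence to $\mu$-a.e.\ convergence along a subsequence gives $Tf-\wz Tf=bf$ for all $f\in\lp$, completing the proof.

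I expect the only genuinely delicate points to be the two approximation steps — from characteristic functions of bounded finite-measure sets to arbitrary finite-measure sets, and from simple functions to general elements of $\lp$ — where one must remember that $\wlp$ carries only a quasi-norm, so convergence there has to be upgraded to pointwise convergence $\mu$-a.e.\ along a subsequence before identifying limits; all the remaining content is a direct consequence of the linearity and locality of $S$.
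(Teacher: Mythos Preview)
Your argument has a genuine gap at the locality step. From the definition of a Calder\'on--Zygmund operator you only know that $Sf(x)=0$ for $\mu$-a.e.\ $x\notin\supp f$, where $\supp f$ is the \emph{essential (closed) support}. You repeatedly use the stronger assertion that $S\chi_E$ vanishes $\mu$-a.e.\ on $\cx\setminus E$ for an arbitrary Borel set $E$, and this is not justified. Concretely, to get $S\chi_{E_k}=b\chi_{E_k}$ on $E_k$ you need $S\chi_{F_k\setminus E_k}=0$ $\mu$-a.e.\ on $E_k$; locality only gives vanishing off $\overline{F_k\setminus E_k}$, and $E_k\cap\overline{F_k\setminus E_k}$ can have positive $\mu$-measure (take $E_k$ a fat-Cantor-type subset of $F_k$: then $F_k\setminus E_k$ is dense in $F_k$, so $\overline{F_k\setminus E_k}\supseteq E_k$). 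The same issue already appears in your consistency step $S\chi_{F_{k'}\setminus F_k}=0$ on $F_k$, and later in the $L^\infty$ bound, where you apply the identity to the level set $E_0=\{|b|>t\}$, again a completely general Borel set.

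This is precisely the difficulty the paper's proof is designed to overcome. The paper invokes Lemma~4.2 to obtain a dyadic system $\{Q^k_\az\}$ with $\mu\big(\bigcup_{k,\az}\partial Q^k_\az\big)=0$; for such cubes, $Q^k_\az\cap\overline{Q^k_\bz}\subseteq\partial Q^k_\bz$ has measure zero when $\az\neq\bz$, so the closed-support locality \emph{does} yield $S(1_{Q^k_\bz\cap B_M})=0$ a.e.\ on $Q^k_\az$. One then gets $Sf=f\cdot S(1_{B_M})$ first for dyadic step functions and then for general $f$ by martingale convergence $\Exp_kf\to f$. The zero-boundary property is not automatic in a general upper-doubling metric space; its proof uses the random dyadic cubes of \cite{hm}. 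Some device of this kind---a generating family of sets with $\mu$-null topological boundaries on which to run the locality argument---is needed, and your sketch does not supply one.
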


The proof will rely on the following lemma.

\begin{lem}\label{l4.2}
For a suitable $\delta\in(0,1)$, there exists a sequence of countable
Borel partitions, $\{Q^k_{\alpha}\}_{\alpha\in\mathscr{A}_k}$, $k\in\zz$,
of $\cx$ with the following properties:
\begin{itemize}
  \item[\rm(i)] For some $x^k_{\alpha}\in\cx$ and constants $0<c_1<c_2<\infty$,
  $B(x^k_{\alpha},c_1\delta^k)\subseteq Q^k_{\alpha}\subseteq
  B(x^k_{\alpha},c_2\delta^k)$;
  \item[\rm(ii)] $\{Q^{k+1}_{\alpha}\}_{\alpha\in\mathscr{A}_{k+1}}$ is
  a refinement of $\{Q^k_{\alpha}\}_{\alpha\in\mathscr{A}_k}$.
\end{itemize}
Moreover, it may be arranged that
\begin{equation}\label{4.9}
\mu\lf(\bigcup_{\az,\,k}\partial Q^k_{\alpha}\r)=0,
\end{equation}
where for a set $Q$,  $\partial Q\equiv\{x\in\cx:\ d(x,Q)
=d(x, \cx\setminus Q)=0\}$ is the boundary.
\end{lem}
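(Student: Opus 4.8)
The plan is to combine the standard construction of dyadic cubes on a geometrically doubling metric space with the probabilistic refinement of Hyt\"onen and Martikainen \cite{hm}. First, since $(\cx, d)$ is geometrically doubling, for a suitable $\delta\in(0,1)$ the dyadic cube construction (see \cite{hm}) produces a sequence of countable Borel partitions $\{Q^k_\az\}_{\az\in\mathscr{A}_k}$, $k\in\zz$, of $\cx$, together with center points $x^k_\az\in\cx$ and constants $0<c_1<c_2<\fz$ for which properties (i) and (ii) hold. This part of the argument uses only the geometric doubling of $(\cx, d)$ and does not involve $\mu$ at all; it remains to arrange, in addition, that the union of all cube boundaries is $\mu$-null.

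For this I would invoke the random dyadic cubes of \cite{hm}. There one constructs a probability space $(\boz,\pp)$ and, for each $\oz\in\boz$, a dyadic system $\{Q^k_\az(\oz)\}_{\az\in\mathscr{A}_k}$, $k\in\zz$, as in the previous paragraph, with the parameters $\delta$, $c_1$, $c_2$ independent of $\oz$, enjoying the following \emph{small boundary} property: there are constants $\eta\in(0,\fz)$ and $C\in(0,\fz)$, depending only on $\delta$ and the geometric doubling constant, such that for each fixed $x\in\cx$ and $k\in\zz$, writing $Q^k(\oz, x)$ for the unique cube of the level-$k$ partition $\{Q^k_\az(\oz)\}_\az$ that contains $x$,
$$\pp\lf(\lf\{\oz\in\boz:\ d\lf(x,\ \cx\setminus Q^k(\oz,x)\r)\le t\,\delta^k\r\}\r)\le C\,t^{\eta}\qquad\text{for all }t\in(0,1).$$
Letting $t\to0$, we obtain that for each fixed $x$ and $k$ the event $\{\oz:\ d(x,\cx\setminus Q^k(\oz,x))=0\}$ is $\pp$-null; since $x$ belongs to exactly one cube at level $k$ and $d(x,Q^k(\oz,x))=0$ trivially, this event coincides with $\{\oz:\ x\in\partial Q^k(\oz,x)\}$, and, because the cubes at level $k$ form a partition, $\bigcup_{\az}\partial Q^k_\az(\oz)=\{x\in\cx:\ x\in\partial Q^k(\oz,x)\}$. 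Taking the countable union over $k\in\zz$ therefore gives
$$\pp\lf(\lf\{\oz\in\boz:\ x\in\textstyle\bigcup_{\az,\,k}\partial Q^k_\az(\oz)\r\}\r)=0\qquad\text{for every }x\in\cx.$$

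Finally, granting the joint measurability of $(x,\oz)\mapsto\chi_{\bigcup_{\az,\,k}\partial Q^k_\az(\oz)}(x)$ supplied by the construction in \cite{hm}, the Fubini theorem applied to $\mu\times\pp$ yields
$$\int_\boz\mu\lf(\bigcup_{\az,\,k}\partial Q^k_\az(\oz)\r)\,d\pp(\oz)
=\int_\cx\pp\lf(\lf\{\oz\in\boz:\ x\in\textstyle\bigcup_{\az,\,k}\partial Q^k_\az(\oz)\r\}\r)\,d\mu(x)=0,$$
so that $\mu(\bigcup_{\az,\,k}\partial Q^k_\az(\oz))=0$ for $\pp$-almost every $\oz\in\boz$. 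Fixing any such $\oz$ and setting $Q^k_\az\equiv Q^k_\az(\oz)$ produces a deterministic dyadic system satisfying (i), (ii) and \eqref{4.9} simultaneously, which is the assertion. The main obstacle is the small boundary probability estimate displayed above; it is the technical heart of the \cite{hm} construction and requires controlling how the random relabelling of the dyadic centers at each scale moves the cube walls, together with the geometric doubling property to bound, uniformly in the scale, the number of neighboring cubes competing for a given point, so that the event that $x$ lies within relative distance $t$ of some wall has probability $O(t^{\eta})$.
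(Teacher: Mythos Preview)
Your proposal is correct and follows essentially the same route as the paper: invoke the random dyadic cubes $Q^k_\az(\oz)$ from \cite{hm}, use the small-boundary probability estimate (the paper cites it in the equivalent form $\pp(x\in\bigcup_\az\delta_\eps Q^k_\az)\ls\eps^\eta$ with $\delta_\eps Q\equiv\{d(\cdot,Q)\le\eps\ell(Q)\}\cap\{d(\cdot,\cx\setminus Q)\le\eps\ell(Q)\}$), let $\eps\to0$, sum over $k$, apply Fubini to $\mu\times\pp$, and fix a good $\oz$. Your identification $\bigcup_\az\partial Q^k_\az(\oz)=\{x:\ x\in\partial Q^k(\oz,x)\}$ is the one place the paper's formulation sidesteps, but it is valid: if $x\in\partial Q^k_\bz$ with $\bz\neq\az(x)$, then $d(x,Q^k_\bz)=0$ forces $d(x,\cx\setminus Q^k_{\az(x)})=0$, so $x\in\partial Q^k_{\az(x)}$ as well.
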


\begin{proof}
Let $\{Q^k_\az\}_{\az,\,k\in\zz}$ be the \emph{random dyadic cubes}
constructed in \cite{hm}, so in fact $Q^k_\az=Q^k_\az(\omega)$ where $\omega$ is a point of an underlying \emph{probability space}  $\boz$.
We use $\mathbb{P}$ to denote a \emph{probability measure} on $\Omega$ (as constructed in \cite{hm}), so that $\mathbb{P}(A)$ is \emph{probability of the event $A\subset\boz$}.
By the construction given in \cite{hm}, these sets automatically satisfy the other claims for all $\omega\in\Omega$, and it remains to show that we can choose $\omega\in\Omega$ so as to also satisfy \eqref{4.9}.

The ``side-length'' of $Q^k_{\alpha}$ is defined $\ell(Q^k_{\alpha})
\equiv\delta^k$, where $\delta\in(0,1)$ is a fixed parameter
entering the construction. For $\eps\in(0, \fz)$, let
$$\delta_{\eps}Q\equiv\{x:\ d(x,Q)\le\eps\ell(Q)\}
\bigcap\{x:\ d(x,\cx\setminus Q)\le\eps\ell(Q)\}.$$
It was shown in \cite[Lemma 10.1]{hm} that there exists an
$\eta>0$ such that for any fixed $x\in \cx$ and $k\in \zz$,
\begin{equation*}
  \prob\lf(x\in\bigcup_{\alpha} \delta_{\eps}{Q^k_{\alpha}}\r)
  \lesssim \eps^{\eta}.
\end{equation*}
In particular, by taking the limit as $\eps\to 0$, we obtain that
\begin{equation*}
  \prob\lf(x\in\bigcup_{\alpha} \partial Q^k_{\alpha}\r)=0.
\end{equation*}
Then it is possible to sum the zero probabilities over $k\in\Z$ to deduce
\begin{equation*}
  \prob\lf(x\in\bigcup_{k,\,\alpha} \partial Q^k_{\alpha}\r)=0.
\end{equation*}
Now we can compute (the integration variable of the
$\ud\prob$-integrals is $\omega\in\Omega$, the random
variable implicit in the random dyadic cubes
$Q^k_{\alpha}=Q^k_{\alpha}(\omega)$):
\begin{align*}
  \int_{\Omega} \mu\lf(\bigcup_{k,\,\alpha}
  \partial Q^k_{\alpha}\r)\ d\prob
  &=\int_{\Omega}\int_{\cx} 1_{\bigcup_{k,\,\alpha}
  \partial Q^k_{\alpha}}(x)\ d\mu(x)\ d\prob
  =\int_{\cx} \int_{\Omega}1_{\bigcup_{k,\,\alpha}
  \partial Q^k_{\alpha}}(x)\ud\prob\ d\mu(x)\\
  &=\int_{\cx} \prob\lf(x\in \bigcup_{k,\,\alpha}
  \partial Q^k_{\alpha}\r)\,d\mu(x)= 0.
\end{align*}
So, the integral of $\mu(\cup_{k,\,\alpha}
\partial Q^k_{\alpha}(\omega))\geq 0$ is zero.
This means that $\mu(\cup_{k,\,\alpha}
\partial Q^k_{\alpha}(\omega))= 0$ for $\prob$-almost every $\omega\in\Omega$.
Now we just fix one such $\omega$, and for this choice,
the boundaries of the corresponding dyadic cubes
$Q^k_{\alpha}=Q^k_{\alpha}(\omega)$
have $\mu$-measure zero.
This implies \eqref{4.9} and hence finishes the proof of Lemma \ref{l4.2}.
\end{proof}

\begin{proof}[Proof of Proposition~\ref{p4.1}]
Let $S\equiv T-\wz T$. Then
$S$ is bounded from $L^p(\mu)$ to $L^{p,\,\infty}(\mu)$ for
some $p\in[1, \fz)$ as in the proposition,
and it has kernel $0$.
We will prove that for all $M\in\nn$ and all
$f\in L^p(\mu)$ with $\supp f\subseteq B_M\equiv \overline B(x_0, M)$,
and $\mu$-almost every $x\in\cx$,
\begin{equation}\label{4.10}
S f(x)=f(x)S\lf(1_{B_M}\r)(x)\equiv f(x)b_M(x)
\end{equation}
and
\begin{equation}\label{4.11}
\|b_M\|_{L^\fz(\mu_M)}\le \|S\|_{L^p(\mu)\to L^{p,\infty}(\mu)},
\end{equation}
 where $\mu_M\equiv\mu|_{B_M}$.

Suppose for the moment that \eqref{4.9} and \eqref{4.10}
are already verified. If $M<M'$, then for all $f\in L^p(\mu)$
with $\supp f\subseteq B_M\subseteq B_{M'}$, we have $fb_M=Sf=fb_{M'}$
almost everywhere on $B_M$. Since this is true for all such $f$,
we must have $b_{M'}=b_M$ on $B_M$, and hence we can unambiguously
define $b(x)$ for all $x\in\cx$ by setting $b(x)\equiv b_M(x)$ for
$x\in B_M$. The uniform bound \eqref{4.10} implies that
$\|b\|_{L^{\infty}(\mu)}\leq\|S\|_{L^p(\mu)\to L^{p,\infty}(\mu)}$,
and we have $Sf = bf$ for all $f\in L^p(\mu)$ with bounded support.
Finally, by density this holds for all $f\in L^p(\mu)$. Thus,
proving \eqref{4.9} and \eqref{4.10} will prove the proposition,
and we turn to this task.

Now we prove \eqref{4.9}. Let us consider functions of the form
\begin{equation}\label{4.12}
  \sum_{\alpha} x^k_{\alpha} 1_{Q^k_{\alpha}\cap B_M},
\end{equation}
where $\{Q^k_{\alpha}\}_{\az,\,k}$ are the dyadic cubes with
zero-measure boundaries, as provided by Lemma \ref{l4.2}.
Since $(\cx, d)$ is geometrically doubling and $B_M$ is bounded, we
see that only finitely many $Q^k_{\alpha}$ intersect $B_M$, and hence the
sum in \eqref{4.12} may taken to be finite.

We claim that for $\mu$-almost every $x\in\cx$,
\begin{equation}\label{4.13}
  S\lf(1_{Q^k_{\alpha}\cap B_M}\r)(x)=1_{Q^k_{\alpha}\cap B_M}(x)
  \cdot S\lf(1_{B_M}\r)(x).
\end{equation}
Indeed, observe first that for $\mu$-almost every $x\in \cx$,
\begin{equation}\label{4.14}
  S\lf(1_{B_M}\r)(x)=S\lf(\sum_{\beta}1_{Q^k_{\beta}\cap B_M}\r)(x)
  =\sum_{\beta}S\lf(1_{Q^k_{\beta}\cap B_M}\r)(x).
\end{equation}
On the other hand, the assumption that $S$ has kernel $0$
means that for any $f\in L^\fz_b(\mu)$ and $\mu$-almost every $x\notin\supp f$,
\begin{equation*}
  Sf(x)=\int_\cx 0f(y)\ud\mu(y) = 0.
\end{equation*}
This gives that
\begin{eqnarray*}
  \supp \lf(S\lf(1_{Q^k_{\beta}\cap B_M}\r)\r)&&\subseteq\supp 1_{Q^k_{\beta}\cap B_M}
  =\overline{Q^k_{\beta}\bigcap B_M}\\
   &&\subseteq \overline{Q^k_{\beta}}\bigcup \overline B_M=\lf(Q^k_{\beta}\bigcap
   \overline B_M\r)\bigcup \lf(\partial Q^k_{\beta}\bigcap \overline B_M\r).
\end{eqnarray*}
Recall that $Q^k_{\alpha}$ and $Q^k_{\beta}$
are disjoint if $\alpha\neq\beta$,  which together with \eqref{4.11}
implies that almost every $x\in Q^k_{\alpha}\cap B_M$ is outside
$\supp (S(1_{Q^k_{\beta}\cap B_M}))$. Hence $S(1_{Q^k_{\beta}\cap B_M})(x)=0$
for $\mu$-almost every $x\in Q^k_{\alpha}\cap B_M$, and thus,
for $\mu$-almost every $x\in \cx$,
\begin{equation*}
  1_{Q^k_{\alpha}\cap B_M}(x)S\lf(1_{Q^k_{\beta}\cap B_M}\r)(x)
  =\delta_{\alpha\beta}1_{Q^k_{\alpha}\cap B_M}(x)
  S\lf(1_{Q^k_{\alpha}\cap B_M}\r)(x)
  =\delta_{\alpha\beta}S\lf(1_{Q^k_{\alpha}\cap B_M}\r)(x),
\end{equation*}
where $\dz_{\az\bz}\equiv1$ if $\az=\bz$ and $\dz_{\az\bz}\equiv0$ otherwise,
and the last equality follows from the fact that $1_{Q^k_{\alpha}\cap B_M}(x)=1$
for $\mu$-almost every $x\in\supp (S(1_{Q^k_{\alpha}\cap B_M}))$.
Multiplying \eqref{4.14} by
$1_{Q^k_{\alpha}\cap B_M}$ gives
\begin{equation*}
  1_{Q^k_{\alpha}\cap B_M}(x)S\lf(1_{B_M}\r)(x)=\sum_{\beta}
  1_{Q^k_{\alpha}\cap B_M}(x)S\lf(1_{Q^k_{\beta}\cap B_M}\r)(x)
  =S\lf(1_{Q^k_{\alpha}\cap B_M}\r)(x),
\end{equation*}
which is precisely \eqref{4.13}.

Now it is easy to complete the proof of \eqref{4.9}. For any $f$ of the form
\eqref{4.12}, it follows from \eqref{4.13} that
\begin{equation}\label{4.15}
  Sf=\sum_{\alpha}x^k_{\alpha}S\lf(1_{Q^k_{\alpha}\cap B_M}\r)=
  \sum_{\alpha}x^k_{\alpha}1_{Q^k_{\alpha}\cap B_M}S\lf(1_{B_M}\r)
  =fS\lf(1_{B_M}\r).
\end{equation}
 On the other hand, recall that martingale convergence implies that
for any $f\in\lo$,
\begin{equation*}
  \Exp_kf\equiv\sum_{\alpha} \ave{f}_{Q^k_{\alpha}} 1_{Q^k_{\alpha}} \to f
\end{equation*}
for $\mu$-almost every $x\in\cx$ and in $L^p(\mu)$ as $k\to\infty$.
If $f\in L^p(\mu)$ is general, apply \eqref{4.15} to
$\Exp_k f\cdot 1_{B_M}$.
Then as $k\to\infty$, we have $\Exp_k f\cdot 1_{B_M}\to f\cdot1_{B_M}$
in $L^p(\mu)$, hence $S(\Exp_k f\cdot1_{B_M})\to S(f\cdot 1_{B_M})$ in
$L^{p,\,\infty}(\mu)$,
and thus almost everywhere for a subsequence. Also,  by \eqref{4.15}, we obtain that
$$S\lf(\Exp_k f\cdot1_{B_M}\r)=\Exp_k f\cdot 1_{B_M}\cdot S\lf(1_{B_M}\r)
\to f\cdot 1_{B_M}\cdot S\lf(1_{B_M}\r)$$
for $\mu$-almost every $x\in\cx$. As a result,
for all $f\in L^p(\mu)$,
\begin{equation*}
  S(f\cdot 1_{B_M})= f\cdot1_{B_M}\cdot S\lf(1_{B_M}\r)
  \equiv  f\cdot 1_{B_M}\cdot b_M
\end{equation*}
where $b_M\equiv S\lf(1_{B_M}\r)\in L^{p,\,\infty}(\mu)$
since $1_{B_M}\in L^p(\mu)$. Thus, \eqref{4.9} holds for all $f\in L^p(\mu)$
with $\supp f\subseteq B_M$.

It remains to prove \eqref{4.10}. Let $\lz\in(0, \fz)$,
$f\equiv 1_{\{\abs{b_M}>\lambda\}\cap B_M}$ and
$$B\equiv \|S\|_{L^p(\mu)\to L^{p,\infty}(\mu)}.$$
Then $\|f\|_{L^p(\mu)}=[\mu(\{x\in\cx:\ \abs{b_M(x)}>\lambda\}\cap B_M)]^{1/p}$.
By this,
\eqref{4.9} and the boundedness of $S$ from
$L^p(\mu)$ to $L^{p,\infty}(\mu)$, we see that
\begin{eqnarray*}
  &&\lambda\lf[\mu(\{x\in\cx:\ \abs{b_M(x)}>\lambda\}\cap B_M)\r]^{1/p} \\
   &&\hs=\lambda\lf[\mu(\{x\in\cx:\ \abs{b_M(x)f(x)}>\lambda\})\r]^{1/p} \\
 &&\hs=\lambda\lf[\mu(\{x\in\cx:\ \abs{Sf(x)}>\lambda\})\r]^{1/p} \\
 &&\hs\le\|Sf\|_{L^{p,\infty}(\mu)}\leq B\|f\|_{L^p(\mu)} \\
 &&\hs= B\lf[\mu(\{x\in\cx:\ \abs{b_M(x)}>\lambda\}\cap B_M)\r]^{1/p}.
\end{eqnarray*}
This means that either $\mu(\{x\in\cx:\ \abs{b_M(x)}
>\lambda\}\cap B_M)=0$ or $\lambda\le
B$, which is the same as $\Norm{b_M}{L^{\infty}(\mu_M)}\le B$. This
implies \eqref{4.10}, and hence finishes the proof of Proposition \ref{p4.1}.
\end{proof}

From Proposition~\ref{p4.1}, we easily deduce the following consequence.

\begin{lem}\label{l4.3}
Let $T$ and $\wz T$ be Calder\'{o}n-Zygmund operators which
have the same kernel satisfying
\eqref{1.5} and \eqref{1.6} and are both bounded from $\lo$ to
$L^{1,\,\fz}(\mu)$. Assume that $\wz T$ is bounded on $\lt$.
Then $T$ is also bounded on $\lt$.
\end{lem}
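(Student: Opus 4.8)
The plan is to derive the lemma as an immediate consequence of Proposition~\ref{p4.1} applied with $p=1$. First I would set $S\equiv T-\wz T$ and verify that $S$ satisfies the hypotheses of Proposition~\ref{p4.1}: since $T$ and $\wz T$ are both bounded from $\lo$ to $\wlo$ and $\wlo$ is a quasi-normed space, the operator $S$ is also bounded from $\lo$ to $\wlo$; moreover, $S$ has the zero kernel because $T$ and $\wz T$ are associated with the same standard kernel. Hence Proposition~\ref{p4.1} provides $b\in\lin$ with $\|b\|_\lin\le\|S\|_{\lo\to\wlo}$ such that $Tf-\wz Tf=bf$ for all $f\in\lo$.

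Next I would exploit this identity on the dense class $L^\fz_b(\mu)$ of $\lt$. For every $f\in L^\fz_b(\mu)\subseteq\lo\cap\lt$ we have $Tf=\wz Tf+bf$, so the $\lt$-boundedness of $\wz T$ together with the boundedness of multiplication by $b\in\lin$ on $\lt$ yields
$$\|Tf\|_\lt\le\|\wz Tf\|_\lt+\|bf\|_\lt\le\lf(\|\wz T\|_{\lt\to\lt}+\|b\|_\lin\r)\|f\|_\lt.$$
Since $L^\fz_b(\mu)$ is dense in $\lt$ (simple functions supported on sets of finite $\mu$-measure being dense, as $\mu$ is a Borel measure on a separable metric space), the operator $T$, which is a priori defined on $L^\fz_b(\mu)$, extends uniquely to a bounded operator on $\lt$; that is, $T$ is bounded on $\lt$, as desired.

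I do not expect a genuine obstacle here: all the substantial work has already been carried out in Proposition~\ref{p4.1}, and what remains is the routine check that $S=T-\wz T$ has the zero kernel and is bounded from $\lo$ to $\wlo$, together with a standard density argument. The only points needing a moment's care are the quasi-triangle inequality for $\wlo$ (so that $S$ inherits the weak type $(1,1)$ bound, with constant at most a fixed multiple of the sum of those for $T$ and $\wz T$) and the density of $L^\fz_b(\mu)$ in $\lt$.
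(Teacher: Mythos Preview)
Your proposal is correct and follows essentially the same route as the paper: apply Proposition~\ref{p4.1} with $p=1$ to obtain $Tf=\wz Tf+bf$ for some $b\in\lin$, and then use the $\lt$-boundedness of $\wz T$ together with the obvious $\lt$-bound for multiplication by $b$. You spell out a few details (the quasi-triangle inequality in $\wlo$ and the density of $L^\fz_b(\mu)$ in $\lt$) that the paper leaves implicit, but the argument is the same.
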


\begin{proof}
By Proposition~\ref{p4.1}, we have $Tf=\wz Tf+bf$,
where $b\in L^{\infty}(\mu)$. Hence
\begin{equation*}
  \|Tf\|_{L^2(\mu)}\leq\|\wz Tf\|_{L^2(\mu)}+\|bf\|_{L^2(\mu)}
  \leq\big(\|\wz T\|_{L^2(\mu)\to L^2(\mu)}+\|b\|_{L^{\infty}(\mu)}
  \big)\|f\|_{L^2(\mu)},
\end{equation*}
which completes the proof of Lemma \ref{l4.3}.
\end{proof}

\begin{proof}[Proof of Theorem \ref{t1.1}, Part II]
In this part, we show that (iii) of Theorem \ref{t1.1} implies (i)
of Theorem \ref{t1.1}. Let $\mu_M\equiv \mu|_{\overline B(x_0, M)}$ be as before.
The assumption clearly implies that $T$ is bounded from
$L^1(\mu_M)$ to $L^{1,\,\infty}(\mu_M)$, with a norm bound independent of $M$.
We will then prove that $T$ is bounded on $L^2(\mu_M)$,
still with a bound independent of $M$. By the
density of boundedly supported $L_{\loc}^2(\mu)$-functions in $L^2(\mu)$
and the monotone convergence, this suffices to conclude the
proof of (iii) $\Rightarrow$ (i) of Theorem~\ref{t1.1}. Thus, from now on we work
with the measure $\mu_M$, recalling that it satisfies,
uniformly in $M$, the same assumptions as $\mu$, so that
everything shown for $\mu$ above equally well applies to $\mu_M$.

By Theorem \ref{t4.1}, we see that $T^\sharp$ is bounded from $L^1(\mu_M)$ to
$L^{1,\,\fz}(\mu_M)$, which implies that $\{T_r\}_{r\in(0, \fz)}$ is uniformly
bounded from $L^1(\mu_M)$ to $L^{1,\,\fz}(\mu_M)$, and the bound (denoted by
$N_1$) depends only on the norm of $T$ as the operator from
$L^1(\mu)$ to $L^{1,\fz}(\mu)$.

Let $p\in (1, \fz)$. It follows from Lemma \ref{l4.1} that for any
$r\in(0, \fz)$, $T_r$ is bounded on $L^p(\mu_M)$ with $p\in(1, \fz)$,
but with the norm a priori depending on $M$ and $r$.
We claim, however, that $\{T_r\}_{r\in(0, \fz)}$ is uniformly bounded
on $L^2(\mu_M)$. That is, if we denote the corresponding norm by
$N_p(r,M)$, then we have that there exists a  positive constant $C$
depending on $N_1$, but not on $r$ or $M$, such that
\begin{equation}\label{4.16}
N_2(r, M)\le C.
\end{equation}

To this end, we define for any $r\in(0, \fz)$ and $x\in\cx$,
$$T_r^\psi f(x)\equiv\int_\cx K(x,y)\psi\Big(\frac{d(x,y)}{r}
\Big)f(y)\,d\mu(y),$$
where $\psi$ is a smooth function on $(0, \fz)$ such
that $\supp\psi\subseteq[1/2, \fz)$, $\psi(t)\in[0,1]$
for all $t\in(0, \fz)$, and
$\psi(t)\equiv 1$ when $t\in[1,\fz)$, and $K$ is the kernel of $T$.
It follows, from the definition of $T^\psi_r$, \eqref{1.5}
and \eqref{1.3}, that for any $x\in\cx$,
\begin{eqnarray*}
\lf|T_rf(x)-T_r^\psi f(x)\r|
&&\le\int_{\overline B(x,\,r)\setminus B(x,\,r/2)}|K(x,y)||f(y)|\,d\mu(y)\\
&&\ls\dint_{\overline B(x,\,r)}\frac{|f(y)|}{\lz(x,r/2)}\,d\mu(y)\ls\cm f(x).
\end{eqnarray*}
This fact, together with Lemma \ref{l2.3}(i), implies
that the boundedness of $T_r$ on
$L^p(\mu_M)$ for
$p\in (1, \fz)$ or from $L^1(\mu_M)$ to $L^{1,\,\fz}(\mu_M)$
is equivalent to that of
$T_r^\psi$. Moreover, if $\{T_r\}_{r\in(0, \fz)}$ is uniformly bounded
on $L^p(\mu_M)$ or from $L^1(\mu_M)$ to $L^{1,\,\fz}(\mu_M)$, then so is
$\{T_r^\psi\}_{r\in(0, \fz)}$; and vice verse.

Now we denote by $\wz N_p(r, M)$ the norm of $T_r^\psi$
on $L^p(\mu_M)$ and by $\wz N_1$ the (finite) supremum
over $r$ and $M$ of the norms of $T_r^{\psi}$ from
$L^1(\mu_M)$ to $L^{1,\,\fz}(\mu_M)$.
Then to show \eqref{4.16}, we only need to
prove that
\begin{equation}\label{4.17}
\wz N_2(r, M)\le \wz C
\end{equation}
for some positive constant $\wz C$ independent of $r$ and $M$.

We now prove \eqref{4.17}. Observe that for each $r$, $T_r^\psi$ is
bounded on $L^2(\mu_M)$ and from $L^{1}(\mu_M)$
to $L^{1,\,\fz}(\mu_M)$. Then from the Marcinkiewicz interpolation
theorem, we deduce that $T_r^\psi$ is bounded on $L^{\frac43}(\mu_M)$
and $\wz N_{\frac43}(r,
M)\ls \wz N_1^\frac{1}{2}[\wz N_2(r, M)]^\frac{1}{2}$. By duality,
the right hand side gives also the bound for the norm of
$(T_r^\psi)^\ast$ on $L^4(\mu_M)$. Observe that
\begin{equation*}
(T^\psi_r)^\ast (g)(x)=\dint_\cx\overline{K(y,x)\psi
\lf(\frac{d(x, y)}{r}\r)}g(y)\,d\mu_M(y).
\end{equation*}
Then $(T_r^\psi)^\ast$ is also a Calder\'on-Zygmund operator.
Thus $(T_r^\psi)^\ast$ is bounded from
$L^1(\mu_M)$ to $L^{1,\,\fz}(\mu_M)$ and the norm is bounded by  $c\wz
N_1^\frac{1}{2}[\wz N_2(r, M)]^\frac{1}{2}+\wz c$ for some positive
constants $c$ and $\wz c$. Another application of the Marcinkiewicz
interpolation theorem yields that the norm of $(T_r^\psi)^\ast$ on
$L^{\frac43}(\mu_M)$ is also bounded by $c\wz
N_1^\frac{1}{2}[\wz N_2(r, M)]^\frac{1}{2}+\wz c$. By duality, we
further see that $\wz N_4(r, M)\le c\wz N_1^\frac{1}{2}[\wz N_2(r,
M)]^\frac{1}{2}+\wz c$. Using interpolation again, we have that $\wz
N_2(r, M)\le c\wz N_1^\frac{1}{2}[\wz N_2(r, M)]^\frac{1}{2}+\wz c$,
from which \eqref{4.17} follows. Thus, \eqref{4.16} holds and the
claim is true.

As a result of \eqref{4.16}, we see that $\{T_r\}_{r\in(0, \fz)}$ is uniformly
bounded on $L^2(\mu_M)$, with bounds also uniform in $M$. By letting $M\to\fz$, we
have that $\{T_r\}_{r\in(0, \fz)}$ is uniformly bounded on $\lt$.
Then there exists a weak limit $\tilde{T}$ bounded on $L^2(\mu)$
and some sequence $r_i\to 0$ as $i\to \fz$. That is,
for all $f\in L^2(\mu)$ and $g\in L^2(\mu)$,
\begin{equation*}
  \pair{g}{\tilde{T}f}=\lim_{r_i\to 0}\pair{g}{T_{r_i}f}.
\end{equation*}

By a standard argument (see, for example, \cite[Proposition
8.1.11]{g09}), it is easy to check that $\tilde{T}$ is a
Calder\'on--Zygmund operator with the same kernel $K(x,y)$ as $T$.
It follows, from ${\rm (i)}\Rightarrow{\rm (iii)}$ of
Theorem \ref{t1.1} for the operator $\tilde{T}$,
that $\tilde{T}$ is also bounded
from $L^1(\mu)$ to $L^{1,\,\infty}(\mu)$.
Applying Lemma \ref{l4.3}, we have that $T$ is also bounded
on $\lt$. This finishes the proof of (iii) $\Rightarrow$ (i)
of Theorem \ref{t1.1} and hence the proof of Theorem \ref{t1.1}.
\end{proof}

\section{Proof of Corollary \ref{c1.1}}\label{s5}

\hskip\parindent
As an application of Theorem \ref{t1.1}, we
prove Corollary \ref{c1.1} in this section.
We begin with an inequality for $T^\sharp$
on the elementary measures.

\begin{lem}\label{l5.1}
Let $p\in(0,1)$ and $T$ be a Calder\'on-Zygmund operator
with kernel satisfying \eqref{1.5} and \eqref{1.6},
which is bounded on $\lt$. Then there
exist positive constants $C$ and $C(p)$ such that
for all elementary measures
$\nu=\sum_i\alpha_i\delta_{x_i}$ and $x\in\supp\mu$,
\begin{equation}\label{5.1}
\lf[T^{\sharp}\nu(x)\r]^p\le C\lf[\cm_pT\nu(x)\r]^p+C(p)[\cm\nu(x)]^p.
\end{equation}
\end{lem}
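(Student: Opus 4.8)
The plan is to establish, for each fixed $r\in(0,\fz)$ and each $x\in\supp\mu$, the pointwise Cotlar-type bound
$$|T_r\nu(x)|^p\ls[\cm_p(T\nu)(x)]^p+[\cm\nu(x)]^p$$
with implicit constants independent of $r$ and of the elementary measure $\nu$; taking the supremum over $r>0$ then yields \eqref{5.1}, since $t\mapsto t^p$ is increasing. The argument runs entirely parallel to the proofs of Lemma \ref{l3.1} and Theorem \ref{t4.1}, with $\nu$ playing the role of $f\,d\mu$ and $\cm\nu$ that of $\cm f$. The one essentially new ingredient is the weak-type estimate $\|T\eta\|_\wlo\ls\|\eta\|$ valid for every elementary measure $\eta$, which is exactly Theorem \ref{t3.1}; it is also this ingredient that forces the exponent $p<1$, since, unlike in Lemma \ref{l3.1} where the $\lt$-boundedness of $T$ is used directly on a truncation of $f$, here the localized piece of $\nu$ is again merely a measure and so must be handled through Theorem \ref{t3.1} together with the Kolmogorov inequality.

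First I would fix $r$, set $r_j\equiv5^jr$ and $\mu_j\equiv\mu(\overline B(x,r_j))$, and, exactly as in Lemma \ref{l3.1}, choose $k\in\nn$ to be the smallest integer with $\mu_{k+1}\le4C_\lz^6\mu_{k-1}$ (such $k$ exists because $\mu(\overline B(x,\rho))>0$ for all $\rho>0$ when $x\in\supp\mu$) and put $R\equiv r_{k-1}$, so that \eqref{3.2} holds. Decomposing $\overline B(x,5R)\setminus\overline B(x,r)$ into the annuli $\overline B(x,r_j)\setminus\overline B(x,r_{j-1})$, $j\in\{1,\dots,k\}$, and using \eqref{1.5}, \eqref{1.3}, the bound $\nu(\overline B(x,r_j))\le\cm\nu(x)\,\mu(\overline B(x,5r_j))$, and the geometric summation carried out in \eqref{3.3}, I obtain
$$|T_r\nu(x)-T_{5R}\nu(x)|\le\int_{\overline B(x,5R)\setminus\overline B(x,r)}|K(x,y)|\,d\nu(y)\ls\cm\nu(x).$$
Next I split $\nu=\nu_1+\nu_2$ with $\nu_1\equiv\nu|_{\overline B(x,5R)}$ and $\nu_2\equiv\nu-\nu_1$; then $\nu_1$ is again an elementary measure (or the zero measure), $\|\nu_1\|=\nu(\overline B(x,5R))$, and $T\nu_2(x)=T_{5R}\nu(x)$. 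For $u\in\overline B(x,R)$ and $y\in\supp\nu_2$ one has $d(x,y)\ge5R\ge2\,d(x,u)$, so \eqref{1.6} and the annular decomposition (as in the proof of Lemma \ref{l2.4}) give $|T\nu_2(x)-T\nu_2(u)|\ls\cm\nu(x)$.

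Combining these estimates, for $\mu$-almost every $u\in\overline B(x,R)$,
$$|T_r\nu(x)|\le|T_r\nu(x)-T_{5R}\nu(x)|+|T\nu_2(x)-T\nu_2(u)|+|T\nu(u)|+|T\nu_1(u)|\ls\cm\nu(x)+|T\nu(u)|+|T\nu_1(u)|.$$
I then raise this to the power $p\in(0,1)$, using $(a+b+c)^p\le a^p+b^p+c^p$, and average over $u\in\overline B(x,R)$ with respect to $\mu$. The term $[\mu(\overline B(x,R))]^{-1}\int_{\overline B(x,R)}|T\nu(u)|^p\,d\mu(u)$ is dominated by $[\cm_p(T\nu)(x)]^p$ after using \eqref{3.2} to pass from $\mu(\overline B(x,R))$ to $\mu(\overline B(x,5R))$ in the denominator; and for the term with $|T\nu_1(u)|^p$ I apply the Kolmogorov inequality on $\overline B(x,R)$ together with $\|T\nu_1\|_\wlo\ls\|\nu_1\|$ from Theorem \ref{t3.1}, obtaining
$$\frac1{\mu(\overline B(x,R))}\int_{\overline B(x,R)}|T\nu_1(u)|^p\,d\mu(u)\ls\frac{\|\nu_1\|^p}{[\mu(\overline B(x,R))]^p}\ls\frac{[\nu(\overline B(x,5R))]^p}{[\mu(\overline B(x,25R))]^p}\le[\cm\nu(x)]^p,$$
where the last two steps use \eqref{3.2} and the definition of $\cm\nu$. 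Assembling these gives the desired pointwise bound with constants independent of $r$ and $\nu$, the sole $p$-dependence (through the Kolmogorov constant) attaching to $[\cm\nu(x)]^p$, which matches the form of \eqref{5.1}. I do not expect a serious new difficulty here; the point to watch is the uniformity of the constants, specifically that Theorem \ref{t3.1} furnishes a bound uniform over all elementary measures, so that the restriction $\nu_1$ is controlled with no dependence on $N$ or on the $x_i$.
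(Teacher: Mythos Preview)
Your proof is correct and follows essentially the same approach as the paper's. The only cosmetic differences are that the paper packages your direct kernel-regularity estimate of $|T\nu_2(x)-T\nu_2(u)|$ as an application of \eqref{2.4} to $T^*(\dz_x-\dz_u)$, and it writes out the layer-cake computation for the $T\nu_1$ term by hand rather than citing Kolmogorov's inequality; both of these amount to the same calculations you describe.
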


\begin{proof}
As in Lemma \ref{l3.1}, let $r\in(0, \fz)$, $r_j\equiv5^jr$,
$\mu_j\equiv\mu(\overline B(x,\,r_j))$ for
$j\in\zz_+$, $k$ be the {\it smallest positive integer} such that
$\mu_{k+1}\le4C_{\lz}^6\mu_{k-1}$ and $R\equiv r_{k-1}=5^{k-1}r$.
Similarly to the proof of \eqref{3.3}, we have
\begin{equation}\label{5.2}
|T_r\nu(x)-T_{5R}\nu(x)|\ls\cm\nu(x).
\end{equation}
Now decompose the measure $\nu$ as $\nu=\nu_1+\nu_2$, where
$$\nu_1\equiv\dsum_{i:\, x_i\in
\overline B(x,\,5R)}\az_i\dz_{x_i}$$
and
$$\nu_2\equiv\dsum_{i:\, x_i\notin \overline
B(x,\,5R)}\az_i\dz_{x_i}.$$
Applying \eqref{2.4} to $T^\ast$,
we have that for any $\wz{x}\in\overline B(x,R)$,
\begin{eqnarray*}
|T_{5R}\nu(x)-T\nu_2(\wz{x})|&&
=\lf|\int_\cx K(x,y)\chi_{\cx\setminus
\overline B(x,\, 5R)}(y)\,d\nu(y)-T\nu_2(\wz{x})\r|\\
&&=\lf|\int_\cx K(x,y)\,d\nu_2(y)-T\nu_2(\wz{x})\r|\nonumber\\
&&=|T\nu_2(x)-T\nu_2(\wz{x})|=|\langle\dz_x,\,T\nu_2\rangle-
\langle\dz_{\wz{x}},\,T\nu_2\rangle|\nonumber\\
&&\le\int_\cx|T^{*}(\dz_x-\dz_{\wz{x}})(y)|\,d\nu_2(y)\nonumber\\
&&\le\int_{\cx\setminus \overline
B(x,\,5R)}|T^{*}(\dz_x-\dz_{\wz{x}})(y)|\,d\nu(y)\ls \cm\nu(x).\nonumber
\end{eqnarray*}
This implies that
\begin{equation}\label{5.3}
{\rm H}_1\equiv\frac{1}{\mu(\overline B(x,R))}
\int_{\overline B(x,\,R)}|T_{5R}\nu(x)-T\nu_2(\wz{x})|^p
\,d\mu(\wz{x})\ls[\cm \nu(x)]^p.
\end{equation}

On the other hand, write
\begin{eqnarray*}
{\rm H}_2&&\equiv\frac{1}{\mu(\overline B(x,R))} \int_{\overline
B(x,\,R)}|T\nu_2(\wz{x})-T\nu(\wz{x})|^p
\,d\mu(\wz{x})\\
&&=\frac{1}{\mu(\overline B(x,R))} \int_{\overline B(x,\,R)}|T\nu_1(\wz{x})|^p
\,d\mu(\wz{x})\nonumber\\
&&=\frac{1}{\mu(\overline B(x,R))} \int_0^{\fz}ps^{p-1}\mu\lf(\lf\{\wz{x}\in
\overline B(x,R):\, |T\nu_1(\wz{x})|>s\r\}\r)\,ds.\nonumber
\end{eqnarray*}
Since $T$ is bounded on $\lt$, by Theorem \ref{t3.1}, we have that
for every $s\in(0, \fz)$,
\begin{eqnarray}\label{5.4}
\mu\lf(\lf\{\wz{x}\in\overline B(x, R):\,|T\nu_1(\wz{x})|>s\r\}\r)\ls\min
\lf(\mu\lf(\overline B(x,
R)\r),\ \frac{\|\nu_1\|}{s}\r).
\end{eqnarray}
Observe that $\|\nu_1\|=\nu(\overline B(x, 5R))$.
This, together with \eqref{5.4}, the definition of $\cm\nu$ and \eqref{3.2},
gives that
\begin{eqnarray*}
\mu\lf(\lf\{\wz{x}\in\overline B(x, R):\,|T\nu_1(\wz{x})|>s\r\}\r)\,ds
&&\ls\mu\lf(\overline B(x, R)\r)\min\lf(1,\ \frac{1}{s}
\frac{\nu(\overline B(x, 5R))}{\mu(\overline B(x,R))}\r)\nonumber\\
&&\ls\mu\lf(\overline B(x, R)\r)\min\lf(1,\ \frac{1}{s}\cm\nu(x)\r),\nonumber
\end{eqnarray*}
which further implies that
\begin{eqnarray*}
{\rm H}_2&&\ls\int_0^\fz
ps^{p-1}\min\lf(1,\ \frac{1}{s}\cm\nu(x)\r)\,ds\nonumber\\
&&\sim\int_0^{\cm\nu(x)}ps^{p-1}\,ds\
+\int_{\cm\nu(x)}^{\fz}ps^{p-2}\cm\nu(x)\,ds\ls\lf[\cm\nu(x)\r]^p.\nonumber
\end{eqnarray*}
From this combined with \eqref{5.3}, we deduce that
\begin{eqnarray*}
&&\frac{1}{\mu(\overline B(x,R))} \int_{\overline
B(x,R)}\lf|T_{5R}\nu(x)-T\nu(\wz{x})\r|^p
\,d\mu(\wz{x})\ls {\rm H_1}+{\rm H}_2\ls[\cm\nu(x)]^p.\nonumber
\end{eqnarray*}
Using this and \eqref{5.2}, we see that
\begin{eqnarray*}
|T_r\nu(x)|^p&&=\frac{1}{\mu(\overline B(x,R))}
\int_{\overline B(x,R)}|T_r\nu(x)|^p\,d\mu(\wz{x})\\
&&\le\frac{1}{\mu(\overline B(x,R))} \int_{\overline B(x,R)}\lf[|T_r\nu(x)
-T_{5R}\nu(x)|^p\r.\nonumber\\
&&\hs\lf.+|T_{5R}\nu(x)-T\nu(\wz{x})|^p+|T\nu(\wz{x})|^p\r]
\,d\mu(\wz{x})\nonumber\\
&&\ls [\cm\nu(x)]^p+\frac{1}{\mu(\overline B(x,R))} \int_{\overline
B(x,R)}|T\nu(\wz{x})|^p\,d\mu(\wz{x})\nonumber\\
&&\ls[\cm\nu(x)]^p+\lf[\cm_pT\nu(x)\r]^p. \nonumber
\end{eqnarray*}
Taking the supremum over $r>0$, we see that \eqref{5.1} holds,
which completes the proof of Lemma \ref{l5.1}.
\end{proof}

As a result of Lemma \ref{l5.1}, by Theorem \ref{t3.1} and (i) and (ii) of
Lemma \ref{l2.3}, we have the following corollary.

\begin{prop}\label{p5.1}
Let $T$ be a Calder\'on-Zygmund operator with kernel satisfying
\eqref{1.5} and \eqref{1.6}, which is bounded on $\lt$. Then there exists a
positive constant $C$ such that for all elementary measures $\nu\in
\mathscr M(\cx)$,
$$\lf\|T^\sharp\nu\r\|_{L^{1,\,\fz}(\mu)}\le C\|\nu\|.$$
\end{prop}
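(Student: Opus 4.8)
The plan is to reduce the weak type $(1,1)$ estimate for $T^\sharp\nu$ on general elementary measures to the two ingredients already in hand: the pointwise Cotlar-type bound of Lemma \ref{l5.1}, which for $p\in(0,1)$ gives $[T^\sharp\nu(x)]^p\lesssim[\cm_pT\nu(x)]^p+[\cm\nu(x)]^p$ for all $x\in\supp\mu$, together with the endpoint bounds for $T\nu$ (Theorem \ref{t3.1}) and for the maximal functions $\cm_p$ and $\cm\nu$ (Lemma \ref{l2.3}). Fixing some $p\in(0,1)$, raising the Cotlar inequality to the power $1/p$ is not directly useful; instead I would work with the $L^{1,\infty}$ quasi-norm directly and use that $a\mapsto a^{1/p}$ is not needed because we can just estimate the level sets of $T^\sharp\nu$ by the level sets of the two terms on the right.

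\textbf{Step 1: Split the level set.} For $t\in(0,\fz)$, write
\begin{equation*}
  \mu\lf(\lf\{x\in\supp\mu:\,T^\sharp\nu(x)>t\r\}\r)
  \le\mu\lf(\lf\{x:\,\cm_pT\nu(x)>ct\r\}\r)+\mu\lf(\lf\{x:\,\cm\nu(x)>ct\r\}\r)
\end{equation*}
using Lemma \ref{l5.1}, where $c$ is a constant depending only on the implicit constants there (here one uses that for $p\in(0,1)$ the inequality $[A+B]^p\le A^p+B^p$ lets one pass from the pointwise bound on $[T^\sharp\nu]^p$ to a bound on $T^\sharp\nu$ controlled by a constant times $\max\{\cm_pT\nu,\cm\nu\}$). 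Since $T^\sharp\nu(x)=0$ identically off $\supp\mu$ is not quite true, but $T^\sharp\nu$ is only being measured against $\mu$, which is supported on $\supp\mu$, this restriction is harmless.

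\textbf{Step 2: Estimate each piece.} For the second term, Lemma \ref{l2.3}(iii) gives immediately $\mu(\{\cm\nu>ct\})\lesssim\|\nu\|/t$. For the first term, by Theorem \ref{t3.1} (extended to signed elementary measures as in Remark \ref{r3.3}(i), or simply used for positive $\nu$) we have $T\nu\in\wlo$ with $\|T\nu\|_\wlo\lesssim\|\nu\|$; then since $p\in(0,1)$, Lemma \ref{l2.3}(ii) says $\cm_p$ is bounded on $\wlo$, so $\|\cm_pT\nu\|_\wlo\lesssim\|T\nu\|_\wlo\lesssim\|\nu\|$, whence $\mu(\{\cm_pT\nu>ct\})\lesssim\|\nu\|/t$. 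Adding the two estimates and taking the supremum over $t>0$ yields $\|T^\sharp\nu\|_\wlo\lesssim\|\nu\|$, which is the assertion.

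\textbf{The main obstacle} is essentially bookkeeping rather than conceptual: one must be careful that the constant $C$ in the conclusion does not secretly depend on $\nu$ through $N$ or through the points $x_i$ — but this is guaranteed because every intermediate constant (in Lemma \ref{l5.1}, Theorem \ref{t3.1}, Lemma \ref{l2.3}) was asserted to be independent of the elementary measure. A second minor point is the passage from the Cotlar inequality for $[T^\sharp\nu]^p$ to a level-set inequality for $T^\sharp\nu$ itself; this is routine using $p<1$ and the elementary fact $(A+B)^p\le A^p+B^p$, together with $\{A^p+B^p>t^p\}\subseteq\{A>2^{-1/p}t\}\cup\{B>2^{-1/p}t\}$. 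With these in place the proof is short, and indeed the proposition is stated exactly as the expected corollary of Lemma \ref{l5.1}, so no new machinery is required.
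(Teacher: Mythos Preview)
Your proposal is correct and takes essentially the same approach as the paper, which simply records Proposition \ref{p5.1} as an immediate consequence of Lemma \ref{l5.1}, Theorem \ref{t3.1}, and Lemma \ref{l2.3} without spelling out the details. Your invocation of Lemma \ref{l2.3}(iii) for the $\cm\nu$ term is in fact the precise reference (the paper cites (i) and (ii), which appears to be a minor slip; part (iii) is what handles the measure-valued maximal function), and your level-set splitting via $(A+B)^p\le A^p+B^p$ for $p<1$ is exactly the routine bookkeeping needed to pass from the pointwise inequality of Lemma \ref{l5.1} to the weak $(1,1)$ bound.
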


\begin{proof}[Proof of Corollary \ref{c1.1}]
By Theorem \ref{t1.1}, Remark \ref{r3.1}, Lemma \ref{l2.3}(i) and
a density argument, we have (i). To prove (ii),
it suffices to prove \eqref{1.8}, since for any $f\in\lo$, if we
define $d\nu\equiv fd\mu$, then we see that $\nu\in\mathscr M(\cx)$
and \eqref{1.9} follows from \eqref{1.8}. Moreover, recall that for
any complex measure $\nu\in\mathscr M(\cx)$, $|\nu|(\cx)<\fz$; see,
for example, \cite[Theorem 6.4]{r}. Then by considering the Jordan
decompositions of real and imaginary parts of $\nu$, we only need to
prove \eqref{1.8} for any finite nonnegative measure.

To this end, assume that $\nu$ is a finite nonnegative measure and
fix $t>0$. We show that
$$\mu(\{x\in\cx:\, |T^{\sharp}\nu(x)|>t\})\ls
\dfrac{\|\nu\|}{t}.$$
Let $R>0$ and consider the truncated maximal
operator $T_R^{\sharp}\nu\equiv\sup_{r>R}|T_r\nu|$.
Since $T_R^{\sharp}\nu(x)$ increases to $T^{\sharp}\nu(x)$ pointwise
on $\cx$ as $R\to 0$, it suffices to show that there exists a positive
constant $C$ such that for every $R>0$,
\begin{equation}\label{5.5}
\mu\lf(\lf\{x\in\cx:\,\, \lf|T_R^{\sharp}\nu(x)\r|>t\r\}\r)\le \dfrac{C\|\nu\|}{t}.
\end{equation}

In what follows, we use $\mathbb{P}$ to denote a \emph{probability
measure} on a \emph{probability space}  $\boz$,
$\mathbb{P}(A)$ the \emph{probability of the event $A\subset\boz$},
$\mathbb{E}(\xi)$ the \emph{mathematical expectation of a random
variable $\xi\in L^1(\mathbb{P})$} and
$\mathbb{V}(\xi)\equiv\mathbb{E}[(\xi-\mathbb{E}\xi)^2]
=\mathbb{E}\xi^2-(\mathbb{E}\xi)^2$ the \emph{variance of} $\xi\in
L^2(\mathbb{P})$.

For each $N\in\nn$, consider the \emph{random elementary measure}
$\nu_N\equiv\frac{\|\nu\|}{N}\sum_{i=1}^N\dz_{x_i}$, where
the random points $\{x_i\}_{i=1}^N\subseteq\cx$ are independent and
$\mathbb P(\{x_i\in E\})=\nu(E)/\|\nu\|$ for every Borel set $E\subseteq \cx$.
This immediately implies that
\begin{equation*}
  \Exp f(x_i)=\frac{1}{\|\nu\|}\int_X f(z)d\nu(z)
\end{equation*}
for $f=1_E$ by definition, for simple functions $f$ by linearity,
and finally for all $f\in L^1(\nu)$ by approximation.
From this, we deduce that for every $x\in\cx$ and $r>R$,
\begin{equation}\label{5.6}
\mathbb{E}[(T_r\dz_{x_i})(x)]=\dfrac{1}{\|\nu\|}T_r\nu(x).
\end{equation}
Indeed,
\begin{align*}
  \|\nu\|\cdot\mathbb{E}[(T_r\dz_{x_i})(x)] &=\int_X (T_r\dz_{z})(x)d\nu(z) \\
  &=\int_X\int_{d(y,z)>r}K(x,y)d\delta_z(y)d\nu(z) \\
  &=\int_X  1_{d(x,z)>r} K(x,z)d\nu(z)=T_r\nu(x).
\end{align*}
Thus, \eqref{5.6} holds.

Fix some $x_0\in\cx$ and $M\in (R, \fz)$. On the other hand,
from \eqref{1.4} and \eqref{1.3}, we deduce that for
any $x\in \overline B(x_0, M)$,
\begin{equation*}
\lz(x_0, M)\ls\lz(x, M)\ls C_{\lz}^{1+\log_2(M/R)}\lz(x,R),
\end{equation*}
where $C_\lz$ is as in \eqref{1.3}. By this, the fact that $r>R$,
\eqref{5.6} and \eqref{1.5}, we have that
for any $x\in \overline B(x_0, M)$,
\begin{eqnarray}\label{5.7}
\mathbb{V}[T_r\dz_{x_i}(x)]&&\le\mathbb{E}\lf[|T_r\dz_{x_i}(x)|^2\r]
=\dint_\boz\lf[\dint_{\cx}K(x, y)
\,d\dz_{x_i}(y)\r]^2\,d\mathbb{P}\\
&&=\dint_\boz[K(x, x_i)]^2\chi_{\cx\setminus \overline B(x, \, r)}(x_i)\,d\mathbb{P}
\ls\dfrac1{[\lz(x,r)]^2}\ls\dfrac{C_\lz^{2[1+\log_2(M/R)]}}{[\lz(x_0, M)]^2}.\noz
\end{eqnarray}
Moreover, by \eqref{5.6}, we see that
\begin{equation}\label{5.8}
\mathbb{E}[(T_r\nu_N)(x)]=\dsum_{i=1}^N\dfrac{\|\nu\|}{N}
\mathbb{E}[(T_r\dz_{x_i})(x)]=T_r\nu(x).
\end{equation}
This, together with the Cauchy inequality and \eqref{5.7}, implies that
there exists a positive
constant $c$, independent of $x_0$, $M$, $r$, $R$ and $N$, such that
\begin{eqnarray*}
\mathbb{V}[T_r\nu_N(x)]&&=\dfrac{\|\nu\|^2}{N^2}\mathbb{V}
\lf[\sum_{i=1}^NT_r\dz_{x_i}(x)\r]
\le \dfrac{\|\nu\|^2}{N}\sum_{i=1}^N\mathbb{V}\lf[T_r\dz_{x_i}(x)\r]
\le c\dfrac{\|\nu\|^2}{N}\dfrac{C_\lz^{2[1+\log_2(M/R)]}}{[\lz(x_0, M)]^2}.
\end{eqnarray*}

Fix a  number $\gz\in(0,\fz)$ small enough. From the fact above,
the Chebyshev inequality and \eqref{5.8}, we deduce that for every point
$x\in\overline B(x_0, M)$ such that $|T_r\nu(x)|>t$,
\begin{eqnarray*}
\mathbb{P}(\{|T_r\nu_N(x)|\le(1-\gz)t\})
&&\le\mathbb{P}(\{|T_r\nu_N(x)-T_r\nu(x)|>\gz t\})\\
&&\le\dfrac{\mathbb{V}(T_r\nu_N)(x)}{\gz^2t^2}
\le c\dfrac{1}{\gz^2t^2} \dfrac{\|\nu\|^2}{N}
\dfrac{C_\lz^{2[1+\log_2(M/R)]}}{[\lz(x_0, M)]^2}\le\gz,
\end{eqnarray*}
provided $N\ge c\frac{\|\nu\|^2}{\gz^3t^2}
\frac{C_\lz^{2[1+\log_2(M/R)]}}{[\lz(x_0, M)]^2}$.
Since $r>R$ is arbitrary, we infer that for
each $x\in\cx$ satisfying
$T_R^\sharp\nu(x)>t$,
$$\mathbb{P}\lf(\lf\{T^\sharp_R\nu_N(x)\le(1-\gz)t\r\}\r)\le\gz.$$

Let $E$ be any given Borel set with $\mu(E)<\fz$ such that
$T_R^\sharp\nu(x)>t$ for every $x\in E$. Then
\begin{eqnarray*}
\mathbb{E}\lf(\mu\lf(\lf\{x\in E:\, T^\sharp_R\nu_N(x)\le(1-\gz)t\r\}\r)\r)
&&=\int_E\mathbb{P}\lf(\lf\{T^\sharp_R\nu_N(x)\le(1-\gz)t\r\}\r)\,d\mu(x)\\
&&\le\gz\mu(E).
\end{eqnarray*}
Thus there exists at least one choice of points $\{x_i\}_{i=1}^N$
such that $\mu(\{x\in E:\,T^\sharp_R\nu_N(x)\le(1-\gz)t\})
\le\gz\mu(E)$, and therefore,
$\mu(\{x\in E:\, T^\sharp_R\nu_N(x)>(1-\gz)t\})\ge(1-\gz)\mu(E)$.
From this together with Proposition \ref{p5.1}, it follows that
\begin{eqnarray*}
\mu(E)&&\le\dfrac{1}{1-\gz}\mu\lf(\lf\{x\in E:\,T^\sharp_R\nu_N(x)>(1-\gz)t\r\}\r)\\
&&\le\dfrac{1}{(1-\gz)^2t}\lf\|T^\sharp_R\nu_N\r\|_{L^{1,\,\fz}(\mu)}
\ls\dfrac{1}{(1-\gz)^2t}\|\nu_N\| \ls\dfrac{1}{(1-\gz)^2t}\|\nu\|.
\end{eqnarray*}

Since $\gz>0$ is arbitrary, we obtain that
$\mu(E)\ls\dfrac{\|\nu\|}{t}$. As $E$ is an arbitrary subset
of finite measure of the set of the points $x\in\cx$ for which
$T_R^\sharp\nu(x)>t$, we obtain \eqref{5.5}, which completes the
proof of Corollary \ref{c1.1}.
\end{proof}

\begin{rem}\label{r5.1}\rm
If we replace the assumption of Corollary \ref{c1.1} that $T$
is bounded on $\lt$ by that $T$ is bounded on
$\lq$ for some $q\in(1, \fz)$, then Corollary \ref{c1.1} still holds.
\end{rem}

\bigskip

\noindent Tuomas Hyt\"onen

\medskip

\noindent Department of Mathematics and Statistics, University of
Helsinki, Gustaf H\"allstr\"omin Katu 2B, Fi-00014 Helsinki, Finland

\medskip

\noindent{\it E-mail address}: \texttt{tuomas.hytonen@helsinki.fi}

\medskip

\noindent Suile Liu and Dachun Yang (Corresponding author)

\medskip

\noindent School of Mathematical Sciences,
 Beijing Normal University, Laboratory of Mathematics and Complex systems,
Ministry of Education, Beijing 100875, People's Republic of China

\medskip

\noindent{\it E-mail addresses}: \texttt{slliu@mail.bnu.edu.cn} (S. Liu)

                                \hspace{2.58cm}\texttt{dcyang@bnu.edu.cn} (D. Yang)

\medskip

\noindent Dongyong Yang

\medskip

\noindent School of Mathematical Sciences,
 Xiamen University, Xiamen 361005, People's Republic of China

\medskip

\noindent{\it E-mail address}: \texttt{dyyang@xmu.edu.cn}
\end{document}